\DeclareMathAlphabet{\mathpzc}{OT1}{pzc}{m}{it}
\newtheorem{thm}{Theorem}[section]
\newtheorem{lem}[thm]{Lemma}
\newtheorem{prop}[thm]{Proposition} 
\newtheorem{cor}[thm]{Corollary}
\newtheorem{rem}[thm]{Remark}
\newtheorem*{Defn}{Definition}
\newcommand{\Z}{\mathbb Z}
\newcommand{\N}{\mathbb \Z_{>0}}
\newcommand{\pZ}{\Z_{\geq 0}}
\newcommand{\td}{\operatorname{tr.deg}}
\newcommand{\ml}{\operatorname{ML}}
\newcommand{\gr}{\operatorname{gr}}
\newcommand{\p}{\mathpzc{p}}
\newcommand{\Ba}{B_{(a_1,a_2,\dots,a_n)}}
\newcommand{\A}{A^{\phi}}
\newcommand{\B}{B^{\phi}}
\newcommand{\D}{D^{\phi}}
\newcommand{\m}{\phi^{(m)}}
\newcommand{\Bh}{B^{\widehat{\phi}}}
\newcommand{\coloneq}{:=}
\title{On rigidity of Pham-Brieskorn surfaces}
\author{Neena Gupta* and Ananya Pal**\\
	{\small{\it Stat-Math  Unit, Indian Statistical Institute,}}\\
	{\small{\it 203 B.T. Road, Kolkata 700 108, India}}\\
	{\small{\it e-mail* : neenag@isical.ac.in, rnanina@gmail.com}}\\
	{\small{\it e-mail** : palananya1995@gmail.com}}
}
\begin{document}
	\date{}
	\maketitle
	
	\abstract
	It is well known that, over an algebraically closed field $k$ of characteristic zero, 
	for any three integers $a,b,c\geq 2$, any Pham-Brieskorn surface $B_{(a,b,c)}:= k[X,Y,Z]/(X^a + Y^b + Z^c)$ is rigid when at most one of $a,b,c$ is 2 and stably rigid when $\frac{1}{a} + \frac{1}{b} + \frac{1}{c}\leq 1$. In this paper we consider Pham-Brieskorn domains over an arbitrary field $k$ of characteristic $p\geq 0$ and give sufficient conditions on $(a,b,c)$ for which any Pham-Brieskorn domain $B_{(a,b,c)}$ is rigid.
	This gives an alternative approach to showing that there does not exist any non-trivial exponential map on $k[X,Y,Z,T]/(X^mY+T^{p^rq} + Z^{p^e})= k[x,y,z,t]$, for $m,q>1$, $p\nmid mq$ and $e>r\geq 1$, fixing $y$, a crucial result used in \cite{G1} to show that the Zariski Cancellation Problem (ZCP) does not hold for the affine $3$-space.
	
	We also provide a sufficient condition for $B_{(a,b,c)}$ to be stably rigid. 
	Along the way we prove that for integers $a,b,c\geq 2$ with $gcd(a,b,c) = 1$ and for $F(Y)\in k[Y]$, the ring $k[X,Y,Z]/(X^aY^b + Z^c+ F(Y))$ is a rigid domain.
	\smallskip
	
	\noindent
	{\small {{\bf Keywords.}} Pham-Brieskorn surface, Exponential map, rigid domain, stably rigid domain, Makar-Limanov invariant.
		
		\noindent
		{\small {{\bf AMS Subject Classification 2020}.}} Primary : 14R20; Secondary :  13A50, 13A02.	
	}
	
	\section{\textbf{Introduction}}
	
	Throughout the paper $k$ will denote a field of arbitrary characteristic (unless mentioned otherwise specifically) and for any field $F$, $\overline{F}$ will denote its algebraic closure. All rings considered in this paper are commutative with unity and any ring homomorphism fixes unity. For a subring $A$ of $B$, we write $B = A^{[n]}$ to mean that $B$ is isomorphic to the polynomial algebra in $n$ variables over $A$. Capital letters like $X, Y, Z, T, U, V, X_1, \dots,X_n$ etc., will designate indeterminates over respective ground rings or fields. $ \Z, \pZ,\N$ stand for their usual meanings of all integers, all non-negative integers and all positive integers respectively.

	A $k$-algebra $A$ with no non-trivial exponential map (for definition of exponential map, see section \ref{exp}) is called {\it rigid}. Otherwise it is called a non-rigid ring.
	A $k$-algebra $A$ is said to be {\it stably rigid} if for every integer $N\geq 0$ and every exponential map $\phi$ on $A[X_1,X_2,.....,X_N] (= A^{[N]})$, $A\subseteq A[X_1,X_2,....,X_N]^{\phi}.$
	Note that stable rigidity implies rigidity.
	
	In \cite{G1}, the first author has shown that, over a field $k$ of positive characteristic $p$,  the Asanuma threefolds of the form 
	$$
	\dfrac{k[X,Y,Z,T]}{(X^mY + Z^{p^e} + T + T^{sp})}, 
	\text{~where~} m,e,s\in \N \text{~with~}  p^e\nmid sp,\, sp\nmid p^e \text{~and~} m>1,
	$$ 
	are counterexamples to the Zariski Cancellation Conjecture. 
	A crucial step in the proof was to show that on any ring of the form  
	$$
	B:=\dfrac{k[X,Y,Z,T]}{(X^mY + T^{sp^r} + Z^{p^e})},
	\text{~where~} m,s,r,e\in \N \text{~with~} p^e \nmid sp^r,\, sp^r\nmid p^e \text{~and~} m >1,
	$$ 
	there does not exist any non-trivial exponential map fixing $y$ (the image of $Y$ in $B$) and hence the $k(y)$-algebra
	$$
	B \otimes_{k[y]} k(y):=\frac{k(y)[X,Z,T]}{(y X^m + T^{sp^r} + Z^{p^e})}; \text{~where~} m,s,r,e\in\N \text{~with~} p^e \nmid sp^r,\, sp^r\nmid p^e\text{~and~} m > 1$$
	is rigid.
	
	In view of the importance of the rigidity of such surfaces, we undertake classification of  rings of the form $k[X,Y,Z]/(X^a + Y^b + Z^c)$ for $(a,b,c)\in \N^3$, in terms of rigidity.
	
	\medskip
	
	With the help of Eisenstein's criterion for irreducibility, one can easily conclude that, over a field $k$ of characteristic $p\geq 0$, for $n\geq 3$ and $a_1,a_2,\dots ,a_n \in \N$, the polynomial $X_1^{a_1} + X_2^{a_2}+ \dots+ X_n^{a_n} \in k[X_1,X_2,\dots ,X_n]\, (= k^{[n]}$) is irreducible if and only if $p\nmid gcd(a_1,a_2,\dots ,a_n)$. For $n\geq 3$ and $n$-tuple, $\underline{a}:=(a_1,a_2,\dots ,a_n)\in \N^n$, let 
	\begin{center}
		$B_{\underline{a}} = \Ba\coloneq\dfrac {k[X_1,X_2,\dots ,X_n]}{(X_1^{a_1} + X_2^{a_2}+ \dots+X_n^{a_n})}$.
	\end{center} 
	
	\noindent
	When $k$ is a field of characteristic $0$, for $n\geq 3$ and an arbitrary $n$-tuple ($a_1,a_2,\dots ,a_n)\in \N^n$, the integral domains of the form $\Ba$ are known as Pham-Brieskorn rings.
	We shall call the rings of the form $\Ba$ Pham-Brieskorn rings (or domains) even over a field $k$ of arbitrary characteristic $p$.
	These rings and the corresponding varieties have been studied extensively from several angles in the characteristic zero setup (\cite[Section 9.2]{Fr}, \cite{FL}, \cite{CD}) and, as the results in \cite{G1} suggest, it now needs to be settled which Pham-Brieskorn domains are rigid or stably rigid in arbitrary characteristic.
	\medskip
	
	\noindent
	For a field $k$ of characteristic $p\geq 0$ and for any $n\geq 3$, we consider the sets
	\begin{flushleft}
		$F_n\coloneq\{(a_1,a_2,\dots ,a_n)\in \N^n \mid p\nmid gcd(a_1,a_2,\dots, a_n)\}$,\\
		$T_n:= \{(a_1,a_2,\dots ,a_n)\in \N^n\mid a_i = 1 \text{~for some~} i, \text{~or~} \exists\,  i,j\in \{1,2,\dots, n\}, \, i\neq j,\,  a_i = a_j =2\}$ \\
		and\\
		$R_n\coloneq\{(a_1,a_2,\dots ,a_n)\in \N^n\mid a_i = 1 \text{~for some~} i, \text{~or~}\exists\, i,j\in \{1,2,\dots, n\},\, i\neq j,\, a_i=p^r, a_j= sp^e, r,s,e\in\N ,\,r\leq e\}$.
	\end{flushleft} 
	By \cite[section 9.2]{Fr} and \cite[Theorem 7.1]{FL}, it is known that for an algebraically closed field of characteristic $0$, $B_{(a,b,c)}$ is rigid if and only if $(a,b,c)\not\in T_3$. In this paper we will establish the following results on Pham-Brieskorn rings.\\
	
	\noindent
	\textbf{Theorem A.} (Result on stable rigidity, Section \ref{SR})
	
	\noindent
	Over any field $k$ of arbitrary characteristic $p\geq 0$, the Pham-Brieskorn domain $B_{(a,s_2p^r,s_3p^e)}$ is stably rigid when $a,s_2,s_3\in\N$ with $\frac{1}{a} + \frac{1}{s_2} + \frac{1}{s_3}\leq 1,\, r,e\in\pZ$ and $p\nmid as_2s_3$ (\thref{Stably rigid 1}).
	
	\hspace{0.5cm}
	
	\noindent
	\textbf{Theorem B.} (Results on rigidity, Section \ref{ThmB})\label{B}
	
	\noindent
	For any field $k$ of characteristic $p\geq 0$ and for an arbitrary integer $n\geq 3$,	
	\begin{enumerate}[\rm(i)]
		
		\item $\Ba$ is non-rigid if $(a_1, a_2,\dots ,a_n)\in R_n$ (subsection \ref{(I)}).
		
		\item $\Ba$ is non-rigid when $(a_1,a_2,\dots,a_n)\in T_n$ and $k$ contains a square root of $-1$ (subsection \ref{(I)}).
		
		\item 
		$ \Ba$ is non-rigid when $(a_1,a_2,\dots,a_n)\in \N^n\setminus F_n$ (subsection \ref{(I)}).
		
		\item 
		$ B_{(a,b,c)}$ is rigid when $(a,b,c)\in F_3\setminus (R_3\cup T_3\cup S_3),\text{where~} S_3 :=\{(2,2m,2p^e)\mid m, e \in \N, \, m>1 \text{~and~} p\nmid 2m \}$ (\thref{main}).
	\end{enumerate}
	In section \ref{AS}, we prove the rigidity of surfaces of type $k[X,Y,Z]/(X^a + Y^bZ^c +F(Y))$ for $a,b,c\geq 2$, $gcd(a,b,c)=1$ and $F(Y)\in k[Y]$ (\thref{Cor}). In section \ref{app}, we present a few applications of our results.
	
	\section{\textbf{Preliminaries}}
	
	For a ring $A$, $A^*$ denotes its group of units. If $A\subseteq B$ are domains then $\td_A(B)$ denotes the transcendence degree of Frac($B$) over Frac($A$), where Frac($A$) is the fraction field of $A$. 
	
	\subsection{Exponential map}\label{exp}
	
	We first recall the concept of an exponential map on a $k$-algebra.
	
	\smallskip
	
	\noindent
	\begin{Defn}
		Let $A$ be a $k$-algebra and let $\phi_U: A \rightarrow A[U]$ be a $k$-algebra homomorphism. We say that $\phi = \phi_U$ is an exponential map on $A$, if $\phi$ satisfies the following two properties:
		\begin{enumerate}[\rm(i)]
			\item $\varepsilon_0\phi_U$ is identity on $A$, where $\varepsilon_0: A[U]\rightarrow A$ is the evaluation map at $U = 0$.
			
			\item $\phi_V\phi_U = \phi_{V+U}$; where $\phi_V: A\rightarrow A[V]$ is extended to a homomorphism $\phi_V:A[U]\rightarrow A[U,V]$ by defining $\phi_V(U) = U$.	
		\end{enumerate}		
	\end{Defn}
	
	The above definition can also be rewritten in terms of the following five properties.
	
	\medskip
	
	\noindent 
	For each $a\in A$, we write $\phi(a) = \sum_{i=0}^{\infty}\phi^{(i)}(a)U^i$ in $A[U]$.
	\begin{enumerate}[\rm(i)]
		\item The sequence of maps $\{ \phi^{(i)}\}_{i=0}^{\infty}$ is a sequence of linear maps on $A$.
		
		\item For each $a\in A$, the sequence $\{ \phi^{(i)}(a)\}_{i=0}^{\infty}$ has only finitely many non-zero terms.
		
		\item $\phi^{(0)}$ is the identity map on $A$.
		
		\item (Leibniz Rule) For all integers $n\geq 0$ and for $a,b\in A$, $\phi^{(n)}(ab) =\sum\limits_{i+j=n}\phi^{(i)}(a)\phi^{(j)}(b)$.
		
		\item For all non negative integers $i$ and $j$, $\phi^{(i)}\phi^{(j)}= \binom{i+j}{i}\phi^{(i+j)}$.
	\end{enumerate}
	
	\noindent	
	Given an exponential map $\phi =\phi_U$ on $A$, we can define the $\phi$-degree of an element $a \in A\setminus \{ 0 \}$, by $\deg_{\phi}(a)= \deg_U(\phi(a))$ and $\deg_{\phi}(0) = -\infty$. The ring of $\phi$-invariants of $\phi_U$ is a subring of $A$ given by 
	\begin{center}
		$\A = \left\{ a\in A\,|\, \phi(a) = a\right\}=
		\{a \in A\mid \deg_{\phi}(a) \leq 0\} = \{ a\in A\mid \deg_U(\phi(a)) = 0\}\cup \{0\}$.
	\end{center}
	Moreover, if $A$ is an integral domain then the function $\deg_{\phi}$ is a degree function (for definition please refer to subsection \ref{II}) on $A$.
	
	An exponential map $\phi$ is said to be non-trivial if $\A \ne A$. For an affine $k$-algebra $A$, let EXP(A) denote the set of all exponential maps on $A$.
	The 
	{\it Makar-Limanov invariant} of $A$ is a subring of $A$ defined by
	\begin{center}
		$\ml(A) = \bigcap\limits_{\phi \in \text{EXP}(A)}\A$.
	\end{center} 
	Let $\ml_0(B) = B$ and for $n\in \N$, $\ml_n(B):= \ml(\ml_{n-1}B).$ Then the {\it rigid core} of $B$, denoted by $\mathcal{R}(B)$, is defined as 
	$$\mathcal{R}(B) = \bigcap\limits_{n\geq 1}\ml_n(B).$$
	
	Recall a $k$-algebra $A$ is said to be rigid if it has no non-trivial exponential map and stably rigid if $A\subseteq \ml(A^{[N]})$, for every integer $N\geq 0$.
	
	We list below some useful properties of exponential maps (cf. \cite{Cr} and \cite{G1}).
	
	\begin{lem}\thlabel{lemma : properties}
		Let $A$ be an affine domain over $k$, and let $\phi$ be a non-trivial exponential map on $A$. Then the following holds: 	\begin{enumerate}[\rm(i)]			
			\item $\A$ is factorially closed in $A$ i.e., for any $a,b\in A\setminus\{0\}$, if $ab\in \A$ then $a,b\in\A$. Hence, $\A$ is also algebraically closed in $A$.
			
			\item $\td_k(\A)$ = $\td_k(A)-1$.
			
			\item $\deg_{\phi}(\phi^{(i)}(a)) \leq \deg_{\phi}(a)-i$, for all $a\in A$, $i\in \N$. If $a\neq 0$ then $\phi^{(\deg_{\phi}a)}(a) \in \A$.
			
			\item Suppose $x\in A$ have the minimal positive $\phi$-degree n and $c:= \phi^{(n)}(x)$. Then  $c\in\A$ and $A[c^{-1}] = \A[c^{-1}][x] = \A[c^{-1}]^{[1]}$.
			
			\item If $\td_k(A) = 1$, then $A= \widetilde{k}^{[1]}$, where $\widetilde{k}$ is the algebraic closure of $k$ in $A$ and $\A = \widetilde{k}$.
			
			\item Let $S$ be a multiplicative closed subset of $\A\setminus \{0\}$. Then $\phi$ extends to a non-trivial exponential map $S^{-1}\phi$ on $S^{-1}A$ defined by $S^{-1}\phi(a/s) = \phi(a)/s$, for all $a\in A$ and $s\in S$. Moreover, the ring of invariants of $S^{-1}\phi$ is $S^{-1}(\A)$.	
		\end{enumerate}	
	\end{lem}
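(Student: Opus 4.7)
The plan is to build everything on the observation that $\deg_\phi$ is a degree function on the integral domain $A$: subadditivity under addition is immediate, and multiplicativity $\deg_\phi(ab)=\deg_\phi(a)+\deg_\phi(b)$ follows because $\phi$ is a ring homomorphism into the polynomial ring $A[U]$ over the domain $A$, so $\deg_U$ is multiplicative on the image. Given this, factorial closure in (i) is immediate: if $ab\in\A\setminus\{0\}$ then $\deg_\phi(a)+\deg_\phi(b)=0$ forces both to vanish. Algebraic closure is then deduced from a minimal monic relation $a^n+c_{n-1}a^{n-1}+\cdots+c_0=0$: factorial closure applied to $a(a^{n-1}+\cdots+c_1)=-c_0\in\A$ (assuming $c_0\neq 0$, else lower the degree using that $A$ is a domain) places $a$ in $\A$.

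For (iii), I would first derive the identity $\phi^{(j)}\phi^{(i)}=\binom{i+j}{i}\phi^{(i+j)}$ from $\phi_V\phi_U=\phi_{V+U}$ by comparing coefficients of $U^iV^j$, and then expand
\[
\phi(\phi^{(i)}(a)) \;=\; \sum_{j\geq 0}\phi^{(j)}(\phi^{(i)}(a))\,U^j \;=\; \sum_{n\geq i}\binom{n}{i}\phi^{(n)}(a)\,U^{n-i},
\]
which yields $\deg_\phi(\phi^{(i)}(a))\leq\deg_\phi(a)-i$; setting $i=\deg_\phi(a)$ shows that $\phi^{(i)}(a)$ is a constant in $U$, hence lies in $\A$.

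The technical core is the local slice statement (iv). With $x$ of minimal positive $\phi$-degree $n$ and $c=\phi^{(n)}(x)$, part (iii) gives $c\in\A$. The argument proceeds by induction on $\phi$-degree: for arbitrary $a\in A$, the leading $\phi$-coefficients of $a^n$ and of $x^{\deg_\phi a}$ both sit in $\A$, and subtracting a suitable $\A[c^{-1}]$-multiple of a monomial in $x$ strictly reduces degree; minimality of $n$ together with factorial closure forces $n\mid\deg_\phi(a)$ on $A[c^{-1}]$, which is what makes such cancellation possible. Algebraic independence of $x$ over $\A[c^{-1}]$ follows by applying $\phi$ to a hypothetical minimal polynomial relation in $x$ and examining leading $U$-terms.

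Parts (ii), (v) and (vi) then fall out quickly. Part (ii) is because $A[c^{-1}]=\A[c^{-1}]^{[1]}$ has transcendence degree one over $\A[c^{-1}]$. For (v), transcendence degree $1$ combined with (ii) forces $\td_k\A=0$, so by (i), $\A=\widetilde{k}$; then $c\in\widetilde{k}^\ast$ is automatically a unit and (iv) gives $A=\widetilde{k}^{[1]}$. For (vi), one directly checks the two defining axioms for $S^{-1}\phi$ and observes that an element $a/s$ with $s\in S\subseteq\A$ is fixed by $S^{-1}\phi$ if and only if $a\in\A$. The main obstacle I anticipate is the clean treatment of the divisibility $n\mid\deg_\phi$ on $A[c^{-1}]$ in arbitrary characteristic, which is delicate when $p>0$ since the usual derivation-based argument is unavailable; here I would follow \cite{Cr}.
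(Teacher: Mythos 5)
The paper never proves this lemma: it is presented as a list of known properties with the citation ``(cf.\ \cite{Cr} and \cite{G1})'', so there is no in-paper argument to compare yours against, and your outline follows the same standard route as those sources. Parts (i), (iii), (v) and (vi) are correct and essentially complete as sketched: multiplicativity of $\deg_U$ on the domain $A[U]$ makes $\deg_{\phi}$ a degree function (a fact the paper itself records in Subsection 2.1), which gives factorial and hence algebraic closure; the composition identity $\phi^{(j)}\phi^{(i)}=\binom{i+j}{i}\phi^{(i+j)}$ is exactly property (v) of the paper's reformulated definition, and your expansion of $\phi(\phi^{(i)}(a))$ correctly yields (iii), including $\phi^{(\deg_{\phi}a)}(a)\in\A$ because $\binom{d}{d}=1$ in every characteristic. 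Parts (ii) and (v) do reduce to (iv) exactly as you say, and the verification in (vi) is routine (well-definedness uses $\phi(s)=s$ for $s\in S$, and non-triviality of $S^{-1}\phi$ uses factorial closure).

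The only substantive issue is the one you flag yourself, in (iv). The degree-lowering step ``subtract a suitable $\A[c^{-1}]$-multiple of a monomial in $x$'' requires the two competing leading $\phi$-coefficients to occur in the same $U$-degree, i.e.\ it presupposes $n\mid\deg_{\phi}(a)$ for every $a\in A$; and your justification of that divisibility --- ``minimality of $n$ together with factorial closure'' --- is not an argument. Minimality of $n$ only excludes elements of $A$ of $\phi$-degree strictly between $0$ and $n$, while the obvious B\'ezout manipulation produces an element of $\operatorname{Frac}(A)$ of degree $\gcd(n,\deg_{\phi}(a))$, not an element of $A$, so no contradiction follows directly; one really has to run the induction of \cite{Cr} (or an associated-graded argument), and this is the genuine technical content of the lemma, particularly in characteristic $p>0$. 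Since you defer precisely this point to \cite{Cr}, which is what the paper does for the entire lemma, your write-up is no less complete than the paper's treatment --- but as a self-contained proof, (iv) is not closed.
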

	
	\begin{rem}\thlabel{Remark 2}
		{\em\begin{enumerate}[\rm(i)]
				\item 	Whenever for a $k$-domain $A$, $A\otimes_k\overline{k}$ is a rigid ring then $A$ is also rigid, since an exponential map $\phi$ on $A$ can be extended to an exponential map ${\phi}\otimes id$ on $A\otimes_k\overline{k}$.
				
				\item Let $A$ be a $k$-algebra such that  $A\otimes_k\overline{k}$ is an integral domain and $\td_kA = 1$. Then $k$ is algebraically closed in $A$ and hence $A$ admits a non-trivial exponential map if and only if $A = k^{[1]}$.
			\end{enumerate}
		}
	\end{rem}
	
	\subsection{Homogenization of an exponential map}\label{II}
	
	We first define a homogeneous exponential map over a $\Z$-graded $k$-algebra.
	
	\smallskip
	
	\noindent
	\begin{Defn}
		Let $\phi$ be an exponential map on a  $\Z$-graded $k$-algebra $A = \bigoplus_{n\in \Z} A_n$ and for any $x\in A$, let $\phi(x) := \sum_{i=0}^{\infty}\phi^{(i)}(x)T^i$ in $A[T]$. Then $\phi$ is said to be a homogeneous exponential map if there exists some $d\in \mathbb{Q}$ such that for every homogeneous element $a$ of degree $r$, $\phi^{(i)}(a)$ is a homogeneous element of degree $r + id$ i.e. $\phi^{(i)}(A_r) \subseteq A_{r+id}$, for all $i\in \pZ$ and $r\in \Z$.	
	\end{Defn}
	
	For this subsection $A$ denotes an affine $k$-domain. We define below a proper and admissible $\Z$-filtration on $A$.
	
	\smallskip
	
	\noindent
	\begin{Defn}
		A collection $\{ A_n\mid n\in \Z \}$ of $k$-linear subspaces of $A$ is said to be a proper $\Z$-filtration if the following properties hold:
		\begin{enumerate}[\rm(i)]
			\item $A_n\subseteq A_{n+1}$ for every $n\in \Z$.
			
			\item $A = \cup_{n}A_n$.
			
			\item $\cap_n A_n = \{ 0 \}$.
			
			\item $(A_n \setminus A_{n-1})(A_m \setminus A_{m-1})\subseteq A_{m+n}\setminus A _{m+n -1}$ for all $m,n\in \Z$.
		\end{enumerate}	
	\end{Defn}
	
	Next we define a related concept of degree function on $A$.
	
	\smallskip
	
	\noindent
	\begin{Defn}
		A degree function on $A$ is a function $\deg : A\rightarrow\Z\cup \{-\infty\}$ satisfying:
		\begin{enumerate}[\rm(i)]
			\item $\deg(a)=-\infty$ if and only if $a=0$.
			
			\item $\deg(ab) = \deg(a)+ \deg(b)$. 
			
			\item $\deg(a+b)\leq max\{\deg(a), \deg(b)\}$ for all $a,b\in A$.
		\end{enumerate}
	\end{Defn}
	
	\begin{rem}\thlabel{Remark 3}
		{\em
			Any proper $\Z$-filtration $\{ A_n\}_{n\in\Z}$ on $A$ defines a degree function $\deg$ on $A$, where $\deg(0)=-\infty$ and $\deg(a)=\min\{d\in \Z\mid a\in A_d\}$ for $a\in A\setminus\{0\}$.
			Conversely any degree function, $\deg$ on $A$ determines a proper filtration on $A$ given by $A_n=\{ a\in A\mid \deg(a)\leq n\}$, for all $n\in \Z.$
		}
	\end{rem} 
	
	\noindent
	Any proper $\Z$-filtration on $A$ determines an  associated $\Z$-graded integral domain
	\begin{center}
		$\gr(A):= \bigoplus\limits_{i\in \Z} \frac{A_i}{A_{i-1}}$.
	\end{center}
	There exists a natural map $\rho: A \rightarrow \gr(A)$ defined by $\rho(a) = a + A_{n-1}$ if $a\in A_n\setminus A_{n-1}$.
	\begin{Defn}
		A proper $\Z$-filtration $\{A_n\}_{n\in \Z}$ on $A$ is said to be admissible if there exists a finite generating set $\Gamma$ of $A$ such that, for any $n\in\Z$ and $a\in A_n$, $a$ can be written as a finite sum of monomials in elements of $\Gamma$ and each of them is a monomial in $A_n$ also. 
	\end{Defn}
	\begin{rem}\thlabel{Remark 4}
		{\em 
			Suppose that $A$ has a proper $\Z$-filtration and a finite generating set $\Gamma$ which makes the filtration admissible. Then $\gr(A)$ is generated by $\rho(\Gamma)$ \cite[Remark 2.2]{G1}. 
		}
	\end{rem}
	
	\begin{rem}\thlabel{Remark 1}
		{\em 
			Let $A$ be a $\Z$-graded algebra say, $A = \bigoplus_{i\in\Z} C_i$. Then there exists a proper $\Z$-filtration $\{ A_n\}_{n\in\Z}$ on $A$ defined by $A_n\:= \bigoplus_{i\leq n}C_i$ and $\gr(A) = \bigoplus_{n\in \Z}A_n/A_{n-1}\cong \bigoplus_{n\in\Z}C_n = A$. Moreover, for any element $a\in A$, $\rho(a)$  is the highest degree homogeneous summand of $a$ in $A$. 
			Thus the above filtration on $A$ is admissible.
		}
	\end{rem}
	
	Next we record a theorem on homogenization of exponential maps by H. Derksen, O. Hadas and L. Makar-Limanov (\cite[Theorem 2.6]{Cr}).
	
	\begin{thm}\thlabel{Theorem: homogeneization}
		Let $A$ be an affine domain over a field $k$ with an admissible $\mathbb{Z}$-filtration. Suppose $\phi$ is a non-trivial exponential map on $A$. Then $\phi$ induces a non-trivial homogeneous exponential map $\widehat{\phi}$ on the associated graded domain $\gr(A)$ such that $\rho(A^{\phi}) \subseteq (\gr(A))^{\widehat{\phi}}$, where $ \rho$ is the natural map $A \rightarrow \gr(A)$.	
	\end{thm}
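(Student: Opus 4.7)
The plan is to construct $\widehat{\phi}$ by extending the admissible filtration $\{A_n\}$ on $A$ to a suitable filtration on $A[U]$, declaring $U$ to have a well-chosen weight $d \in \mathbb{Q}$, and then passing to associated graded rings. Let $\Gamma$ be a finite generating set of $A$ witnessing admissibility. For each $g \in \Gamma$ and each $i \geq 1$ with $\phi^{(i)}(g) \neq 0$, the quantity $(\deg \phi^{(i)}(g) - \deg g)/i$ is a well-defined rational number, and by \thref{lemma : properties}(ii)-(iii), only finitely many $i$ contribute for each $g$. Since $\Gamma$ is finite, I can set
\[
d := \max_{g \in \Gamma,\, i \geq 1} \frac{\deg \phi^{(i)}(g) - \deg g}{i},
\]
which (after possibly rescaling to clear a denominator) gives the homogeneity shift.

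Next I would extend the filtration to $A[U]$ by assigning $U$ the weight $d$, i.e.\ declaring $\deg(aU^i) := \deg(a) + id$ for $a \in A$, and taking $A[U]_m := \sum_i \{aU^i : \deg(a) + id \leq m\}$. By the choice of $d$, for each generator $g \in \Gamma$ we have $\phi(g) \in A[U]_{\deg g}$. Admissibility of the filtration then propagates this to all of $A$: any $a \in A_n$ expands as a sum of monomials in $\Gamma$, each already in $A_n$, and Leibniz together with linearity gives $\phi(a) \in A[U]_n$. So $\phi: A \to A[U]$ is a filtered $k$-algebra homomorphism. Taking associated graded rings, and using that the filtration on $A[U]$ corresponds to the grading on $\gr(A)[U]$ (with $U$ homogeneous of degree $d$), one obtains a $k$-algebra homomorphism
\[
\widehat{\phi}: \gr(A) \longrightarrow \gr(A)[U].
\]
Concretely, for a nonzero homogeneous $\rho(a) \in \gr(A)$ of degree $n$, $\widehat{\phi}(\rho(a))$ is the degree-$n$ component of the image of $\phi(a)$ in $\gr(A[U])$; this is built from those $\phi^{(i)}(a)U^i$ for which $\deg \phi^{(i)}(a) + id = n$, and these terms are homogeneous in $\gr(A)$ of the required degrees.

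The exponential axioms for $\widehat{\phi}$ I would check by taking leading (degree $n$) parts in the identities $\varepsilon_0 \phi = \mathrm{id}_A$ and $\phi_V \phi_U = \phi_{U+V}$: both identities are compatible with the filtration, so their leading components give the analogous identities for $\widehat{\phi}$. Non-triviality comes precisely from the maximality in the definition of $d$: some $g_0 \in \Gamma$ and some $i_0 \geq 1$ realize the maximum, so $\phi^{(i_0)}(g_0) U^{i_0}$ contributes a nonzero term to $\widehat{\phi}(\rho(g_0))$ of positive $U$-degree, forcing $\widehat{\phi} \neq \mathrm{id}$. Finally, if $a \in A^\phi$ then $\phi(a) = a$, so the only nonzero term of $\phi(a)$ is $a$ itself in degree $\deg a$, and hence its image under $\rho$ satisfies $\widehat{\phi}(\rho(a)) = \rho(a)$; this gives $\rho(A^\phi) \subseteq \gr(A)^{\widehat{\phi}}$.

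The main technical obstacle is choosing $d$ correctly and ensuring that the resulting filtration on $A[U]$ is actually compatible with $\phi$ on \emph{all} of $A$ rather than just on $\Gamma$; this is exactly where admissibility is essential, because the degree of a general element of $A_n$ must be bounded by $n$ when expanded in monomials in $\Gamma$, allowing the Leibniz rule to transfer the bound $\phi(g) \in A[U]_{\deg g}$ from generators to arbitrary elements. A secondary subtlety is that $d$ may be rational rather than integral, which is harmless after rescaling all degrees by a common denominator but should be acknowledged when interpreting $\gr(A)$ as a $\mathbb{Z}$-graded object.
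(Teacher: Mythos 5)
The paper itself does not prove this statement: it records it as a known theorem of Derksen, Hadas and Makar-Limanov, citing \cite[Theorem 2.6]{Cr}. Your proposal reconstructs the standard proof from that source, and the overall architecture is the right one: choose a weight for $U$ from the finitely many ratios $(\deg \phi^{(i)}(g)-\deg g)/i$ taken over the admissible generating set $\Gamma$, extend the filtration to $A[U]$, use admissibility together with multiplicativity of the filtration to propagate the degree bound from $\Gamma$ to all of $A$, pass to associated graded rings, and read off non-triviality from an extremal pair $(g_0,i_0)$.

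There is, however, a sign error that as written breaks the central step. You set $d:=\max_{g,i}\,(\deg \phi^{(i)}(g)-\deg g)/i$, assign $U$ the weight $d$ so that $\deg(aU^i)=\deg(a)+id$, and claim $\phi(g)\in A[U]_{\deg g}$. That membership requires $\deg\phi^{(i)}(g)+id\leq \deg g$ for every $i$, i.e. $d\leq (\deg g-\deg\phi^{(i)}(g))/i$ for all $g$ and $i$; so the weight of $U$ must be $\min_{g,i}(\deg g-\deg\phi^{(i)}(g))/i$, which is the \emph{negative} of your $d$. With your convention the inequality goes the wrong way: on $k[x,y]$ with the standard grading and $\phi(x)=x$, $\phi(y)=y+x^2U$, your $d$ equals $1$ and $\deg(x^2U)=3>1=\deg y$, so $\phi(y)\notin A[U]_{\deg y}$. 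The quantity you computed is in fact the homogeneity shift $d$ in the paper's definition of a homogeneous exponential map (the one with $\phi^{(i)}(A_r)\subseteq A_{r+id}$); the indeterminate $U$ must then carry weight $-d$. Once this is corrected the argument goes through, though you should also justify why passing to leading forms is multiplicative --- this uses condition (iv) of a proper filtration, equivalently that $\gr(A)$ is a domain --- since that is what makes $\widehat{\phi}$ a well-defined ring homomorphism on $\gr(A)$. Your verification of the exponential axioms, of non-triviality via the extremal pair, and of $\rho(A^{\phi})\subseteq (\gr(A))^{\widehat{\phi}}$ is otherwise sound.
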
	
	
	\subsection{Some known results}
	
	We now recall some well known results which will be used throughout the text.	
	First we state a formulation of the Epimorphism Theorem due to Abhyankar and Moh \cite[Theorem 1.1]{AM}.
	
	\begin{thm}\thlabel{EpiTheorem}
		Let $\varphi: k[X,Y]\rightarrow k[T]$ be a $k$-algebra epimorphism. Suppose $\deg_T(\varphi(X)) = n\geq 1$ and $\deg_T(\varphi(Y)) = m\geq 1$ with ch$(k)\nmid gcd(m,n)$. Then either $m\mid n$ or $n\mid m$.
	\end{thm}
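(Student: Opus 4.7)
The plan is to follow the classical Abhyankar--Moh approach via analysis at infinity. Without loss of generality assume $n\leq m$; we aim to show $n\mid m$. Since $\varphi$ is surjective, $k[T]=k[f,g]$ where $f:=\varphi(X)$ and $g:=\varphi(Y)$, so $[k(T):k(f)]=n$ and $g$ is integral over the normal domain $k[f]$; hence $g$ satisfies a monic minimal polynomial $G(X,Y)\in k[X][Y]$ of $Y$-degree exactly $n$.

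The key device is the weighted filtration on $k[X,Y]$ assigning $X$ weight $n$ and $Y$ weight $m$; under the substitution $(X,Y)\mapsto(f,g)$ this matches the $T$-degree filtration on $k[T]$. The identity $G(f,g)=0$ forces the top weighted-homogeneous component $\widehat{G}$ of $G$ to vanish on $(f,g)$ in the top $T$-degree. Enumerating the monomials $X^aY^b$ with $0\leq b\leq n$ attaining a common weight shows, with $d:=\gcd(m,n)$, that $\widehat{G}$ actually lies in $k[X^{m/d},Y^{n/d}]$ and is homogeneous of total degree $d$ there. Setting $\mu:=c_g^{n/d}/c_f^{m/d}$ for the leading $T$-coefficients $c_f,c_g$, the vanishing of the top $T$-coefficient of $\widehat{G}(f,g)$ becomes a degree-$d$ polynomial equation in $\mu$ over $k$, and the hypothesis $\mathrm{ch}(k)\nmid d$ is precisely what ensures this equation is separable after normalization, producing a usable root $\mu\in k$.

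Armed with this root, one forms $g_1:=g^{n/d}-\mu f^{m/d}\in k[T]$, which has strictly smaller $T$-degree than $g^{n/d}$. The technical heart of the argument is to show, by iterating the weighted analysis on the new generating set and tracking approximate roots, that the pair $(f,g_1)$ (or a suitable modification thereof) still encodes a $k$-algebra epimorphism $k[X,Y]\to k[T]$ with strictly smaller $\min(\deg_Tf,\deg_Tg)$, unless $n\mid m$ already. One then inducts on this minimum, with base case $n=1$ trivial; the characteristic hypothesis is inherited automatically since the new $\gcd$ divides $d$.

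The principal obstacle is the bookkeeping of this descent -- essentially the Abhyankar--Moh approximate-roots machinery -- and in particular verifying that the surjectivity of the epimorphism is preserved through each reduction step. The characteristic hypothesis $\mathrm{ch}(k)\nmid d$ is essential: without it the degree-$d$ equation for $\mu$ may be purely inseparable, the descent collapses, and genuine counterexamples to the divisibility conclusion appear in positive characteristic.
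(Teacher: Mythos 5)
First, note that the paper does not prove this statement at all: it is quoted as the Abhyankar--Moh Epimorphism Theorem directly from \cite{AM}, so there is no internal proof to compare against. Your preliminary reductions are sound and match the opening of the classical argument: since $k[f,g]=k[T]$, the element $g$ is integral over the normal ring $k[f]$ and its monic minimal polynomial $G\in k[X][Y]$ has $Y$-degree $n=[k(T):k(f)]$, and the analysis of monomials $X^aY^b$ of equal weight under $wt(X)=n$, $wt(Y)=m$ does show that the relevant leading form $\widehat{G}$ is homogeneous of degree $d=\gcd(m,n)$ in $X^{m/d}$ and $Y^{n/d}$ (granting the standard but unproved claim that $Y^n$ attains the top weight).

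The genuine gap is that the proposal stops exactly where the theorem becomes hard. The step you yourself flag as ``the technical heart'' --- passing from the root $\mu$ to a new pair with strictly smaller minimal degree while preserving surjectivity --- is not bookkeeping; it is the entire content of the Epimorphism Theorem. In the naive form described it fails: setting $g_1:=g^{n/d}-\mu f^{m/d}$ lowers degree, but $k[f,g_1]\subsetneq k[f,g]$ whenever $n/d>1$, since $g$ cannot be recovered from $g^{n/d}$ inside $k[f,g_1]$; so the induction hypothesis does not apply to $(f,g_1)$, and the ``suitable modification'' that makes the descent work is precisely the approximate-roots/characteristic-sequence machinery of Abhyankar--Moh (or an equivalent Newton--Puiseux analysis at infinity), none of which is supplied. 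The role of the hypothesis $\mathrm{ch}(k)\nmid\gcd(m,n)$ is also subtler than separability of the degree-$d$ equation for $\mu$: in the known characteristic-$p$ counterexamples (e.g.\ $f=T^{p^2}$, $g=T^{p^2+p}+T$) the failure occurs inside the iterated expansion, not at the first leading-form step. As it stands the submission is an outline of a known long proof, with the decisive inductive step asserted rather than established.
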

	
	The next two lemmas are proved in 
	\cite[Lemma 3.3 and 3.4]{CM} respectively.
	
	\begin{lem}{\thlabel{Mini Mason 1}}
		Let $\phi$ be an exponential map on a $k$-domain $A$ with characteristic $p\geq 0$.
		Let $f,g \in A$ be such that there exist $a,b\in \N\setminus \{1\}$ and $c_1,c_2\in \A\setminus \{0\}$ for which
		$c_1f^a + c_2g^b \in \A\setminus \{0\}$ and neither $a$ nor $b$ is a power of $p$. Then $f,g\in \A$.
	\end{lem}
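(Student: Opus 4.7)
The plan is to proceed by contradiction. If $\phi$ is trivial then $\A = A \ni f,g$ and there is nothing to prove, so assume $\phi$ is non-trivial and $\{f,g\} \not\subseteq \A$. If only one of $f,g$ lies in $\A$, say $g \in \A$, then $c_2 g^b \in \A$ and hence $c_1 f^a \in \A$; factorial closedness of $\A$ in $A$ (\thref{lemma : properties}(i)) then forces $f \in \A$, a contradiction. So both $f,g \notin \A$. Localising at $S := \A \setminus \{0\}$ yields a non-trivial exponential map $S^{-1}\phi$ on $B := S^{-1}A$ with invariant ring $K := \mathrm{Frac}(\A)$, and $\td_K B = \td_k A - \td_k \A = 1$ by \thref{lemma : properties}(ii). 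Applying \thref{lemma : properties}(v) over the base field $K$ gives $B = K[x]$ for some indeterminate $x$.

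In $K[x]$ the relation $c_1 f^a + c_2 g^b = h$ holds with $h \in K^\ast$. Since $\A = K \cap A$ (a consequence of factorial closedness), the hypothesis $f,g \notin \A$ translates into $u := \deg_x f \geq 1$ and $v := \deg_x g \geq 1$. Any common factor of $f,g$ in $K[x]$ must divide the unit $h$, so $f$ and $g$ are coprime in $K[x]$, and matching top $x$-degrees forces $au = bv$. To conclude via the Mason--Stothers theorem in characteristic $p$, I must ensure that at least one summand is not a $p$-th power; this is the main technical obstacle. I would pass to the perfect closure $\tilde K$ of $K$ and factor $f = f_1^{p^\alpha}$, $g = g_1^{p^\beta}$ in $\tilde K[x]$ with $f_1', g_1' \neq 0$ (maximal $p$-th power extraction, available because $\tilde K$ is perfect); the resulting $f_1, g_1$ remain non-constant and coprime in $\tilde K[x]$. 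Writing $ap^\alpha = p^e a'$ and $bp^\beta = p^{e'} b'$ with $p \nmid a' b'$ and $a', b' \geq 2$ (still not powers of $p$, since $a,b$ are not), assuming $e \leq e'$, and extracting $p^e$-th roots of $c_1, c_2, h$ in $\tilde K$, the identity $(X+Y)^{p^e} = X^{p^e} + Y^{p^e}$ reduces the equation to
\[
\tilde c_1 f_1^{a'} + \tilde c_2 g_1^{B'} = \tilde h \qquad \text{in } \tilde K[x],
\]
with $\tilde c_1, \tilde c_2, \tilde h \in \tilde K^\ast$, $a' \geq 2$ coprime to $p$, and $B' := p^{e'-e} b' \geq 2$.

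Finally, I would apply the Mason--Stothers theorem in $\tilde K[x]$ to the pairwise coprime triple $(\tilde c_1 f_1^{a'},\, \tilde c_2 g_1^{B'},\, -\tilde h)$: the characteristic-$p$ non-degeneracy hypothesis is satisfied because $(\tilde c_1 f_1^{a'})' = \tilde c_1 a' f_1^{a'-1} f_1' \neq 0$ (using $p \nmid a'$ and $f_1' \neq 0$). The resulting radical inequality gives both
\[
a' \deg f_1 \leq \deg f_1 + \deg g_1 - 1 \quad \text{and} \quad B' \deg g_1 \leq \deg f_1 + \deg g_1 - 1,
\]
and multiplying forces $(a'-1)(B'-1)\deg f_1 \deg g_1 < \deg f_1 \deg g_1$, which is impossible since $a', B' \geq 2$. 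This contradiction yields $f, g \in \A$. The main difficulty throughout is the $p$-th power bookkeeping needed to make Mason--Stothers applicable: both the perfect-closure step (handling inseparability in $f,g$) and the extraction of the common $p^e$-th power from the coefficients are tailored precisely to this end, and they are what make essential use of the assumption that neither $a$ nor $b$ is a pure power of $p$.
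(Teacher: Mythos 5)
The paper does not actually prove this lemma: it is imported from \cite[Lemmas 3.3 and 3.4]{CM}, so there is no in-text argument to compare against. Your proof is correct and follows what is essentially the standard (and, I believe, the cited source's) route: dispose of the case where exactly one of $f,g$ lies in $\A$ by factorial closedness, localize at $\A\setminus\{0\}$ so that \thref{lemma : properties}(v),(vi) put you in $K^{[1]}$ with $K=\operatorname{Frac}(\A)$, note that $f,g$ become coprime non-constant polynomials in one variable satisfying $c_1f^a+c_2g^b=h\in K^*$, and finish with a Mason--Stothers/Wronskian estimate. The delicate part is the characteristic-$p$ bookkeeping, and you handle it correctly: passing to the perfect closure to write $f=f_1^{p^\alpha}$, $g=g_1^{p^\beta}$ with $f_1',g_1'\neq 0$, pulling the common $p^e$-th power out of the whole identity, and observing that the hypothesis that $a$ is not a power of $p$ is precisely what makes the prime-to-$p$ part $a'\geq 2$, so that $(\tilde c_1f_1^{a'})'\neq 0$ and the Wronskian of the coprime triple is nonzero. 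Two points you should make explicit in a final write-up, though neither is a mathematical gap: (1) the lemma is stated for an arbitrary $k$-domain while \thref{lemma : properties}(ii),(v) are stated for affine domains, so one should first replace $A$ by the $\phi$-stable affine subalgebra generated by the finitely many nonzero $\phi^{(i)}(f),\phi^{(i)}(g),\phi^{(i)}(c_1),\phi^{(i)}(c_2)$; (2) the characteristic-$p$ Mason--Stothers theorem is used as a black box, and the precise nondegeneracy hypothesis you need (pairwise coprime summands, at least one with nonvanishing derivative) should be stated with a reference or the short Wronskian proof included.
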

	
	\begin{lem}{\thlabel{Mini Mason 2}}
		Let $\phi$ be an exponential map on a $k$-domain $A$ with characteristic $p> 0$. 
		Let $f,g\in A$ be such that $f$ is prime in $A$ and there exist $a,l\in \N$, $a > 1$, $p\nmid a$ and $c_1,c_2\in \A\setminus \{0\}$
		for which $c_1f^a + c_2g^{p^l} \in \A\setminus \{0\}$. Then $f,g\in \A$.
	\end{lem}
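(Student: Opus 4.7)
I would first reduce to showing $f \in \A$: granted this, $c_1 f^a \in \A$, so $c_2 g^{p^l} = h - c_1 f^a \in \A$; by factorial closedness of $\A$ in $A$ from \thref{lemma : properties}(i), $g^{p^l} \in \A$, and then since $\A$ is algebraically closed in $A$, we obtain $g \in \A$. So suppose for contradiction that $f \notin \A$.

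Next, I would apply $\phi$ to the relation $c_1 f^a + c_2 g^{p^l} = h$, use $\phi(h) = h$ (since $h \in \A$), and exploit two features specific to characteristic $p$ with $p \nmid a$: the Frobenius identity $\phi(g)^{p^l} - g^{p^l} = (\phi(g) - g)^{p^l}$ and the factorization $\phi(f)^a - f^a = (\phi(f) - f)\cdot M(U)$, where $M(U) := \sum_{i=0}^{a-1}\phi(f)^{a-1-i}f^i \in A[U]$ satisfies $M(0) = a f^{a-1} \neq 0$. This yields
\[
c_1(\phi(f) - f)\,M(U) = -c_2\,(\phi(g) - g)^{p^l} \quad \text{in } A[U].
\]
Setting $\nu_f := \min\{i \geq 1 : \phi^{(i)}(f) \neq 0\}$ (finite because $f \notin \A$) and $\nu_g$ analogously, the $U$-adic order of the LHS is $\nu_f$ (since $M(0) \neq 0$), forcing $\nu_g < \infty$ and $\nu_f = p^l \nu_g$. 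Comparing the $U^{\nu_f}$-coefficients then gives the key identity
\[
c_1 a f^{a-1}\,\phi^{(\nu_f)}(f) = -c_2\,\phi^{(\nu_g)}(g)^{p^l}. \qquad (\ast)
\]

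To conclude, I would bring in the primality of $f$. Since $\A$ is factorially closed and $f \notin \A$, $f$ divides no nonzero element of $\A$; in particular $f \nmid c_1$ and $f \nmid c_2$. Equation $(\ast)$ then forces $f^{a-1} \mid \phi^{(\nu_g)}(g)^{p^l}$, and primality of $f$ yields $f^s \mid \phi^{(\nu_g)}(g)$ for $s := \lceil (a-1)/p^l \rceil \geq 1$. Writing $\phi^{(\nu_g)}(g) = f^s g'$ and substituting back into $(\ast)$, one may cancel powers of $f$ to obtain a strictly simpler identity; this should produce either an infinite $f$-adic descent in $\phi^{(\nu_f)}(f)$ (contradicting $\bigcap_n f^n A = \{0\}$ in the affine setting), or a terminal relation that becomes amenable, possibly after passing to $\gr(A)$ via \thref{Theorem: homogeneization}, to \thref{Mini Mason 1}. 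I expect the main obstacle to be making this descent rigorous — simultaneously tracking the $f$-adic valuation and the $\phi$-degree (bounded by \thref{lemma : properties}(iii) as $\deg_{\phi} \phi^{(\nu_f)}(f) \leq \deg_{\phi} f - \nu_f < \deg_{\phi} f$) of the iterated quantities so that the iteration provably terminates in a genuine contradiction.
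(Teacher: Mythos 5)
First, a point of comparison: the paper does not actually prove this lemma --- it is imported from \cite[Lemma 3.4]{CM} --- so there is no in-paper argument to measure you against. Judged on its own terms, your proposal is correct and cleanly executed up to the identity $(\ast)$: the reduction to showing $f\in\A$, the Frobenius factorization, the computation of $U$-adic orders (using $M(0)=af^{a-1}\neq 0$ because $p\nmid a$ and $f\neq 0$), the conclusion $\nu_f=p^l\nu_g$, and the divisibility $f^{a-1}\mid(\phi^{(\nu_g)}(g))^{p^l}$ (using that a prime $f\notin\A$ divides no nonzero element of the factorially closed subring $\A$, so $f\nmid c_1,c_2$) are all valid.

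The gap is the final step, and it is genuine. Writing $\alpha$ and $\beta$ for the $f$-adic valuations of $\phi^{(\nu_f)}(f)$ and $\phi^{(\nu_g)}(g)$, the only $f$-divisibility information carried by $(\ast)$ is the single equation $(a-1)+\alpha=p^l\beta$, which is perfectly consistent; once you cancel powers of $f$ the residual identity forces no further divisibility by $f$, so there is no infinite descent and no contradiction. The fallback you mention is also structurally unavailable: \thref{Mini Mason 1} explicitly requires that \emph{neither} exponent be a power of $p$, whereas your relation has exponent $p^l$ on $g$, so no massaging of $(\ast)$ (nor passage to $\gr(A)$) can bring it within the scope of that lemma. (A minor additional point: the valuation $v_f$ is only well defined when $\bigcap_n f^nA=0$, which is not among the hypotheses for a general $k$-domain.) A route that does close the argument uses \thref{lemma : properties}(iv) and (vi) instead of coefficient extraction: localizing at $S=\A\setminus\{0\}$ gives $S^{-1}A=K[x]=K^{[1]}$ with $K=\operatorname{Frac}(\A)$, in which $f$ remains prime (it cannot become a unit, since it divides no element of $S$) and $g\notin K$; comparing multiplicities of linear factors in $c_1f^a=-c_2(g-\delta)^{p^l}$ over $\overline{K}$, where $\delta^{p^l}=h/c_2$, and using $p\nmid a$, one gets $f=\lambda Q^{p^l}$ and $g=\delta+\mu Q^a$ for a monic $Q\in\overline{K}[x]$; the non-leading coefficients of $\mu Q^a$ lying in $K$ then force, again because $a\in K^*$, all coefficients of $Q$ into $K$, contradicting the irreducibility of $f$ in $K[x]$. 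Your identity $(\ast)$ never exploits that $f$ is irreducible \emph{as a polynomial in} $K[x]$ rather than merely coprime to $c_1,c_2$, and that is exactly the information the proof needs.
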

	
	The next lemma from  \cite[Lemma 3.3]{G1} provides an important criterion for the existence of non-trivial exponential map on certain affine domains.
	
	\begin{lem}\thlabel{betta-lemma}
		Let $A$ be an affine domain over an infinite field $k$. Let $f\in A$ be such that $f-\lambda$ is a prime element of $A$ for infinitely many $\lambda$ in $k$. Let $\phi: A\rightarrow A[U]$ be a non-trivial exponential map on $A$ such that $f\in \A$. Then there exists a $\beta\in k$ such that $f-\beta$ is a prime element of $A$ and $\phi$ induces a non-trivial exponential map on $A/(f-\beta)A$.
	\end{lem}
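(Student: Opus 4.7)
The overall plan is to find, among the infinitely many $\lambda\in k$ with $f-\lambda$ prime, one for which some designated nonzero $\phi$-invariant survives reduction modulo $f-\lambda$; the Noetherian property of the affine domain $A$ will bound the number of ``bad'' $\lambda$.

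First I will observe that for every $\beta\in k$, the element $f-\beta$ lies in $A^{\phi}$, because $f\in A^{\phi}$ and $k\subseteq A^{\phi}$. Hence the principal ideal $(f-\beta)A$ is $\phi$-stable, i.e.\ $\phi((f-\beta)A)\subseteq (f-\beta)A[U]$, so $\phi$ descends to a $k$-algebra homomorphism $\overline{\phi}\colon A/(f-\beta)A\to (A/(f-\beta)A)[U]$. The five exponential-map axioms of Section~\ref{exp} pass verbatim to the quotient, so the construction of the induced map is automatic; the remaining task is only to choose $\beta$ so that $f-\beta$ is prime \emph{and} $\overline{\phi}$ is non-trivial.

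To pin down a witness of non-triviality, I will pick $a_0\in A$ with $n:=\deg_{\phi}(a_0)\geq 1$ and set $c:=\phi^{(n)}(a_0)$; by \thref{lemma : properties}(iii), $c$ is a nonzero element of $A^{\phi}$. If I can arrange $c\notin (f-\beta)A$, then $\overline{\phi}(\overline{a_0})=\overline{a_0}+\overline{\phi^{(1)}(a_0)}\,U+\cdots+\overline{c}\,U^{n}$ has nonzero leading coefficient $\overline{c}$, so $\deg_{U}(\overline{\phi}(\overline{a_0}))=n\geq 1$ and $\overline{\phi}$ is non-trivial.

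Finally I will show that all but finitely many of the permissible $\beta$ satisfy $c\notin (f-\beta)A$. Distinct scalars yield distinct ideals: if $(f-\beta_1)A=(f-\beta_2)A$ with $\beta_1\neq\beta_2$, then $\beta_1-\beta_2$ is a nonzero element of $k$, hence a unit of $A$, lying inside the proper ideal $(f-\beta_1)A$---a contradiction. So the hypothesis provides infinitely many distinct height-$1$ primes of the form $(f-\beta)A$. Since $A$ is an affine (hence Noetherian) domain, $(c)A$ has only finitely many minimal primes, all of height $1$, and a height-$1$ prime that contains $c$ must be one of them. Consequently only finitely many $\beta$ have $c\in(f-\beta)A$, so a suitable $\beta$ exists. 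The one step requiring a little care is this last conversion of ``finitely many minimal primes of $(c)$'' into ``finitely many bad $\beta$'', which relies on a height-$1$ prime being unable to properly contain another nonzero prime; the rest is formal manipulation of $\phi$-stable ideals and the axioms of exponential maps.
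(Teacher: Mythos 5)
Your proof is correct, and it is essentially the standard argument: the paper itself does not prove this lemma but cites it from \cite[Lemma 3.3]{G1}, where the proof likewise reduces to choosing a nonzero invariant leading coefficient $c=\phi^{(n)}(a_0)$ and picking $\beta$ so that the height-one prime $(f-\beta)A$ avoids $c$, which is possible because $c$ lies in only finitely many height-one primes of the Noetherian domain $A$. Your observations that distinct $\beta$ give distinct primes and that $c\notin(f-\beta)A$ forces $\overline{a_0}\neq 0$ are exactly the points that need checking, and you handle them correctly.
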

	
	\begin{rem}\thlabel{Remark 6}
		{\em (cf. \cite[section 1]{FL})
			For an integral domain $A$, two non-zero elements $f,g$ of $A$ are said to be relatively prime in $A$ if $fA\cap gA = fgA$. The following hold when $f,g$ are relatively prime in $A$:
			\begin{enumerate}[\rm(i)]
				\item If $fh_1 = gh_2$ for some $h_1,h_2\in A$ then $g$ divides $h_1$ and $f$ divides $h_2$.
				\item $f^m$ and $g^n$ are relatively prime for all $m,n\in\N$.
				\item $f,g$ are relatively prime in $A^{[n]}$, for all $n\in \N$.
				\item $f,g$ are relatively prime in $A_1$, for any factorially closed subring $A_1$ of $A$ containing $f$ and $g$. 
			\end{enumerate}
		}
	\end{rem}
	
	\section{\textbf{Theorem A: Stable Rigidity of the Pham-Brieskorn domain} }\label{SR} 
	
	The following result on stable rigidity has been proved in \cite{KZ}, \cite[Theorem 6.1(a)]{FL} and \cite[Theorem 9.7]{Fr} over a field of characteristic zero. We have modified the arguments in \cite[Theorem 9.7]{Fr} to give a characteristic-free proof.
	
	\begin{prop}\thlabel{Theorem 1}
		Let $A$ be a $k$-domain of characteristic $p\geq 0$. Suppose $x,y,z\in A\setminus\{0\}$ are pairwise relatively prime in $A$ satisfying $x^a + y^b + z^c = 0$, for some positive integers $a,b,c$ with $p\nmid abc$ and $\frac{1}{a} + \frac{1}{b} + \frac{1}{c} \leq 1$. Then $k[x,y,z]\subseteq \ml(A)$. Moreover, $k[x,y,z]\subseteq \mathcal{R}(A)$.
	\end{prop}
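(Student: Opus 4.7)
The plan is to first prove the main assertion $k[x,y,z] \subseteq \ml(A)$; the ``Moreover'' part then follows by iteration. Since $\ml(A)$ is factorially closed in $A$ as an intersection of the factorially closed subrings $\A$, \thref{Remark 6}(iv) shows that $x, y, z$ remain pairwise relatively prime in $\ml(A)$, so applying the main assertion to $\ml(A)$ gives $k[x,y,z] \subseteq \ml_2(A)$; inductively $k[x,y,z] \subseteq \ml_n(A)$ for every $n$, hence $k[x,y,z] \subseteq \mathcal{R}(A)$.

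For the main assertion, fix a non-trivial exponential map $\phi$ on $A$ and set $\alpha = \deg_\phi(x)$, $\beta = \deg_\phi(y)$, $\gamma = \deg_\phi(z)$; the aim is to show $\alpha = \beta = \gamma = 0$. First I reduce to the case $\alpha, \beta, \gamma > 0$. If two of them, say $\beta, \gamma$, are $\leq 0$, then $y, z \in \A$ and $x^a = -y^b - z^c \in \A$ forces $x \in \A$ by factorial closure (\thref{lemma : properties}(i)). If exactly one, say $\gamma$, is $\leq 0$, then $z \in \A$ and $x^a + y^b = -z^c$ is a nonzero element of $\A$; since $\frac{1}{a}+\frac{1}{b}+\frac{1}{c}\leq 1$ forces $a, b \geq 2$ and $p \nmid ab$ makes neither $a$ nor $b$ a power of $p$, \thref{Mini Mason 1} with $f = x$, $g = y$, $c_1 = c_2 = 1$ yields $x, y \in \A$.

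Assume now $\alpha, \beta, \gamma > 0$. After replacing $A$ by a finitely generated $k$-subdomain containing $x, y, z$ on which $\phi$ still restricts to a non-trivial exponential map and the $\deg_\phi$-filtration is admissible, \thref{Theorem: homogeneization} produces a non-trivial homogeneous exponential map $\widehat\phi$ on the associated graded domain $\gr(A)$. Since the three summands of $x^a + y^b + z^c = 0$ cannot have pairwise distinct $\phi$-degrees, at least two of $a\alpha, b\beta, c\gamma$ coincide with the maximum $d$; taking leading forms yields a homogeneous identity in $\gr(A)$, either the three-term relation $\rho(x)^a + \rho(y)^b + \rho(z)^c = 0$ (if $a\alpha = b\beta = c\gamma$) or a two-term relation $\rho(u)^m + \rho(v)^n = 0$ for some pair $\{u,v\} \subset \{x,y,z\}$ with corresponding exponents $\{m,n\} \subset \{a,b,c\}$ (otherwise).

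The main obstacle is to extract a contradiction from these relations combined with the non-triviality of $\widehat\phi$ and the hypothesis $\frac{1}{a}+\frac{1}{b}+\frac{1}{c}\leq 1$. For the two-term case $\rho(u)^m = -\rho(v)^n$, since $p \nmid mn$, setting $g = \gcd(m,n)$ and rewriting the equation as $(\rho(u)^{m/g})^g = -(\rho(v)^{n/g})^g$ in the domain $\gr(A)$ shows that $\rho(u)^{m/g}$ and $\rho(v)^{n/g}$ are associates via a $g$-th root of $-1$; combined with $\gcd(m/g, n/g) = 1$, this forces a positive-degree common factor of $\rho(u)$ and $\rho(v)$ in $\gr(A)$, which will conflict with the coprimality of $u, v$ in $A$ transferred appropriately through the filtration. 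For the three-term case, the hypothesis $\frac{1}{a}+\frac{1}{b}+\frac{1}{c}\leq 1$ translates into $\alpha+\beta+\gamma \leq d$, and I would iterate the homogenization on the Pham-Brieskorn-type subring $k[\rho(x),\rho(y),\rho(z)] \subseteq \gr(A)$ (which, since $p \nmid \gcd(a,b,c)$, admits a surjection from $B_{(a,b,c)}$) to eventually reduce to the two-term case. Making this iterative reduction rigorous is the technical heart of the argument.
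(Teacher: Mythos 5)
Your opening reduction is exactly the paper's: the ``Moreover'' part by iterating through $\ml_n(A)$ using factorial closure and \thref{Remark 6}(iv), and the observation that it suffices to place one of $x,y,z$ in $\A$ (via \thref{Mini Mason 1}). But the core case, where $\deg_\phi(x),\deg_\phi(y),\deg_\phi(z)>0$, is where your argument stops being a proof, and the route you sketch has concrete obstructions. First, $A$ is an arbitrary $k$-domain, and your replacement of $A$ by a finitely generated subdomain on which the $\deg_\phi$-filtration is admissible is unjustified; worse, pairwise relative primality of $x,y,z$ need not survive passage to such a subring (\thref{Remark 6}(iv) only covers factorially closed subrings), and it certainly does not transfer to $\gr(A)$, which is what your two-term case requires. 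Second, in the two-term case $\rho(u)^m=-\rho(v)^n$ you cannot extract associates from $(\rho(u)^{m/g})^g=-(\rho(v)^{n/g})^g$ in a general domain (no roots of unity, no unique factorization in $\gr(A)$), and even granting that step, when $m\mid n$ the conclusion is only that $\rho(v)$ divides $\rho(u)$ in $\gr(A)$, which contradicts nothing you have established. Third, the three-term case is left as ``iterate the homogenization,'' with no termination argument and no point at which the hypothesis $\frac{1}{a}+\frac{1}{b}+\frac{1}{c}\le 1$ actually forces a contradiction; you acknowledge this is the technical heart, and it is precisely the part that is absent.

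The paper avoids homogenization entirely here. Assuming $x,y,z\notin\A$, it sets $m=\min\{i\ge 1 \mid \phi^{(i)}(x)\ne 0\text{ or }\phi^{(i)}(y)\ne 0\text{ or }\phi^{(i)}(z)\ne 0\}$, applies the Leibniz rule to $x^a+y^b+z^c=0$ to obtain $ax^{a-1}\phi^{(m)}(x)+by^{b-1}\phi^{(m)}(y)+cz^{c-1}\phi^{(m)}(z)=0$, and then runs a Mason--Stothers style determinant/adjugate computation with the matrix built from the rows $(x,y,z)$, $(a\phi^{(m)}(x),b\phi^{(m)}(y),c\phi^{(m)}(z))$ and $(0,0,1)$. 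Pairwise relative primality converts the resulting identities into divisibilities such as $x^{a-1}\mid cy\phi^{(m)}(z)-bz\phi^{(m)}(y)$, whence $(a-1)\deg_\phi(x)\le\deg_\phi(y)+\deg_\phi(z)-m$ and its two analogues; this is where \thref{lemma : properties}(iii) and $p\nmid abc$ enter. Summing these against $\frac{1}{a}+\frac{1}{b}+\frac{1}{c}\le 1$ gives $j\le(j-m)(\frac{1}{a}+\frac{1}{b}+\frac{1}{c})\le j-1$ for $j=\deg_\phi(x)+\deg_\phi(y)+\deg_\phi(z)$, a contradiction. To salvage your approach you would need to supply all three missing pieces; it is far simpler to adopt this degree-counting argument.
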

	\begin{proof}
		Let $\phi$ be  a non-trivial exponential map on $A$. We want to show that $k[x,y,z]\subseteq \A$. 
		Suppose at least one of $x,y,z$ is in $\A$, say $z\in \A$. Then $x^a + y^b\in \A\setminus \{0\}$. Since $\frac{1}{a} + \frac{1}{b} + \frac{1}{c} \leq 1$, and hence $a,b,c\geq 2$,  by \thref{Mini Mason 1}, $x,y \in \A$ i.e. $k[x,y,z]\in \A$. Thus it suffices to show that at least one of $x,y,z$ is in $\A$. 
		
		Suppose, if possible, $x,y,z\notin \A$. Then there would exist a positive integer $m$ defined as follows:
		\begin{center}
			$m:=$  min$\{ i\in \N \mid $  $\phi^{(i)}(x)\neq 0$ or  $\phi^{(i)}(y)\neq 0$ or $\phi^{(i)}(z)\neq 0\}$.
		\end{center}
		Now $\phi^{(i)}(x)= \phi^{(i)}(y)= \phi^{(i)}(z)= 0$ for $1\leq i < m $ and $\phi^{(m)}(x^a + y^b + z^c) = 0$, so by Leibniz rule we would have, 
		\begin{equation}\label{1}
			ax^{a-1}\phi^{(m)}(x) + by^{b-1}\phi^{(m)}(y) + cz^{c-1}\phi^{(m)}(z) = 0.
		\end{equation}
		Hence, by the definition of $m$, at least any two of $\phi^{(m)}(x),\phi^{(m)}(y),\phi^{(m)}(z)$ would be non-zero, say $\phi^{(m)}(x) \neq 0$ and $\phi^{(m)}(y) \neq 0$. 
		
		Let $M$ be the 3$\times$3 matrix with elements from A defined by
		$$M = 
		\begin{bmatrix}
			x & y & z \\
			a\phi^{(m)}(x) & 	b\phi^{(m)}(y) & 	c\phi^{(m)}(z)\\
			0 & 0 & 1\\
		\end{bmatrix}.$$
		Then, by (\ref{1}) and the given condition $x^a + y^b + z^c = 0$, we would have
		$$M
		\begin{bmatrix}
			x^{a-1}\\y^{b-1}\\z^{c-1}\\
		\end{bmatrix} 
		= z^{c-1}
		\begin{bmatrix}
			0\\0\\1\\
		\end{bmatrix}.$$
		
		\noindent 
		Since $x,y$ are relatively prime in $A$ and $x\nmid\phi^{(m)}(x)$ (by \thref{lemma : properties}(iii)), it would follow that $bx\phi^{(m)}(y) - ay\phi^{(m)}(x) \neq 0$ and  hence det($M)\neq 0$.
		Let Adj($M) = (c_{ij})$.
		Then
		$$\text{det}(M) 
		\begin{bmatrix}
			x^{a-1}\\y^{b-1}\\z^{c-1}\\
		\end{bmatrix} = z^{c-1}\text{Adj}(M)
		\begin{bmatrix}
			0\\0\\1\\
		\end{bmatrix}= z^{c-1}
		\begin{bmatrix}
			c_{13}\\c_{23}\\c_{33}\\
		\end{bmatrix} = z^{c-1}
		\begin{bmatrix}
			cy\m(z) - bz\m(y)\\
			az\m(x) - cx\m(z)\\
			bx\m(y)-ay\m(x)\\
		\end{bmatrix},$$
		that is,
		$$\begin{array}{lll}
			\text{det}(M) x^{a-1} & = &  z^{c-1}(cy\m(z) - bz\m(y)) \text{~~and~~}\\
			\text{det}(M)y^{b-1} & = & z^{c-1}(az\m(x) - cx\m(z)). 
		\end{array}$$	
		Next, since $x^{a-1}, y^{b-1}$ and $z^{c-1}$ are pairwise relatively prime (cf. \thref{Remark 6} (ii)), it would follow
		$$\begin{array}{llll}
			x^{a-1} & \text{~~divides~~} & (cy\m(z) - bz\m(y)),\\
			y^{b-1} & \text{~~divides~~} & (az\m(x) - cx\m(z))\text{~~and~~}\\
			z^{c-1} & \text{~~divides~~} & (bx\m(y) - ay\m(x)) \text{~(= det}(M)). 
		\end{array}$$
		Therefore: 
		\begin{equation}\label{2}
			(a-1)\deg_{\phi}(x)\leq \deg_{\phi}(y) + \deg_{\phi}(z) - m, 
		\end{equation}
		\begin{equation}\label{3}
			\hspace{0.8cm}(b-1)\deg_{\phi}(y)\leq \deg_{\phi}(z) + \deg_{\phi}(x) - m\text{~and~}
		\end{equation}
		\begin{equation}\label{4}
			(c-1)\deg_{\phi}(z)\leq \deg_{\phi}(x) + \deg_{\phi}(y) - m.	
		\end{equation}	
		Let $j = \deg_{\phi}(x) + \deg_{\phi}(y) + \deg_{\phi}(z)$. Then from equations (\ref{2}), (\ref{3}) and (\ref{4}) we get 
		\begin{center}
			$j \leq (j - m)(\frac{1}{a} + \frac{1}{b} + \frac{1}{c})\leq (j -1)(\frac{1}{a} + \frac{1}{b} + \frac{1}{c})\leq (j -1)$,
		\end{center}
		a contradiction. 
		
		Therefore, at least one of $x,y,z$ and hence all of them are in $\A$; i.e. $k[x,y,z]\subseteq \A$. 
		
		Since $\phi$ is chosen arbitrarily, $k[x,y,z]\subseteq \ml(A)$.
		
		As $\ml(A)$ is a factorially closed domain (cf. \thref{lemma : properties}(i)), it follows that $x,y,z$ are also pairwise relatively  prime in $\ml(A)$ (cf. \thref{Remark 6}(iv)). Now a simple induction  shows that $k[x,y,z]\subseteq \ml_n(A)$, for all $n\in\N$. Therefore $k[x,y,z]\subseteq\mathcal{R}(A)$.	
	\end{proof}
	
	Now Theorem A follows from the above proposition. 
	
	\begin{thm}\thlabel{Stably rigid 1}
		Let $k$ be a field of characteristic $p\geq 0$.
		For any $a,s_2,s_3\in\N$, $r,e\in \pZ$ and $p\nmid as_2s_3$, the domain
		$$
		B_{(a,s_2p^r,s_3p^e)}:=\dfrac{k[X,Y,Z]}{(X^a + Y^{s_2p^r} + Z^{s_3p^e})}
		$$  
		is stably rigid when $\frac{1}{a} + \frac{1}{s_2} + \frac{1}{s_3}\leq 1$.
	\end{thm}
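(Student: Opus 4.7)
The plan is to deduce stable rigidity of $B = B_{(a, s_2 p^r, s_3 p^e)}$ from Proposition \thref{Theorem 1} by passing to the auxiliary elements $u := y^{p^r}$ and $v := z^{p^e}$. For any fixed $N \geq 0$, the goal is to establish $B \subseteq \ml(B^{[N]})$; with $x, y, z$ denoting the images of the indeterminates, the key observation is that $x^a + u^{s_2} + v^{s_3} = 0$, and the triple $(a, s_2, s_3)$ satisfies $p \nmid a s_2 s_3$ together with $\tfrac{1}{a} + \tfrac{1}{s_2} + \tfrac{1}{s_3} \leq 1$, precisely the hypotheses of Proposition \thref{Theorem 1}.

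I would first note that $B$ is an integral domain: since $p \nmid a$, the defining polynomial is irreducible by the Eisenstein-type criterion recalled in the introduction. The technical heart of the argument is showing that $x, y, z$ are pairwise relatively prime in $B$; granted this, Remark \thref{Remark 6}(ii)--(iii) transfers pairwise relative primality to $x, u, v$ inside $B^{[N]}$. The pair $(y, z)$ is straightforward: $B/yB \cong k[X, Z]/(X^a + Z^{s_3 p^e})$ is a domain because $p \nmid a$ makes the polynomial irreducible. For $(x, y)$ and $(x, z)$, the inequality forces $s_2, s_3 \geq 2$, but when $p > 0$ and $\min(r, e) \geq 1$ the quotient $B/xB \cong k[Y, Z]/(Y^{s_2 p^r} + Z^{s_3 p^e})$ is non-reduced; assuming without loss of generality $r \leq e$, I would factor $Y^{s_2 p^r} + Z^{s_3 p^e} = h^{p^r}$ with $h := Y^{s_2} + Z^{s_3 p^{e-r}}$ irreducible (again because $p \nmid s_2$). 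Then $(h)$ is the unique associated prime of $(h^{p^r})$, and since $\deg h \geq 2$ neither $Y$ nor $Z$ lies in $(h)$, so $y, z$ are non-zero-divisors in $B/xB$, yielding the required relative primality.

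With this in hand, applying Proposition \thref{Theorem 1} to $A = B^{[N]}$ and the triple $(x, u, v)$ yields $k[x, y^{p^r}, z^{p^e}] \subseteq \ml(B^{[N]})$. Since every ring of invariants is factorially closed (Lemma \thref{lemma : properties}(i)) and the intersection of factorially closed subrings stays factorially closed, $y^{p^r}, z^{p^e} \in \ml(B^{[N]})$ forces $y, z \in \ml(B^{[N]})$, whence $B = k[x, y, z] \subseteq \ml(B^{[N]})$, as desired. The main obstacle is the pairwise relative primality step in the non-reduced case ($p > 0$ with $\min(r, e) \geq 1$), where the naive ``the quotient is a domain'' argument fails and one must instead analyze the associated primes of a power of an irreducible polynomial.
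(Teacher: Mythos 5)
Your overall strategy is exactly the paper's: pass to $u=y^{p^r}$, $v=z^{p^e}$, apply \thref{Theorem 1} to the relation $x^a+u^{s_2}+v^{s_3}=0$ in $A=B^{[N]}$, and recover $y,z\in\ml(A)$ from factorial closedness. The paper disposes of the pairwise relative primality of $x,u,v$ by citing \thref{Remark 6}(ii)--(iii), implicitly taking the relative primality of $x,y,z$ in $B$ for granted; you try to prove that step, which is commendable, but two of your irreducibility claims are false. A binomial $X^a+Z^c$ need not be irreducible just because $p\nmid a$: it factors whenever $\gcd(a,c)>1$ and $k$ contains suitable roots of unity. For instance $(a,s_2,s_3)=(3,2,6)$ satisfies $\frac13+\frac12+\frac16=1$, yet $B/yB\cong k[X,Z]/(X^3+Z^6)$ and $X^3+Z^6=(X+Z^2)(X^2-XZ^2+Z^4)$ is not irreducible, so $B/yB$ is not a domain. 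Likewise $h=Y^{s_2}+Z^{s_3p^{e-r}}$ need not be irreducible (take $(a,s_2,s_3)=(4,4,2)$, $r=e\geq 1$, $p>2$, $i\in k$ with $i^2=-1$: $h=Y^4+Z^2=(Y^2+iZ)(Y^2-iZ)$), so $(h)$ need not be the unique associated prime of $(h^{p^r})$.

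The gap is repairable without leaving your route, because irreducibility is never actually needed: it suffices that $y$ and $z$ are non-zero-divisors modulo $x$ (and $z$ modulo $y$, etc.), since if $yf=zg$ in the domain $B$ with $z$ a non-zero-divisor mod $yB$ then $g\in yB$ and one divides through to get $yf\in yzB$. Since $p\nmid a$ (resp.\ $p\nmid s_2$), the polynomial $X^a+Z^{s_3p^e}$ (resp.\ $h$) is squarefree, the associated primes of the quotient (resp.\ of $(h^{p^r})$) are generated by its irreducible factors, and no such factor is an associate of $Y$ or $Z$ because neither $Y$ nor $Z$ divides the polynomial in question. With that correction your argument closes the one step the paper leaves implicit and otherwise coincides with the paper's proof.
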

	\begin{proof}
		Set $B:=B_{(a,s_2p^r,s_3p^e)}$ and let $A = B^{[m]}$, for arbitrary $m\in\pZ$.  	
		Let $x,y,z$ be the images of $X,Y$ and $Z$ in $B$ respectively. Now $x,y^{p^r},z^{p^e}$ are all pairwise relatively prime in $A$ (cf. \thref{Remark 6} (ii) and (iii)).
		Further $x^a + (y^{p^r})^{s_2} + (z^{p^e})^{s_3} = 0$ in $A$ with $(\frac{1}{a}+\frac{1}{s_2} + \frac{1}{s_3})\leq 1$ and $p\nmid as_2s_3$.
		Therefore, by \thref{Theorem 1}, $x,y^{p^r},z^{p^e}\in \ml(A)$. Hence $B =k[x,y,z]\subset \ml(A)$, as $\ml(A)$ is a factorially closed domain (cf. \thref{lemma : properties}(i)). Thus $B$ is stably rigid.	
	\end{proof}

	\section{\textbf{Some Auxiliary Results on Rigidity}}\label{AS}
	
	In this section we are going to prove the rigidity of domains of the form $$\dfrac{k[X,Y,Z]}{(X^a + Y^bZ^c+ F(Y))} \text{~~where~} a,b,c\geq 2,\,  gcd(a,b,c)=1 \text{~and~} F(Y)\in k[Y].$$
	These results will be used in sections \ref{ThmB} and \ref{app}. 
	We first prove a technical lemma in this connection. 
	
	\begin{lem}
		{ Let $R$ be a $\pZ$-graded, finitely generated $k$-domain, $r\in \N$ and let $h\in R$ be such that 
			$$
			C:= \dfrac{R[X]}{(X^r + h)} \text{~ and ~} 
			\widehat{C}:= \dfrac{R[X]}{(X^r + \widehat{h})}
			$$ 
			are integral domains, where $\widehat{h}$ is the highest degree homogeneous summand of $h$.	 Then we can define an admissible $\Z$-filtration on $C$ such that $\gr(C)\cong \widehat{C}$. 
		}
	\end{lem}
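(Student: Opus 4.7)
The idea is to regrade $R[X]$ so that $X^r + \widehat{h}$ becomes homogeneous and then push the induced filtration through the quotient $\pi : R[X] \twoheadrightarrow C$. Let $d := \deg_R(\widehat{h})$. Regrade $R[X]$ by declaring $a \in R_j$ to have weight $rj$ and $X$ to have weight $d$. Then $R[X] = \bigoplus_{n \geq 0} (R[X])_n$ with $(R[X])_n = \bigoplus_{rj + id = n} R_j X^i$, and in this grading the top-degree homogeneous summand of $X^r + h$ is precisely $X^r + \widehat{h}$, of degree $rd$. I would then define
$$
C_n := \pi\!\left(\bigoplus_{m \leq n}(R[X])_m\right).
$$
The first three axioms of a proper $\Z$-filtration are immediate (for axiom (iii), note that $C_n = 0$ for $n < 0$).

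The crux is the isomorphism $\gr(C) \cong \widehat{C}$; axiom (iv) then follows because $\widehat{C}$ is a domain by hypothesis. There is a natural graded $k$-algebra surjection $\Psi : R[X] \twoheadrightarrow \gr(C)$ sending a homogeneous $aX^i \in (R[X])_m$ to $[ax^i] \in C_m/C_{m-1}$. The image of $X^r + \widehat{h} \in (R[X])_{rd}$ in $C$ equals $\widehat{h} - h \in R$, which has $R$-degree $< d$ and hence weight $< rd$; so $X^r + \widehat{h}$ lies in $\ker \Psi$, and $\Psi$ descends to a graded surjection $\overline{\Psi} : \widehat{C} \twoheadrightarrow \gr(C)$.

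For injectivity of $\overline{\Psi}$ I would exploit that both $C$ and $\widehat{C}$ are free $R$-modules of rank $r$ with bases $\{x^i\}_{0 \leq i < r}$ and $\{\hat{x}^i\}_{0 \leq i < r}$ respectively. Iteratively applying $x^r = -h$ to reduce polynomials of weight $\leq n$ yields the identity
$$
C_n = \bigoplus_{i=0}^{r-1} R_{\leq \lfloor (n-id)/r \rfloor}\, x^i,
$$
where $R_{\leq m} := \bigoplus_{j \leq m} R_j$ (and is $0$ for $m<0$). This shows that $C_n/C_{n-1}$ is isomorphic as an $R$-module to the $n$-th graded piece of $\widehat{C}$, and $\overline{\Psi}$ matches them basis-by-basis. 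The main technical obstacle is establishing this formula for $C_n$: one must verify that each reduction $X^r \mapsto -h$ does not raise the weight of any resulting monomial, and this uses precisely the condition that every homogeneous summand of $h$ has $R$-degree at most $d = \deg_R(\widehat{h})$. Finally, taking $\Gamma := G \cup \{x\}$ for any finite homogeneous generating set $G$ of $R$ makes the filtration admissible.
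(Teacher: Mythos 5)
Your argument is correct, and in fact the filtration you construct is the same one the paper uses: giving $R_j$ weight $rj$ and $x$ weight $d$ is precisely the paper's prescription $\deg\bigl(\sum_{j<r} g_j x^j\bigr)=\max_j\{d(g_j)+ja\}$ after its rescaling step, and your comparison map $\Psi$ is the paper's $\psi$. Where you genuinely diverge is in the two key verifications. First, the paper establishes the multiplicative axiom (iv) \emph{up front}: it writes the leading forms of $f$ and $g$ in the basis $1,x,\dots,x^{r-1}$, observes that a degree drop would force $\bigl(\sum_j f_{0j}x^j\bigr)\bigl(\sum_i g_{0i}x^i\bigr)=0$ in $\widehat C$, and invokes that $\widehat C$ is a domain and free over $R$; you instead deduce properness at the very end from the ring isomorphism $\gr(C)\cong\widehat C$. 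Second, for injectivity of the induced map $\widehat C\to\gr(C)$ the paper argues via $\td_R\gr(C)=0$ together with primality of $(X^r+\widehat h)$, whereas you compute $C_n=\bigoplus_{i<r}R_{\le\lfloor(n-id)/r\rfloor}\,x^i$ explicitly and match graded pieces of free $R$-modules. Your key technical point --- that the rewriting $x^r\mapsto -h$ never raises the weight, precisely because $\widehat h$ is the \emph{top} homogeneous summand of $h$ --- is correct and is the content implicitly carried by the paper's use of the unique normal form $g=\sum_{j<r}g_jx^j$. What each approach buys: yours is more explicit and needs no primality or transcendence-degree input for the isomorphism, at the cost of the normal-form bookkeeping; the paper's avoids that bookkeeping but must do the separate leading-form calculation to get a degree function before $\gr(C)$ can even be formed as a domain. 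Both uses of the hypothesis that $\widehat C$ is a domain are essential and occur at logically equivalent points.
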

	\begin{proof}
		Let $d$ denote the degree function induced on $R$ by the given $\pZ$-grading (cf. \thref{Remark 3}). 
		We can assume that $r$ divides $d(h)$, if necessary we can change the grading by multiplying the degree function suitably and set $a:=d(h)/r$. We regard $R$ as a subring of $C$. Let $x$ denote the image of $X$ in $C$. By using the relation $x^r =- h$, every $g$ in $C$ has a unique representation as
		\begin{equation}\label{r}
			g = \sum\limits_{j=0}^{r-1}g_jx^j\text{
				for some~} g_j\text{~in~} R.
		\end{equation} 
		Let $\deg: C\setminus \{0\}\rightarrow \pZ$ be a function defined by, 
		$$
		\deg(g)= \max_{0\leq j< r}\{d(g_j) + ja\}.
		$$
		It is clear that $\deg$ is a semi-degree function. We now show that for non-zero $f,g\in C$ with $\deg(f)= l$ and $\deg(g) = m$ we have $\deg{(fg)}= l+m$.
		This would imply that the above defined $\deg$ is a degree function on $C$. 
		
		Let $f,g\in C\setminus\{0\}$ with $\deg(f)= l$ and $\deg(g) = m$. Let $f_0$ and $g_0$ respectively denote the highest degree homogeneous summands of $f$ and $g$ in $C$. Let 
		$$f_0=\sum\limits_{j=0}^{r-1}f_{0j}x^j\text{~and~}g_0=\sum\limits_{i=0}^{r-1}g_{0i}x^i$$
		be the unique representation of $f_0$ and $g_0$ respectively, where $f_{0j}$, $0\leq j< r$, are either zero or homogeneous elements of $R$ with $\deg(f_{0j}) =l-ja$ and $g_{0i}$, $0\leq i< r$, are either zero or homogeneous elements of $R$ with $\deg(g_{0i}) =m-ia$. Suppose, if possible, that $\deg(fg)< l+m$. Then we would have $\deg(f_0g_0)< l+m$, i.e.,
		$$\sum\limits_{i+j=0}^{r-1}f_{0j}g_{0i}x^{i+j} + \sum\limits_{i+j=r}^{2r-2}f_{0j}g_{0i}(-\widehat{h})x^{i+j-r} = 0,$$
		i.e., $(\sum_{j=0}^{r-1}f_{0j}x^j)(\sum_{i=0}^{r-1}g_{0i}x^i)= 0$ in $\widehat{C}$ (we use the same symbol $x$ for the image of $X$ in $\widehat{C}$). Now as $\widehat{C}$ is an integral domain, it would follow that either $\sum_{j=0}^{r-1}f_{0j}x^j = 0$ or $\sum_{i=0}^{r-1}g_{0i}x^i = 0$, say $\sum_{i=0}^{r-1}g_{0i}x^i=0$ in $\widehat{C}$. Since $\{1,x,\dots, x^{r-1}\}$ is a basis of $\widehat{C}$ over $R$, it would follow that $g_0 = 0$ in $C$, a contradiction. Thus $\deg(fg) = l+m$.
		
		Since $R$ is a finitely generated $k$-domain and since $R$ is a graded ring with respect to the degree function $\deg$, by (\ref{r}), it follows that  this filtration on $C$ is admissible with respect to a homogeneous generating set of $R$ and $x$.  
		
		Let $\overline{C}$ denote the associated $\Z$-graded domain determined by the above filtration. For any $g\in C$, let $\overline{g}$ denote its image in $\overline{C}$. Note that $R$ can be regarded as a graded subring of $\overline{C}$.
		
		Let $\psi: R[X]\rightarrow \overline{C}$ be the surjective $R$-algebra homomorphism defined by $\psi(X)=\overline{x}$. Since $x^r+ \widehat{h} = -(h- \widehat{h})$ and since $\deg(h- \widehat{h}) <\deg (x^r+ \widehat{h})$, we have $\overline{x}^r + \widehat{h} = 0$ in $\overline{C}$. Since $\overline{C}$ is an integral domain with $\td_R\overline{C} =0$ and $(X^r + \widehat{h})$ is a prime ideal of $R[X]$, it follows that $\psi$ induces an isomorphism $\overline{\psi}:{R[X]}/{(X^r + \widehat{h})}\rightarrow \overline{C}$.
	\end{proof} 
	In particular, when $R= k^{[2]}$, we have the following result which will be needed subsequently.
	
	\begin{cor}\thlabel{graded}
		Let $r\in \N$ and $h(Y,Z)\in k[Y,Z]$
		be such that $X^r + h(Y,Z)$ is a prime element of $k[X,Y,Z]$. Suppose $k[Y,Z]$ has a $\pZ$-graded structure such that $X^r + \widehat{h}(Y,Z) $ is also a prime element of $k[X,Y,Z]$, where $\widehat{h}(Y,Z)$ is the highest degree homogeneous summand of $h$.
		Then we can define an admissible $\Z$-filtration on $$C:=\dfrac{k[X,Y,Z]}{(X^r + {h}(Y,Z)) }\text{~such that~} \gr(C)\cong \dfrac{k[X,Y,Z]}{(X^r + \widehat{h}(Y,Z))}.$$
	\end{cor}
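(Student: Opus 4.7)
The plan is to deduce the corollary as a direct specialization of the preceding lemma by taking $R := k[Y,Z]$ equipped with the given $\pZ$-grading. First I would check that $R = k[Y,Z]$ is a finitely generated $\pZ$-graded $k$-domain, so that it fulfills the role of the ambient ring $R$ in the lemma. The element $h = h(Y,Z)$ lies in this $R$, and its highest-degree homogeneous summand with respect to the given grading is precisely $\widehat{h}(Y,Z)$, matching the notation $\widehat{h}$ of the lemma.

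Next I would verify the two domain conditions required by the lemma. Identifying $R[X] = k[Y,Z][X] \cong k[X,Y,Z]$, the ring $C = R[X]/(X^r + h)$ is an integral domain precisely because $X^r + h(Y,Z)$ is a prime element of $k[X,Y,Z]$ by hypothesis; likewise $\widehat{C} = R[X]/(X^r + \widehat{h})$ is an integral domain because $X^r + \widehat{h}(Y,Z)$ is prime by hypothesis. With all hypotheses of the lemma in place, invoking it produces an admissible $\Z$-filtration on $C$ with $\gr(C) \cong \widehat{C} \cong k[X,Y,Z]/(X^r + \widehat{h}(Y,Z))$, which is exactly the statement of the corollary.

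There is no genuine obstacle here: the corollary is a clean specialization of the lemma, and no new argument is needed. The substantive work has already been done inside the lemma, where the delicate step was showing that the candidate function $\deg(g) := \max_{0 \leq j < r}\{d(g_j) + ja\}$, defined via the unique expansion $g = \sum_{j=0}^{r-1} g_j x^j$, is genuinely multiplicative; that step used the integral-domain hypothesis on $\widehat{C}$ together with the fact that $\{1, x, \dots, x^{r-1}\}$ is a free $R$-basis of $\widehat{C}$. Both of these features pass over to the present setting with $R = k[Y,Z]$ without alteration.
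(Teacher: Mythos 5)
Your proposal is correct and matches the paper exactly: the paper states the corollary as the immediate specialization of the preceding lemma to $R = k[Y,Z]$ (``In particular, when $R = k^{[2]}$\dots''), with the primality hypotheses serving precisely to guarantee that $C$ and $\widehat{C}$ are integral domains as the lemma requires. No further argument is given or needed.
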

	
	The next lemma has been proved by Crachiola and Maubach over a field of characteristic zero (\cite[Lemma 5.1]{CS}). We prove it for arbitrary characteristic.

	\begin{lem}\thlabel{Lemma 3}
		Let $a,b,c$ be three integers $\geq 2$ with $gcd(a,b,c) = 1$. Then \\$D \coloneq k[X,Y,Z]/(X^a + Y^bZ^c)$ is a rigid domain.
	\end{lem}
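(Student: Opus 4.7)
I assume, for contradiction, that $\phi\colon D\to D[U]$ is a non-trivial exponential map on $D$. First, $D$ is an integral domain since $X^a+Y^bZ^c$ is irreducible in $k[X,Y,Z]$ (a standard consequence of $\gcd(a,b,c)=1$ together with $b,c\geq 2$). Write $x,y,z$ for the images of $X,Y,Z$. Two structural facts are essential. First, $D$ is a free $k[y,z]$-module with basis $\{1,x,\dots,x^{a-1}\}$; consequently, if $w\in yD\cap zD$ has basis expansion $w=\sum_{i<a} w_i x^i$ with $w_i\in k[y,z]$, each $w_i$ is divisible by both $y$ and $z$ in $k[y,z]$, hence by $yz$, giving $w\in yzD$. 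Thus $y,z$ are relatively prime in $D$ in the sense of \thref{Remark 6}. Second, the weights $\deg x=2bc,\ \deg y=ac,\ \deg z=ab$ make $x^a+y^bz^c$ homogeneous of degree $2abc$, endowing $D$ with a connected $\pZ$-grading ($D_0=k$); by \thref{Remark 1} the associated filtration on $D$ is admissible.

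Applying \thref{Theorem: homogeneization} to this admissible filtration, $\phi$ induces a non-trivial homogeneous exponential map $\widehat\phi$ on $\gr(D)\cong D$; it suffices to contradict the existence of such a $\widehat\phi$, so I may assume $\phi$ itself is homogeneous. Let $m$ be the smallest positive integer for which at least one of $\phi^{(m)}(x),\phi^{(m)}(y),\phi^{(m)}(z)$ is non-zero. Applying $\phi^{(m)}$ to $x^a+y^bz^c=0$ and invoking the Leibniz rule along with the vanishing $\phi^{(i)}(x)=\phi^{(i)}(y)=\phi^{(i)}(z)=0$ for $0<i<m$ yields
\[
a\,x^{a-1}\phi^{(m)}(x)+b\,y^{b-1}z^c\phi^{(m)}(y)+c\,y^b z^{c-1}\phi^{(m)}(z)=0.
\]
Multiplying by $yz$, substituting $y^bz^c=-x^a$, and cancelling $x^{a-1}$ in the domain $D$ produces the key identity
\begin{equation*}
a\,yz\,\phi^{(m)}(x)\;=\;x\bigl(bz\,\phi^{(m)}(y)+cy\,\phi^{(m)}(z)\bigr)\quad\text{in } D. \tag{$\ast$}
\end{equation*}

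The endgame is a case analysis on the characteristic $p$, using that $\gcd(a,b,c)=1$ prevents $p$ from dividing all three of $a,b,c$. In each case, $(\ast)$ together with homogeneity of $\phi^{(m)}(x),\phi^{(m)}(y),\phi^{(m)}(z)$, the relative primality of $y,z$ in $D$, and the $k[y,z]$-basis expansion of these elements, should yield auxiliary relations of the form $c_1f^{\alpha}+c_2g^{\beta}\in D^\phi\setminus\{0\}$ with $c_1,c_2\in D^\phi\setminus\{0\}$ satisfying the hypotheses of \thref{Mini Mason 1} or \thref{Mini Mason 2}; these will force $y,z\in D^\phi$. Once $y,z\in D^\phi$, we obtain $y^bz^c=-x^a\in D^\phi$, and factorial closedness of $D^\phi$ in $D$ (\thref{lemma : properties}(i)) yields $x\in D^\phi$, whence $D=D^\phi$, contradicting the non-triviality of $\phi$. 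The principal difficulty is this final case analysis: because $x$ is \emph{not} prime in $D$ (note $D/xD\cong k[Y,Z]/(Y^bZ^c)$ is non-reduced), divisibility cancellations in $(\ast)$ cannot be done naively and must be verified via the explicit basis expansion over $k[y,z]$; moreover, the degenerate characteristic cases $p\mid a$, $p\mid b$, or $p\mid c$ each require producing a Mini-Mason-ready relation by iterated multiplication of $(\ast)$ with $x^a+y^bz^c=0$ and careful exponent bookkeeping.
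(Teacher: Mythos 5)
There is a genuine gap: everything after the identity $(\ast)$ is a plan, not a proof, and it is precisely there that the real difficulty lies. You assert that a case analysis on $p$ ``should yield'' relations $c_1f^{\alpha}+c_2g^{\beta}\in D^{\phi}\setminus\{0\}$ to which \thref{Mini Mason 1} or \thref{Mini Mason 2} applies, but you never produce such a relation, and it is not clear one can be extracted from $(\ast)$: the only identity among $x,y,z$ available at the outset is $x^a+y^bz^c=0$, whose right-hand side is zero, so the Mini--Mason lemmas (which require the combination to lie in $D^{\phi}\setminus\{0\}$) do not apply to it, and $(\ast)$ only relates the $\phi^{(m)}$'s, none of which is yet known to be invariant or nonzero in a usable way. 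Note also that the degree-counting endgame of \thref{Theorem 1}, which your setup closely imitates, is unavailable here: the lemma must cover triples such as $(a,b,c)=(2,2,3)$, where $\frac{1}{a}+\frac{1}{b}+\frac{1}{c}>1$, so the inequality that closes that argument fails. Finally, as you yourself observe, $x$ is not prime in $D$, so the divisibility bookkeeping you defer to the ``endgame'' is exactly the part that needs to be done and is not.

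For comparison, the paper's proof avoids $(\ast)$ entirely. It first shows $x,y,z\notin D^{\phi}$ (for $y$ and $z$ by localizing at $y$, resp.\ $z$, and showing $k(y)[X,Z]/(X^a+y^bZ^c)$ is not normal, hence not $L^{[1]}$, contradicting \thref{lemma : properties}(v)). It then homogenizes $\phi$ \emph{twice}, with respect to two different gradings ($wt_1(y)=0$ and $wt_2(z)=0$); simultaneous homogeneity under both gradings together with $\gcd(a,b,c)=1$ forces an invariant $\widehat{f}\notin k$ to be $\lambda y^nz^m$, whence $y$ or $z$ would be invariant --- contradiction. Your single grading $\deg x=2bc$, $\deg y=ac$, $\deg z=ab$ cannot pin down $\widehat{f}$ in this way, so even the structural part of your argument would need to be reworked along the paper's two-grading lines (or some genuinely new idea supplied) before the proof could be completed.
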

	\begin{proof}
		Let $x,y,z$ denote the images  of $X,Y$ and $Z$  in $D$ respectively. 
		Suppose, if possible, that there exists a  non-trivial exponential map $\phi$ on $D$ and let $f\in\D \setminus k$. 
		
		We first show that $\{x,y,z\}\cap D^\phi=\varnothing$.
		If $x\in D^{\phi}$ then $y,z\in D^{\phi}$, since $D^{\phi}$ is factorially closed (cf. \thref{lemma : properties}(i)), contradicting the fact that $\phi$ is non-trivial. So $x\notin D^{\phi}$. Next suppose, if possible, that $y\in\D$. Then by \thref{lemma : properties}(vi), $\phi$ would induce a non-trivial exponential map on $\widetilde{D} := k(y)[X,Z]/(X^a +y^bZ^c)$. 
		Then, denoting $L$ to be the algebraic closure of $k(y)$ in $\widetilde{D}$, by \thref{lemma : properties}(v), we would have $\widetilde{D}= L^{[1]}$. However,
		when $c\geq a$ then $x/z$ is in the integral closure of $\widetilde{D}$ in Frac$\widetilde{D}$ but not in $\widetilde{D}$ and when $c < a$ then $z/x$ plays the same role, showing that $\widetilde{D}$ is not a normal domain, in particular, $\widetilde{D}\neq L^{[1]}$, a contradiction. Therefore $y\notin D^{\phi}$. 
		Similarly we can conclude that $z\notin D^{\phi}$. 
		
		We consider two different $\Z$-gradings on the $k$-algebra $D$, given by
		\begin{center}
			$wt_1(x) = c,\, wt_1(y) = 0,\, wt_1(z) = a$\\
			$wt_2(x) = b,\, wt_2(y) = a,\, wt_2(z) = 0.$
		\end{center} 
		By \thref{Remark 1}, both the gradings define admissible $\Z$-filtrations on $D$. Now we are going to apply the gradings one by one. 
		For each $g\in D$, let $\widehat{g}$ denote the image of $g$ under the composite isomorphism $D\rightarrow gr_1(D)\rightarrow gr_2(gr_1(D))$.
		For $l = 1,2,$ let $\deg_l$ denote the degree function corresponding to the weight $wt_l$. By \thref{Theorem: homogeneization}, ${\phi}$ induces a non-trivial exponential map $\overline{\phi}$ on $D(\cong gr_1(D))$ with respect to $\deg_1$, and then $\overline{\phi}$ induces another non-trivial exponential map $\widehat{\phi}$ on $D(\cong gr_2(gr_1(D)))$ with respect to $\deg_2$, such that $gr_2(gr_1(f)) = \widehat{f}\in D^{\widehat{\phi}}.$ Then $\widehat{f}\notin k$ and with the help of the relation $x^a = -y^bz^c$, 
		$$\widehat{f}=\sum_{i\in\Gamma}h_i(y,z)x^i \text{~for some non-zero polynomials } h_i(y,z)\in k[y,z]$$
		with $\Gamma\subseteq\{ 0, 1,\dots,a-1\}$. 		 		
		
		We now show that $\Gamma$ is a singleton set.
		Suppose, if possible, there exist distinct $i,j\in\Gamma$. Then $\deg_l(h_i(y,z)x^i) =\deg_l(h_j(y,z)x^j)$ for $l= 1,2$. Let $\alpha_l = \deg_z(h_l)$ and $\beta_l = \deg_y(h_l)$ for $l = i,j$. Since 
		$\deg_1(h_i(y,z)x^i) =\deg_1(h_j(y,z)x^j)$,
		we would have
		\begin{equation}\label{5}
			\alpha_ia + ic = \alpha_ja + jc \text{~and hence~} (\alpha_i - \alpha_j)a = (j - i)c,
		\end{equation}
		and 
		$\deg_2({h_i(y,z)}x^i) = \deg_2({h_j(y,z)}x^j),$  would imply
		\begin{equation}\label{6}
			\beta_ia + ib = \beta_ja + jb \text{~and hence~} (\beta_i - \beta_j)a = (j - i)b.
		\end{equation}
		From $(\ref{5})$ and $(\ref{6})$ it would follow that there exists a prime factor $p_1$ of $a$ such that $p_1| c$ and $p_1|b$, and hence $p_1| gcd(a,b,c)$. But $gcd(a,b,c) = 1$, a contradiction. Thus $|\Gamma| = 1$. 
		
		Again since $x\notin D^{\widehat{\phi}}$ by the earlier argument, it would follow that $\widehat{f} = h_0{(y,z)}= \lambda y^nz^m$, for some $\lambda \in k^*,\, m,\,n\in \pZ$ with at least one of them strictly greater than 0. Therefore either $y\in D^{\widehat{\phi}}$ or $z\in D^{\widehat{\phi}}$ which is not possible. Thus there does not exist non-trivial exponential map $\phi$ on $D$, i.e., $D$ is rigid.	
	\end{proof}
	
	\begin{cor}\thlabel{Cor}
		Let $k$ be a field and $a,b,c\geq 2$ be integers such that $gcd(a,b,c)=1$. 
		Then for any $F(Y)\in k[Y]$, $D:=k[X,Y,Z]/(X^a + Y^bZ^c+ F(Y))$ is a rigid domain.
	\end{cor}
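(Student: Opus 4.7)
My plan is to reduce the rigidity of $D$ to that of $k[X,Y,Z]/(X^a + Y^bZ^c)$, which has already been proved rigid in \thref{Lemma 3}. The bridge is \thref{graded}: under suitable primality hypotheses, it produces an admissible $\Z$-filtration on $D$ whose associated graded ring is exactly this simpler ring. Then any non-trivial exponential map on $D$ would lift, via \thref{Theorem: homogeneization}, to a non-trivial exponential map on the associated graded ring, contradicting \thref{Lemma 3}.

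To apply \thref{graded}, I would first endow $k[Y,Z]$ with a $\pZ$-grading by setting $wt(Y) = a$ and $wt(Z) = aM$, where $M \in \N$ is chosen large enough that $b + cM > \deg F(Y)$. With this choice, the highest-degree homogeneous summand of $h(Y,Z) := Y^bZ^c + F(Y)$ is $\widehat{h} = Y^bZ^c$, and $X^a + \widehat{h} = X^a + Y^bZ^c$ is prime in $k[X,Y,Z]$ by \thref{Lemma 3}; this supplies one of the two hypotheses of \thref{graded}. For the other hypothesis, I need to show that $X^a + Y^bZ^c + F(Y)$ itself is prime. I would extend the grading to $k[X,Y,Z]$ by setting $wt(X) := b + cM$; then $X^a$ and $Y^bZ^c$ have the same weight $a(b+cM)$, while $F(Y)$ has strictly smaller weight. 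Suppose $X^a + Y^bZ^c + F(Y) = pq$ in $k[X,Y,Z]$. Taking highest-weight homogeneous components gives $\widehat{p}\,\widehat{q} = X^a + Y^bZ^c$, which is irreducible by \thref{Lemma 3}. Hence one of $\widehat{p}$, $\widehat{q}$ is a unit of $k[X,Y,Z]$, i.e., a nonzero element of $k$. Since all three weights are strictly positive, the only polynomials of weight $0$ are constants, so the corresponding factor $p$ or $q$ must itself be a nonzero constant. Thus $X^a + Y^bZ^c + F(Y)$ is irreducible, and $D$ is a domain.

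With both primality hypotheses established, \thref{graded} produces an admissible $\Z$-filtration on $D$ with $\gr(D) \cong k[X,Y,Z]/(X^a + Y^bZ^c)$. If $D$ admitted a non-trivial exponential map $\phi$, then by \thref{Theorem: homogeneization}, $\phi$ would induce a non-trivial exponential map on $\gr(D)$, contradicting the rigidity of $k[X,Y,Z]/(X^a + Y^bZ^c)$ furnished by \thref{Lemma 3}. Hence $D$ is rigid. The only non-routine step in this outline is the weight-based irreducibility argument that establishes domain-ness of $D$; once that is in hand, the remainder is a direct application of the homogenization machinery already developed in the paper.
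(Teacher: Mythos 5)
Your proposal is correct and follows essentially the same route as the paper: both endow $k[Y,Z]$ with a positive weighting under which $Y^bZ^c$ is the top homogeneous summand of $Y^bZ^c+F(Y)$, invoke \thref{graded} to get $\gr(D)\cong k[X,Y,Z]/(X^a+Y^bZ^c)$, and conclude via \thref{Theorem: homogeneization} and \thref{Lemma 3} (the paper simply takes $wt(Y)=0$, $wt(Z)=a$, which avoids your choice of a large $M$). Your explicit weight-based verification that $X^a+Y^bZ^c+F(Y)$ is irreducible is a welcome addition, since \thref{graded} requires this hypothesis and the paper leaves it implicit.
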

	\begin{proof}Let $x,y,z$ be the images of $X,Y$ and $Z$ in $D$ respectively.
		We define a $\pZ$-graded structure on $k[Y,Z]$, given by $wt(Y) = 0$ and $wt(Z) =a$.
		Then the highest degree homogeneous summand of $Y^bZ^c+ F(Y)$ is $Y^bZ^c$ and $X^a + Y^bZ^c$ is irreducible (as $gcd(a,b,c) = 1$) and hence prime in $k[X,Y,Z]$.
		Therefore, by \thref{graded}, the  degree function $\deg : D\setminus \{0\}\rightarrow \pZ$ defined by $$\deg(x) = c,\, \deg(y) =0, \,\deg(z) = a$$ admits an admissible $\Z$-filtration on $D$ such that $\gr(D)\cong k[X,Y,Z]/(X^a +Y^bZ^c)$. By \thref{Lemma 3}, $\gr(D)$ is rigid and hence, by \thref{Theorem: homogeneization}, $D$ is also rigid.
	\end{proof}
	
	\section{\textbf{Theorem B: Rigidity of the Pham-Brieskorn rings}}\label{ThmB}
	
	Recall that for a field $k$ of characteristic $p\geq 0$ and for any integer $n\geq 3$, $\underline{a}:=(a_1,a_2,\dots ,a_n)\in \N^n$ with $p\nmid gcd(a_1,\dots,a_n),$
	\begin{center}
		$B_{\underline{a}} = B_{(a_1,a_2,\dots ,a_n)} = \dfrac{k[X_1,X_2,\dots, X_n]}{(X_1^{a_1} + X_2^{a_2}+ \dots+X_n^{a_n})}$
	\end{center}
	denotes a Pham-Brieskorn domain. Note that $B_{(a_1,a_2,\dots ,a_n)} = B_{(a_{\sigma(1)},a_{\sigma(2)},\dots ,a_{\sigma(n)})}$ for every permutation ${\sigma}$ on $\{ 1,2,\dots, n\}$.
	Let $x_i$ be the image of $X_i$ in $B_{(a_1,a_2,\dots, a_n)}$, for $i\in\{1,2,\dots,n\}$. 
	Thus $B_{\underline{a}} = k[x_1,x_2,\dots, x_n]$. 
	Let $(d_1,d_2,\dots,d_n) = (L/a_1,L/a_2,\dots ,L/a_n),$ where $L= {\rm lcm} (a_1,a_2,\dots ,a_n)$. Then there is a unique $\pZ$-grading on $B_{\underline{a}}$ with the property that each $x_i$ is homogeneous of degree $d_i$, for all $i= 1,2,\dots ,n$. We call it the {\it standard $\pZ$-grading} on $B_{\underline{a}}$. The above mentioned grading is proper and admissible by \thref{Remark 1} and $\gr(\Ba)\cong \Ba$. For any $\phi\in$ EXP$(B_{\underline{a}})$, $\widehat{\phi}$ will denote its homogenization with respect to this standard $\pZ$-grading and for any $t\in B$, $\widehat{t}$ will denote its image in $\gr(B)(\cong B)$ with respect to the above mentioned grading. By \thref{Theorem: homogeneization}, if $\phi$ is non-trivial then $\widehat{\phi}$ is also non-trivial.

	\subsection{Proof of (i), (ii) and (iii) of Theorem B}\label{(I)}
	Let $k$ be a field of characteristic $p\geq 0$ and $n$ be an integer $\geq 3$, $\underline{a} := (a_1,a_2,\dots ,a_n)\in \pZ^n$. 
	\begin{enumerate}[\rm(i)]
		\item 
		
		\noindent
		Note that if $a_i = 1$, for some $i$, then $B_{\underline{a}}$ is a polynomial ring hence non-rigid.
		Suppose $a_1 = p^r, a_2 = sp^e$ for some $r,s,e\in\N$ and $r\leq e$. Then we have a non-trivial exponential map $\phi_T$ on $B_{\underline{a}}$ defined as follows:
		$$\begin{array}{llll}
			\phi_T(x_1) &=& x_1-((x_2 + T)^{sp^{e-r}} -x_2^{sp^{e-r}})\\
			\phi_T(x_2) &=& x_2 +T \\
			\phi_T(x_i) &=& x_i, \text{~for all~} i\geq 3.
		\end{array}$$	
		
		\item Suppose $a_1 = a_2 = 2$ and $k$ is a field containing a square root of $-1$; say, $i\in k$ is such that $i^2 = -1$. Then 
		$$\begin{array}{llll}
			B_{(2,2,a_3,\dots,a_n)}  &\cong & k[X_1,X_2,X_3,\dots, X_n]/(X_1^2 + X_2^2 + X_3^{a_3} +\dots+ X_n^{a_n})\\
			&\cong & k[U,V,X_3\dots, X_n]/(UV + X_3^{a_3} +\dots +X_n^{a_n})\\
			&\cong & k[u,v,x_3,\dots ,x_n], 	
		\end{array}$$
		where $U := (X_1 + iX_2), \, V := (X_1 - iX_2)$ and $u,v$ respectively denote the images of $U,V$ in $B_{\underline{a}}$.
		Then we have a non-trivial exponential map $\phi_T$ on $B_{\underline{a}}$ defined by,
		$$\begin{array}{lll} 
			\phi_T(u) &=& u\\
			\phi_T(v) &=& -[(x_3 + uT)^{a_3}+ \dots +(x_n + uT)^{a_n}]/u\\
			\phi_T(x_i) &=&x_i + uT, \text{~for all~} \, i\geq 3.
		\end{array}$$
		\item
		Let
		$A:= k[X_1,X_2,\dots,X_n]/(X_1^{a_1} + X_2^{a_2}+ \dots+X_n^{a_n})^m$ for some $m >1.$  
		Further let $x_i$ denote the image of $X_i$ in $A$ for all $i=1,2,\dots,n$. Then we have a non-trivial exponential map $\phi_T$ on $A$ defined as follows:
		\begin{enumerate}[\rm(a)]
			\item When $p\nmid \prod_{i} a_i$, then
			$$\begin{array}{lll}
				\phi_T(x_i)& =& x_i+({1}/{a_i})x_i(x_1^{a_1}+ x_2^{a_2}+ \dots + x_n^{a_n})^{m-1}T,\, 1\leq i\leq n.
			\end{array}$$
			
			\item When $a_n = sp^e$ for some $s,e\in \N$, $p\nmid s$ with $p^e\geq m> 1$ then,
			$$\begin{array}{lll}
				\phi_T(x_i)& =& x_i, \, 1\leq i\leq n-1\\
				\phi_T(x_n)& =& x_n+(x_1^{a_1}+ x_2^{a_2}+ \dots + x_n^{a_n})T.
			\end{array}$$
			
			\item When $a_n = sp^e$ for some $s,e\in \N$, $p\nmid s$ with $m>p^e> 1.$ Then there exist $r,\alpha\in \pZ$ such that $m= \alpha p^e + r$, $0\leq r< p^e$. Now
			$$\begin{array}{lll}
				\phi_T(x_i)& =& x_i,\, 1\leq i\leq n-1\\
				\phi_T(x_n)& =& x_n+(x_1^{a_1}+ x_2^{a_2}+ \dots + x_n^{a_n})^{\alpha +1}T.
			\end{array}$$ 
		\end{enumerate} 
	\end{enumerate}

	\subsection{Proof of (iv) of Theorem B}
	We first recall the definition of $F_3,\,R_3$, $T_3$ and $S_3$.
	\begin{flushleft}
		$F_3=\{(a_1,a_2,a_3)\in \N^3 \mid p\nmid gcd(a_1,a_2, a_3)\}$,\\
		$T_3 =\{(a_1,a_2, a_3)\in \N^3\mid a_i = 1$ for some $i$, or $\exists \,i,j\in \{1,2,3\},\, i\neq j,\, a_i = a_j =2\},$\\	
		$R_3=\{(a_1,a_2, a_3)\in \N^3\mid a_i = 1$ for some $i$, or  $\exists \, i,j\in \{1,2,3\},\, i\neq j,\, a_i=p^r,a_j= sp^e$, for $r,s,e\in \N ,\,r\leq e \}$\\
		and\\
		$S_3 =\{(2,2m,2p^e)\mid m, e \in \N, \, m>1 \text{~and~} p\nmid 2m \}.$ 
	\end{flushleft}
	Now from subsection \ref{(I)}, it follows that for an algebraically closed field $k$, if $B_{(a,b,c)}$ is rigid then $(a,b,c)\in F_3\setminus(R_3\cup T_3)$. We will show step by step that the converse is true except when $(a,b,c)\in S_3$.
	Let $x,y,z$ respectively denote the images of $X,Y,Z$ in $ k[X,Y,Z]/(X^a + Y^b + Z^c) = B_{(a,b,c)}$.
	
	The first proposition of this section is a useful observation, where rigidity of a subring  guarantees the rigidity of the Pham-Brieskorn domains under consideration.
	
	\begin{prop}
		\thlabel{gsubring criterion}
		Let $k$ be a field of characteristic $p>0$. For any integer $n\geq 3$ let $\underline{a} = (a_1,a_2,\dots, a_n)\in \N^n, s_2,\dots ,s_n, e_2, \dots ,e_n\in \N$ 
		be such that 
		\begin{enumerate}[\rm(i)]
			\item $e_2\leq e_3\leq\dots\leq e_n$
			\item $a_i = s_ip^{e^i}$, $2\leq i \leq n$
			\item $p\nmid a_1s_2\dots s_n$.
		\end{enumerate} 
		Then every non-trivial exponential map $\phi$ on the domain $B_{\underline{a}} \, (= k[x_1, x_2, \dots ,x_n])$ restricts to a non-trivial exponential map $\varphi$ on the subring $A\coloneq k[x_1, x_2^{p^r}, \dots ,x_n^{p^r}]$, where $r := e_2$. Moreover, $A^{\varphi} = A\cap \B_{\underline{a}}$. Thus $A$ rigid implies $B_{\underline{a}}$ is also rigid.
	\end{prop}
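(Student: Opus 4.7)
The key structural observation to exploit is that $A\subseteq B_{\underline{a}}$ is purely inseparable of exponent at most $r$: for every $b\in B_{\underline{a}}$, writing $b$ as a $k$-linear combination of monomials in the $x_i$'s and applying Frobenius, each resulting term $x_1^{\alpha_1 p^r}x_2^{\alpha_2 p^r}\cdots x_n^{\alpha_n p^r}$ lies in $A$ because $x_1\in A$ and $x_i^{p^r}\in A$ for $i\geq 2$. Alongside this I will use that $B_{\underline{a}}$ is a free $A$-module with basis $\mathcal{B}=\{x_2^{\beta_2}\cdots x_n^{\beta_n}\mid 0\leq\beta_i<p^r\}$ (which contains $1$). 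This follows by first observing that $B_{\underline{a}}$ is free over $A_0:=k[x_2^{p^r},\dots,x_n^{p^r}]$ on the basis $\{x_1^{\alpha_1}x_2^{\beta_2}\cdots x_n^{\beta_n}\mid 0\leq\alpha_1<a_1,\,0\leq\beta_i<p^r\}$ (using the defining relation to place $x_1^{a_1}$ in $A_0$), and that $A=A_0[x_1]$ is free over $A_0$ with basis $\{1,x_1,\dots,x_1^{a_1-1}\}$.

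With these in hand, the plan is to show $\phi(A)\subseteq A[U]$ generator by generator. For $i\geq 2$ this is automatic: $\phi(x_i^{p^r})=\phi(x_i)^{p^r}=\sum_j\phi^{(j)}(x_i)^{p^r}U^{jp^r}$, and every coefficient lies in $B_{\underline{a}}^{p^r}\subseteq A$. For $x_1$, I apply $\phi$ to the defining relation $x_1^{a_1}+\sum_{i\geq 2}x_i^{a_i}=0$ and rewrite $x_i^{a_i}=(x_i^{p^r})^{s_ip^{e_i-r}}$ (valid because $e_i\geq r$) to obtain
\[
\phi(x_1)^{a_1}=-\sum_{i=2}^{n}\phi(x_i^{p^r})^{s_ip^{e_i-r}}\in A[U],
\]
while simultaneously $\phi(x_1)^{p^r}\in A[U]$ by the Frobenius identity. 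Since $\gcd(a_1,p^r)=1$, I pick positive integers $u,k$ with $ua_1=1+kp^r$ and obtain
\[
\phi(x_1)\cdot g=\bigl(\phi(x_1)^{a_1}\bigr)^{u}\in A[U],\qquad g:=\bigl(\phi(x_1)^{p^r}\bigr)^{k}\in A[U]\setminus\{0\}
\]
(the $U$-constant term of $g$ is $x_1^{kp^r}\neq 0$).

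The crux, which I expect to be the main obstacle, is now a module-theoretic extraction: in the free $A[U]$-module $B_{\underline{a}}[U]$ with basis $\mathcal{B}\ni 1$, any $f\in B_{\underline{a}}[U]$ satisfying $fg\in A[U]$ for some $g\in A[U]\setminus\{0\}$ must lie in $A[U]$. Indeed, expanding $f=\sum_\beta f_\beta\cdot x_2^{\beta_2}\cdots x_n^{\beta_n}$ with $f_\beta\in A[U]$ and matching basis coefficients in the relation $fg=(fg)\cdot 1\in A[U]\cdot 1$ forces $f_\beta g=0$ for $\beta\neq\underline{0}$; because $A[U]$ is a domain and $g\neq 0$, each such $f_\beta$ vanishes, leaving $f\in A[U]$. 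Applying this to $f=\phi(x_1)$ delivers $\phi(x_1)\in A[U]$, and hence $\phi(A)\subseteq A[U]$.

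The remaining verifications are routine. Setting $\varphi:=\phi|_A$, the two exponential-map axioms transfer directly from $\phi$, and $A^\varphi=A\cap B_{\underline{a}}^\phi$ is immediate from the definition. For non-triviality of $\varphi$: if $\varphi$ fixed every element of $A$, then $\phi(x_1)=x_1$ and $(\phi(x_i)-x_i)^{p^r}=\phi(x_i^{p^r})-x_i^{p^r}=0$ in the domain $B_{\underline{a}}[U]$, forcing $\phi(x_i)=x_i$ for every $i\geq 2$, contradicting non-triviality of $\phi$. The rigidity implication then follows by contrapositive.
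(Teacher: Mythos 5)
Your proof is correct. The overall skeleton matches the paper's: both arguments put $\phi(x_i)^{p^r}$ into $A[U]$ for $i\geq 2$ via Frobenius, put $\phi(x_1)^{a_1}$ into $A[U]$ via the defining relation, and exploit $\gcd(a_1,p^r)=1$ to descend to $\phi(x_1)$ itself. Where you genuinely diverge is the final descent step. The paper works at the level of fraction fields: it sets $K=\operatorname{Frac}(A[T])$ and $L=\operatorname{Frac}(B_{\underline{a}}[T])$, observes that $L/K$ is purely inseparable so that $\phi(x_1)^{p^m}\in K$ for some $m$, deduces $\phi(x_1)\in K$ from coprimality of the exponents, and then lands in $A[T]$ via $K\cap B_{\underline{a}}[T]=A[T]$ --- a step that requires knowing $A$ is a normal domain (cited from Fossum). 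You instead stay entirely inside the rings: you exhibit $B_{\underline{a}}$ as a free $A$-module on the monomial basis $\{x_2^{\beta_2}\cdots x_n^{\beta_n}\mid 0\leq \beta_i<p^r\}$ containing $1$, note that $\phi(x_1)^{p^r}$ itself already lies in $A[U]$, and extract $\phi(x_1)\in A[U]$ from $\phi(x_1)\,g\in A[U]$ by comparing coordinates in the free module. This buys a self-contained, elementary argument that never invokes normality of Pham--Brieskorn rings nor passes to fraction fields; the price is the (routine) verification of the free basis, which your sketch via $A_0=k[x_2^{p^r},\dots,x_n^{p^r}]$ handles adequately. Your explicit verification that $\varphi$ is non-trivial, via $(\phi(x_i)-x_i)^{p^r}=0$ in the domain $B_{\underline{a}}[U]$, is also a welcome addition: the paper leaves that point implicit.
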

	\begin{proof}
		Let $y_i := x_i^{p^r}$ for $i\in \{2,\dots ,n\}$. Then $x_1^{a_1} + \sum_{i=2}^{n}y_i^{s_ip^{e_i-r}} =0$ in $B_{\underline{a}}$. Hence
		$$\begin{array}{lll}
			A = k[x_1,x_2^{p^r},\dots  ,x_n^{p^r}] & = & k[x_1,y_2,\dots ,y_n]\\
			& \cong & k[X_1,Y_2,Y_3,\dots ,Y_n]/(X_1^{a_1} + Y_2^{s_2} + Y_3^{s_3p^{e_3-r}} + \dots + Y_n^{s_np^{e_n-r}})\\
			& = & B_{(a_1,s_2,s_3p^{e_3 -r},\dots , s_np^{e_n-r})}.
		\end{array}$$
		It is well known that $A$ is normal (cf. \cite[Proposition 11.2]{Fossum}).		Suppose $\phi: B_{\underline{a}}\rightarrow B_{\underline{a}}[T]$ is an exponential map. Then it follows that $\phi(y_i) = \phi(x_i)^{p^r}\in A[T]$ for $i\in \{2,\dots ,n\}$. Let $K\coloneq$ Frac($A[T])$ and $L\coloneq$ Frac($B_{\underline{a}}[T])$. 
		Note that
		\begin{equation}\label{7}
			\phi(x_1)^{a_1} = -(\phi(x_2)^{s_2}+ \dots + \phi(x_n)^{s_np^{e_n-r}})^{p^r} = -(\phi(y_2)^{s_2} +\dots + \phi(y_n)^{s_np^{e_n - r}})\in A[T]\subseteq K. 	
		\end{equation}
		Next we observe that $L/K$ is a finite, purely inseparable extension and 
		$[L:K] = p^{(n-1)r}$. Now, as $\phi(x_1)\in L$, there exists $m\in \pZ$ such that 
		$\phi(x_1)^{p^m}\in K$. Since $gcd(a_1,p^m) = 1$ and $\phi(x_1)^{p^m}$,  
		$\phi(x_1)^{a_1}\in K$ (from (\ref{7})), it follows that $\phi(x_1)\in K$. 
		Therefore $\phi(x_1)\in K\cap B_{\underline{a}}[T] = A[T]$, since $A$ is a normal domain. Thus every exponential map $\phi$ on $B_{\underline{a}}$, restricts to an exponential map $\varphi$ on $A$ given by $\varphi(x_1) =\phi(x_1)$ and $\varphi(y_i) = \phi(x_i)^{p^r}$ for $2\leq i\leq n$. Hence $A^{\varphi} = A\cap \B_{\underline{a}}$.
	\end{proof}

	\begin{rem} {\rm The converse of the above proposition is not true in general, i.e., $A$ non-rigid may not imply that $B_{\underline{a}}$ is non-rigid. For example, over an algebraically closed field $k$ of characteristic $p>2$, $B_{(2,2p^r,2p^r)}$ is rigid for any $r\in\N$ (cf. \thref{a=2}) but $A = B_{(2,2,2)}$ is non-rigid.}
	\end{rem}
	
	\begin{rem}\thlabel{Remark 7}
		{\rm Suppose $(a,b,c)\in F_3\setminus T_3
			$ and $p\nmid abc$. By \thref{Theorem 1}, we know that $B_{(a,b,c)}$ is rigid if $(\frac{1}{a} + \frac{1}{b} + \frac{1}{c}) \leq 1$.
			Without loss of generality if we assume $a\leq b \leq c$,
			then the remaining cases we need to consider are $a =2,b = 3$ and $c\in\{3,4,5\}$. Note that in every such case, we get two pairs among $a,b,c$ which are co-prime.}
	\end{rem}
	
	We shall now discuss the rigidity of
	Pham-Brieskorn domains $B_{(a,b,c)}$ where two pairs among $a,b,c$ are co-prime. We first prove two lemmas in this regard. 
	
	\begin{lem}\thlabel{Lemma 2}
		Let $k$ be a field of characteristic $p\geq0$ and $(a,b,c)\in F_3\setminus R_3$, be such that $gcd(a,bc) = 1$. Then for any non-trivial exponential map $\phi$ on $B := B_{(a,b,c)} = k[x,y,z]$, $\{ x,y,z\}\cap \B = \varnothing$.
	\end{lem}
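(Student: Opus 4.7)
The plan is to assume that $\phi$ is a non-trivial exponential map on $B$ and that one of $x,y,z$ lies in $\B$, and derive a contradiction. The hypothesis $gcd(a,bc) = 1$ is symmetric in $y$ and $z$ but distinguishes $x$, so I treat the cases $x\in\B$ and $y\in\B$ separately (the case $z\in\B$ is symmetric to the latter).

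\emph{Case $x\in\B$.} Then $y^b + z^c = -x^a$ lies in $\B$. Write $b = s_bp^{r_b}$, $c = s_cp^{r_c}$ with $p\nmid s_bs_c$. Since $(a,b,c)\notin R_3$, the integers $b$ and $c$ cannot both be pure powers of $p$. If $s_b, s_c\geq 2$, then \thref{Mini Mason 1} applied to $y^b + z^c$ immediately gives $y,z \in \B$. Otherwise, say $s_b = 1$, so that $b = p^{r_b}$ with $r_b\geq 1$ and $s_c\geq 2$; then $(a,b,c)\notin R_3$ additionally forces $r_b > r_c$ whenever $r_c\geq 1$. In either sub-case the Frobenius identity
\[
y^{p^{r_b}} + z^{s_c p^{r_c}} = \bigl(y^{p^{r_b - r_c}} + z^{s_c}\bigr)^{p^{r_c}}
\]
together with factorial closure (\thref{lemma : properties}(i)) places $y^{p^{r_b - r_c}} + z^{s_c}$ in $\B$. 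Applying \thref{Mini Mason 2} — with $f=z$, which is prime in $B$ because $gcd(a,b) = 1$ makes $X^a + Y^b$ irreducible — then yields $y,z\in\B$. Either way $\phi$ is trivial, a contradiction.

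\emph{Case $y\in\B$.} By \thref{lemma : properties}(vi), $\phi$ extends to a non-trivial exponential map on the localization $\widetilde{B} := B\otimes_{k[y]}k(y) = k(y)[X,Z]/(X^a + Z^c + y^b)$. Equip $R := k(y)[Z]$ with the standard $\pZ$-grading $\deg Z = 1$, and regard $h := Z^c + y^b\in R$; its top-degree homogeneous summand is $\widehat{h} = Z^c$. Because $gcd(a,c) = 1$, both $X^a + h$ and $X^a + \widehat{h}$ are irreducible in $k(y)[X,Z]$, so the lemma preceding \thref{graded} (applied with this $R$ and $r = a$) provides an admissible $\Z$-filtration on $\widetilde{B}$ such that $\gr\widetilde{B} \cong A := k(y)[X,Z]/(X^a + Z^c)$. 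Then \thref{Theorem: homogeneization} transports the non-trivial exponential map on $\widetilde{B}$ to a non-trivial one on $A$.

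The main technical step is to show that $A$ is rigid. By \thref{Remark 2}(i) it suffices to check this for $A\otimes_{k(y)}\overline{k(y)} = \overline{k(y)}[X,Z]/(X^a + Z^c)$, a $1$-dimensional integral domain (irreducibility following again from $gcd(a,c)=1$) over the algebraically closed field $\overline{k(y)}$. Since $a, c\geq 2$, the Jacobian of $X^a + Z^c$ vanishes at the origin, so the local ring at the maximal ideal $(X,Z)$ has embedding dimension $2$ while Krull dimension $1$; hence it is not a DVR, so the ring is not normal and in particular cannot be isomorphic to $\overline{k(y)}^{[1]}$. Then \thref{lemma : properties}(v) forbids any non-trivial exponential map on this ring — the contradiction we sought. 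The case $z\in\B$ follows by interchanging $y$ and $z$.
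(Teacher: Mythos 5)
Your proof is correct. The case $x\in\B$ is essentially the paper's own argument: the same factoring of $y^b+z^c$ through a Frobenius power followed by \thref{Mini Mason 1} or \thref{Mini Mason 2}, with the exclusion of $R_3$ used exactly where you use it (to rule out $b,c$ both being pure $p$-powers and to force $r_b>r_c$); your split according to the prime-to-$p$ parts $s_b,s_c$ is just a reorganization of the paper's split on whether $p$ divides both $b$ and $c$. Where you genuinely diverge is the case $y\in\B$ (and its mirror $z\in\B$). The paper distinguishes whether $p\mid c$: if $p\nmid c$ it applies the Mini Mason lemmas to $x^a+z^c$ directly, and if $p\mid c$ it passes to $\overline{k(y)}[X,Z]/(X^a+Z^c+y^b)$, rewrites $Z^c+y^b=(Z^{s_3}-\beta)^{p^e}$ and exhibits a point where the defining polynomial lies in the square of the maximal ideal --- a computation that only produces a singularity because $p\mid c$. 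You instead homogenize first, using the technical lemma of Section \ref{AS} to degenerate $X^a+Z^c+y^b$ to the monomial curve $X^a+Z^c$, which is non-normal for all $a,c\geq 2$ with $\gcd(a,c)=1$ in any characteristic; this eliminates the case split entirely and is in the same spirit as the proof of \thref{Cor}. What the paper's route buys is that it stays entirely within elementary manipulations once the Mini Mason lemmas are available; what yours buys is uniformity and a shorter argument. The only point worth flagging is cosmetic: the technical lemma is stated for a finitely generated $k$-domain $R$, and you apply it with $R=k(y)[Z]$ over the ground field $k(y)$. This is harmless, since the lemma is insensitive to the choice of ground field and the paper itself invokes its Section \ref{AS} results over function fields in \thref{prop 4}, but it deserves a sentence of justification.
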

	\begin{proof}
		Since $gcd(a,b) = 1,~z$ is prime in $B$ and since $gcd(a,c) = 1$, $y$ is prime in $B$.
		Let $\phi$ be a non-trivial exponential map on $B$.
		
		Suppose, if possible, that  $x\in \B$. Then $y^b +z^c\in\B\setminus\{0\}$. We show that both $y$ and $z$ are in $\B$, which will contradict the fact that $\phi$ is non-trivial.\smallskip
		
		\noindent
		$\boldsymbol{Case \, I :}$ Suppose $p$ does not divide one of $b$ and $c$, say $p\nmid c$. Then, by \thref{Mini Mason 1} or \thref{Mini Mason 2}, according as $p\nmid b$ or $p\mid b$, $y,z\in\B$.
		\smallskip
		
		\noindent
		$\boldsymbol{Case \, II :}$
		Suppose $p$ divides both $b$ and $c$. 
		Let $b = s_2p^r$ and $ c = s_3p^e$ for some $s_2,s_3,e,r\in \N$ with $e \geq r$ and $p\nmid s_2s_3$. Now $y^b +z^c = (y^{s_2} + z^{s_3p^{e-r}})^{p^r}\in\B\setminus\{0\}$, hence by \thref{lemma : properties}(i), $y^{s_2} + z^{s_3p^{e-r}}\in \B\setminus\{0\}$. If $e>r$ then $s_2 > 1$, for $(a,s_2p^r,s_3p^e)\notin R_3$, and hence by \thref{Mini Mason 2}, $y,z\in\B$. Next when $e=r$ then
		$s_2,s_3\geq 2$, since $(a,s_2p^r,s_3p^e)\notin R_3$. Then  by \thref{Mini Mason 1}, $y,z\in\B$. 
		
		Hence $x \notin B^{\phi}$. Now suppose, if possible, that $y\in\B$. Then $x^a + z^c\in \B\setminus\{0\}$.
		
		If $p\nmid c$ then, by \thref{Mini Mason 1} or \thref{Mini Mason 2}, according as $p\nmid a$ or $p\mid a$, $x,z\in\B$. But $\phi$ is non-trivial hence $p\mid c$. Then $p\nmid a$ since $gcd(a,c) = 1$.
		Let $c = s_3p^e$ for some $s_3,e\in \N$ with $p\nmid s_3$. By \thref{lemma : properties}(vi) and \thref{Remark 2}(i), $\phi$ induces a non-trivial exponential map  on the integral domain $\widetilde{B} := B\otimes_{k[y]}\overline{k(y)} = \overline{k(y)}[X,Z]/(X^a + Z^c +y^b).$ 
		Since $\td_{\overline{k(y)}}{\widetilde{B}} = 1$, by \thref{lemma : properties}(v) $\widetilde{B}=
		\overline{k(y)}^{[1]}$.
		Now consider the irreducible polynomial,
		$$\begin{array}{lll}
			F(X,Z) & := & X^a + Z^c + y^b \in \overline{k(y)}[X,Z]\\
			& = & X^a + (Z^{s_3} - \beta)^{p^e}, \text{~for some~}\beta\in \overline{k(y)}.
		\end{array}$$
		Let  $\gamma$ be a root of $Z^{s_3} - \beta$. Then $M := (X, Z-\gamma)$ is a maximal ideal of $\overline{k(y)}[X,Z]$ and $F\in M^2$. Thus $\widetilde{B}$ is not a normal domain, in particular $\widetilde{B}\neq
		\overline{k(y)}^{[1]}$. This leads to a contradiction.   
		
		Similarly we can show that $z\notin \B$. Thus our lemma follows. 
	\end{proof} 
	
	\begin{lem}{\thlabel{model lemma}}
		Let $k$ be an algebraically closed field of characteristic $p\geq0$ and $(a,b,c)\in F_3\setminus R_3$ with $gcd(a,bc) = 1$. Then for any non-trivial exponential map $\phi$ on $B := B_{(a,b,c)} = k[x,y,z]$, $y^{b/d} +\mu z^{c/d}\in B^{\widehat{\phi}}$ for some $\mu\in k^*$ and $d = gcd(b,c)$ {\em (for the meaning of $\widehat{\phi}$ please refer to beginning of this section)}.
	\end{lem}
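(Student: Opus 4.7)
The plan is to exploit the homogeneity of $\widehat{\phi}$, the factorial closure of $R := B^{\widehat{\phi}}$, and the algebraic closedness of $k$. By \thref{Theorem: homogeneization}, $\widehat{\phi}$ is a non-trivial homogeneous exponential map on $B$ with respect to the standard $\pZ$-grading, so $R$ is a graded $k$-subalgebra of $B$. By \thref{Lemma 2} applied to $\widehat{\phi}$, none of $x,y,z$ lies in $R$, while non-triviality of $\widehat{\phi}$ yields a nonzero homogeneous $f \in R$ of some positive grading degree $D$.

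First I would show that $f \in k[y,z]$. Each monomial $x^i y^j z^k$ with $0 \leq i < a$ is homogeneous of degree $(ibc + jac + kab)/d$, and the coprimality $\gcd(a,bc) = 1$ forces the congruence $ibc \equiv Dd \pmod a$ to have a unique solution $i = i_D$ in $\{0, 1, \ldots, a-1\}$. Writing $f = x^{i_D} g(y,z)$, if $i_D \geq 1$ then $x \cdot (x^{i_D - 1} g) = f \in R$ with both factors nonzero would, by factorial closure (\thref{lemma : properties}(i)), force $x \in R$, contradicting \thref{Lemma 2}. Hence $i_D = 0$ and $f \in k[y,z]$. The same factorial closure considerations then force $y \nmid f$ and $z \nmid f$ in $k[y,z]$.

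Combining $i_D = 0$ with $\gcd(a,d) = 1$ (which follows from $\gcd(a,bc) = 1$ since $d \mid bc$), I would deduce $a \mid D$, and from the equation $jc' + kb' = D/a$ with $b = db'$, $c = dc'$, $\gcd(b',c') = 1$, that $f$ being divisible by neither $y$ nor $z$ forces $D = abcs/d^2$ for some $s \geq 1$ and places $f$ inside the subring $k[u,v]$ with $u := y^{b/d}$, $v := z^{c/d}$, as a nonzero homogeneous polynomial of degree $s$ (with $u,v$ each of weight one). Since $k$ is algebraically closed, this element factors as $f = c \prod_{j=1}^{s}(u - \beta_j v)$ with $c \in k$; the non-divisibility $z \nmid f$ gives $c \neq 0$ (it is the coefficient of $u^s$), while $y \nmid f$ gives $c(-1)^s \prod_j \beta_j \neq 0$ (it is the coefficient of $v^s$), whence every $\beta_j \in k^*$. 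Iterating factorial closure on $c^{-1} f = \prod_j (y^{b/d} - \beta_j z^{c/d}) \in R$ then forces each factor $y^{b/d} - \beta_j z^{c/d}$ into $R$, and setting $\mu := -\beta_j$ gives $y^{b/d} + \mu z^{c/d} \in R$ with $\mu \in k^*$.

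The main subtlety is the congruence bookkeeping in the second paragraph: one needs $\gcd(a,bc) = 1$ to pin down $i_D$ and then $\gcd(a,d) = 1$ to conclude $a \mid D$ and hence to identify $f$ inside $k[u,v]$. Once $f$ has been relocated in this bivariate subring, the combination of factorization over the algebraically closed field $k$ with iterated factorial closure of $R$ delivers the required invariant without further obstruction.
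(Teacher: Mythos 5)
Your proposal is correct and follows essentially the same route as the paper: both use the standard grading together with $\gcd(a,bc)=1$ to force a homogeneous invariant of $\widehat{\phi}$ into $k[y,z]$, then invoke \thref{Lemma 2} and factorial closedness of $B^{\widehat{\phi}}$ to strip off $x$, $y$, $z$ factors, and finally factor the resulting form over the algebraically closed field to extract $y^{b/d}+\mu z^{c/d}$. The only (immaterial) difference is that you obtain the homogeneous invariant directly from the gradedness of $B^{\widehat{\phi}}$, whereas the paper takes $f\in B^{\phi}\setminus k$ and passes to its leading form $\widehat{f}\in B^{\widehat{\phi}}$ via \thref{Theorem: homogeneization}.
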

	\begin{proof}
		Let $\phi$ be a non-trivial exponential map on $B$ and let $f\in \B \setminus k$. 
		Using the relation $x^a = -(y^b + z^c)$, $f$ can be uniquely written as 
		\begin{center}
			$f = \sum\limits_{i=0}^{a-1}f_i(y,z)x^i$ for some polynomials $f_i(y,z)\in k[y,z]\, $ in $B$.
		\end{center}
		By \thref{Lemma 2} $x\notin\B$. Therefore $f_0(y,z)\notin k$. Now we consider the standard $\pZ$-graded structure on $B$, given by 
		\begin{center}
			$wt(x) = bc/d$,  $wt(y) = ac/d$,  $wt(z)= ab/d$.
		\end{center}
		Let $\deg$ denote the degree function corresponding to the weight $wt$.
		
		If $\deg(x^{i_1}y^{j_1}z^{l_1}) = \deg(x^{i_2}y^{j_2}z^{l_2})$ for some $i_1,i_2,j_1,j_2,l_1,l_2\in \pZ$ with $0\leq i_1< i_2< a$. Then $((j_1 - j_2)(c/d) + (l_1 - l_2)(b/d))a = (i_2 - i_1)(bc/d)$, which implies that $a|(i_2 - i_1)(bc/d)$. But $gcd(a,bc) = 1$ and $0< i_2 - i_1< a$. Therefore two different powers of $x$ cannot occur in ${\widehat{f}}$.
		
		Thus $\widehat{f} = \widehat{f_i(y,z)}x^i $ for some $i,\, 0\leq i < a$. Now by {\thref{Theorem: homogeneization}}, $\widehat{\phi}$ is non-trivial and $\widehat{f}\in B^{\widehat{\phi}}$. By \thref{Lemma 2} $x\notin \Bh$, therefore
		$\widehat{f}=\widehat{f_0(y,z)}$. 
		Next, $k$ is an algebraically closed field, so $\widehat{f} = \lambda y^rz^m \prod_{l\in \Lambda}^{}(y^{b/d} + \mu_lz^{c/d})$ for some $\lambda,\mu_l \in k^*$, $r,m\in \pZ$ and a finite set $\Lambda\subseteq \pZ$, in $B$.
		
		By \thref{Lemma 2} $y,z\notin \Bh$ so $r = m=0$. If $\widehat{f} $ has two distinct factors, $y^{b/d} + \mu_1 z^{c/d}$ and $y^{b/d} + \mu_2 z^{c/d}$, then $z\in\Bh$, a contradiction. Thus $\widehat{f} = \lambda(y^{b/d} + \mu z^{c/d})^r$ for some $\lambda,\mu\in k^*$ and $r\in\N$. Hence $y^{b/d} + \mu z^{c/d}\in B^{\widehat{\phi}}$ (cf. \thref{lemma : properties}(i)). 
	\end{proof}
	\begin{thm}\thlabel{Theorem 2}
		Let $k$ be a field of characteristic $p\geq 0$ and $(a,b,c)\in F_3\setminus (R_3\cup T_3)$ with $gcd(a,bc) = 1$. Then $B := B_{(a,b,c)} = k[x,y,z]$ is a rigid domain. 
	\end{thm}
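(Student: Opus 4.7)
The plan is to argue by contradiction, leveraging the just-established model lemma. By \thref{Remark 2}(i) it suffices to treat the case that $k$ is algebraically closed. Assume $\phi$ is a non-trivial exponential map on $B$, and let $\widehat\phi$ denote its homogenization with respect to the standard $\pZ$-grading on $B$; by \thref{Theorem: homogeneization}, $\widehat\phi$ is non-trivial. Applying \thref{model lemma} gives $\mu \in k^*$ with
\[
w \;:=\; y^{b'} + \mu\, z^{c'} \;\in\; B^{\widehat\phi},
\]
where $d := \gcd(b,c)$, $b' := b/d$, $c' := c/d$ (so $\gcd(b',c') = 1$), and $y^{b'} = w - \mu z^{c'}$.

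The central algebraic step is substituting $y^b = (w - \mu z^{c'})^d$ into $x^a + y^b + z^c = 0$ and isolating the $w$-divisible part via the identity $(w - \mu z^{c'})^d - (-\mu z^{c'})^d = w\cdot Q(w,z)$ for a suitable $Q \in k[w,z]$, giving
\[
x^a + \bigl((-\mu)^d + 1\bigr)\, z^c \;=\; -w\cdot Q(w,z)\quad\text{in } B.
\]
For $d = 1$ (so $Q \equiv 1$) this becomes $x^a + (1-\mu)\,z^c = -w \in B^{\widehat\phi}$. If $\mu = 1$, then $x^a \in B^{\widehat\phi}$ and factorial closedness (\thref{lemma : properties}(i)) forces $x \in B^{\widehat\phi}$, contradicting \thref{Lemma 2}. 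Otherwise $\mu \neq 1$ and $c_1 x^a + c_2 z^c \in B^{\widehat\phi} \setminus \{0\}$ with $c_1, c_2 \in k^*$. Since $(a,b,c)\notin T_3$ gives $a,c \geq 2$, while $\gcd(a,bc) = 1$ ensures at most one of $a,c$ is a power of $p$, one invokes either \thref{Mini Mason 1} (when neither is) or \thref{Mini Mason 2} (when one is, using the primality of $x$ in $B$ guaranteed by irreducibility of $Y^b + Z^c$ under $\gcd(b,c) = 1$, or the primality of $z$ guaranteed by irreducibility of $X^a + Y^b$ under $\gcd(a,b) = 1$) to conclude $x,z \in B^{\widehat\phi}$, again contradicting \thref{Lemma 2}.

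The main obstacle is the case $d > 1$, in which the factor $w\cdot Q(w,z)$ blocks a direct Mini Mason argument. The strategy is to apply \thref{betta-lemma} to the invariant $w$: first verify that $w - \lambda$ is prime in $B$ for infinitely many $\lambda \in k$ by analysing the presentation
\[
B/(w-\lambda) \;\cong\; k[X,Y,Z]/\bigl(Y^{b'} + \mu Z^{c'} - \lambda,\; X^a + G_\lambda(Z)\bigr),\quad G_\lambda(Z) := (\lambda - \mu Z^{c'})^d + Z^c,
\]
and invoking $\gcd(a,c) = \gcd(b',c') = 1$. Then $\widehat\phi$ descends to a non-trivial exponential map on the $1$-dimensional integral domain $B/(w-\beta)$ for some $\beta \in k$, which by \thref{lemma : properties}(v) combined with \thref{Remark 2}(ii) would force $B/(w-\beta) \cong k^{[1]}$. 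The final contradiction is drawn from \thref{EpiTheorem} applied to the induced epimorphism from $k[X,Z]$ onto the subring $k[x,z]$ of $k[T]$: the relation $x^a + G_\beta(z) = 0$ together with $\deg_Z G_\beta = c$ would force $a\deg_T x = c\deg_T z$, and hence by $\gcd(a,c) = 1$ the $T$-degrees of $x$ and $z$ would have ratio $c : a$; since $a,c \geq 2$ are coprime, neither divides the other, contradicting the divisibility conclusion of \thref{EpiTheorem}. Handling the exceptional subcase where $(-\mu)^d + 1 = 0$ (so $\deg_Z G_\beta < c$) will require an additional argument, exploiting the resulting factorisation $x^a = -w\cdot Q(w,z)$ together with the factorial closedness of $B^{\widehat\phi}$.
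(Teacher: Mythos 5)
Your $d=1$ branch is correct, and it is essentially a repackaging of the paper's easiest case: whenever $b/d,\,c/d\geq 2$ (i.e.\ $d<b\leq c$, not just $d=1$) one can apply \thref{Mini Mason 1} or \thref{Mini Mason 2} \emph{directly} to the invariant $y^{b/d}+\mu z^{c/d}$, using $\gcd(b/d,c/d)=1$ and the primality of $y$ and $z$ (which follows from $\gcd(a,bc)=1$), to get $y,z\in B^{\widehat\phi}$ and an immediate contradiction. You miss this, and instead route all of $d>1$ through the $\beta$-lemma. That causes a genuine failure when $1<d<b$: in $B/(w-\beta)$ the element $y$ only satisfies $y^{b/d}=\beta-\mu z^{c/d}$ and need not lie in $k[x,z]$, so $k[X,Z]\to B/(w-\beta)$ is not surjective and \thref{EpiTheorem} simply does not apply to it (it is a statement about epimorphisms onto $k[T]$, not onto subrings). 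Even where the map is surjective you would still need $p\nmid\gcd(\deg_T x,\deg_T z)$, which your ratio argument does not secure, and you would need to actually prove primality of $w-\lambda$ for infinitely many $\lambda$ (in the paper's corresponding case this reduces to showing $P_\lambda$ is non-constant, which uses $(a,b,c)\notin R_3$ to exclude $1+(-\mu)^b=0$ together with $b=p^r$).

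The more serious problem is that the case you defer to ``an additional argument'' --- $d=b=c$ with $(-\mu)^b+1=0$ --- is not a loose end but the core of the theorem. Your proposed tool there, the factorisation $x^a=-w\,Q(w,z)$ plus factorial closedness, yields nothing: you know $w\in B^{\widehat\phi}$ but not $w\,Q(w,z)\in B^{\widehat\phi}$ (as $Q$ involves $z$), so you cannot conclude $x\in B^{\widehat\phi}$. The paper's treatment of this case requires, beyond degree counting: the Jacobian criterion to show $\widetilde B$ is not normal when $\deg_Z P_\beta'\geq 1$; Asanuma's classification of purely inseparable forms of $\mathbb{A}^1$ to handle the residual subcase $a=p^m$, $b=c=p^r+1$ (where $B$ becomes $k[X,Y_1,Z_1]/(X^{p^m}+Y_1^{p^r+1}+Y_1^{p^r}Z_1+Y_1Z_1^{p^r})$ and one shows the fibre is a non-trivial $\mathbb{A}^1$-form over $k(y_1)$); and, for $p\nmid ab$, a reduction via \thref{Stably rigid 1} to $a=2$, $b=c=3$ followed by a completing-the-square argument when $\mu^3=1$. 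None of this is recoverable from your outline, so the proposal as written does not constitute a proof.
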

	\begin{proof}
		In view of \thref{Remark 2}(i), we assume $k$ to be an algebraically closed field. 	
		Suppose, if possible, that there exists a non-trivial exponential map $\varphi$ on $B$.
		By \thref{model lemma}, $y^{b/d} + \mu z^{c/d}\in B^{\widehat{\varphi}}\setminus \{0\}$ for some $\mu\in k^{*}$ and
		$d= gcd(b,c)$. Note that $gcd(a,bc) = 1$ so $y,z$ are prime in $B$.
		For convenience let $b\leq c$.
		Next we consider different cases and arrive at contradiction in each of them and hence it will follow that $B$ is a rigid domain.
		\smallskip
		\noindent
		$\boldsymbol{Case \, I :} d < b <c$.\\
		Then $b/d,c/d \geq 2$ 
		and since $y,z$ are primes in $B$, by {\thref{Mini Mason 1}} or {\thref{Mini Mason 2}} it follows that $y,z\in B^{\widehat{\varphi}}$. Thus $\widehat{\varphi}$ is a trivial exponential map, a contradiction.\smallskip 
		
		\noindent
		$\boldsymbol{Case\,II :}d = b < c.$\\
		Then $y + \mu z^{\frac{c}{b}} \in B^{\widehat{\varphi}}$ for some $\mu \in k^*$. Now for every $\lambda$ in $k^{*}$ 
		$$
		\frac{B}{(y+\mu z^{c/b} - \lambda)B} = \frac{k[X,Y,Z]}{(X^a + Y^b + Z^c, Y+\mu Z^{c/b} -\lambda )}\cong \frac{k[X,Z]}{(X^a + P_{\lambda}(Z^{c/b}))},
		$$
		where 		\begin{equation}
			P_{\lambda}(T) := (1 + (-\mu)^b)T^b + {b\choose {b-1}}\lambda (-\mu T)^{b-1}+\dots + {b\choose 1}\lambda^{b-1}(-\mu T) + \lambda^b\in k[T].	
		\end{equation}
		Then $P_{\lambda}(T)\in k^{*}$ if and only if $1+(-\mu)^b =0$ and $b = p^r,$ for some $r\in \N.$ But $(a,b,c)\notin R_3$ and hence $y + \mu z^{c/b} - \lambda$ is a prime element of $B$ for all $\lambda\in k^{*}.$ 
		Therefore  by \thref{betta-lemma}, there exists  a $\beta\in k^*$ such that $\widehat{\varphi}$ induces a non-trivial exponential map on the integral domain $\widetilde{B}:= B/(y+\mu z^{c/b} - \beta)B$.
		Now
		$$\widetilde{B} \cong \frac{k[X,Z]}{(X^a + P_{\beta}(Z^{c/b}))}.$$
		Since $\td_k\widetilde{B}$ = 1 hence by \thref{Remark 2}(ii), $\widetilde{B} = k^{[1]}$. Thus $P_{\beta}(Z^{c/b})$ should have a linear term in $Z$. This is not possible as $c/b >1$.\smallskip 
		
		\noindent
		$\boldsymbol{Case \, III:} d = b = c$ and $p| b$.\\
		As in Case II, we get $\mu,\, \beta \in k^*$ such that $\widehat{\varphi}$ induces a non-trivial exponential map on the integral domain $\widetilde{B}:= B/(y+\mu z - \beta)B$ and
		$$\widetilde{B} \cong \frac{k[X,Z]}{(X^a + P_{\beta}(Z))}= k^{[1]}.$$
		
		\noindent
		But $p|b$ hence $P_{\beta}(Z)$ does not have a linear term. Therefore, $\widetilde{B} \neq k^{[1]}$  a contradiction.\smallskip
		
		\noindent
		$\boldsymbol{Case \, IV:} d = b =c,\, p\nmid b$ and $p|a$.\\
		Following Case II, we get $\mu,\, \beta\in k^{*}$ such that $y+\mu z\in B^{\widehat{\varphi}}$ and $$\widetilde{B} := B/(y+\mu z - \beta)B \cong \frac{k[X,Z]}{(X^a + P_{\beta}(Z))}= k^{[1]}.$$
		Now $b = c \geq 3$ (as $(a,b,c)\not\in T_3$) and $p\nmid b$ therefore $\deg_ZP_{\beta}(Z) \geq 2$. When $p\nmid (b-1)$ then the coefficient of $Z^{b-2}$ in $P_{\beta}^{'}(Z)$ is non-zero. When $b = sp^r +1$, for some $s,r\in\N$ with $s>1$ and $p\nmid s $ then the coefficient of $Z^{(s-1)p^r}$ in $P_{\beta}^{'}(Z)$ is non-zero. Thus $\deg_ZP_{\beta}'(Z) \geq 1$ except when $1+(-\mu)^b = 0$ and $b=c=p^r + 1$, for some $r\in\N$. Now when $\deg_ZP_{\beta}'(Z) \geq 1$ we can find $a_1, a_2\in k$ such that $P_{\beta}'(a_2) = 0$ and $a_1^a + P_{\beta}(a_2) = 0$. Therefore by Jacobian criterion $\widetilde{B}$ is not normal hence cannot be $k^{[1]}$, a contradiction.
		
		Let us now consider the case when  $1 + (-\mu)^b = 0$ and $b=c=p^r + 1$ for some $r\in\N$. Let $a= s_1p^m$ for some $s_1,m\in \N$ and $p   \nmid s_1$. Now when $p>2$ and $s_1>1$, then $1/s_1+1/(p^r+1) +1/(p^r+1) \leq \frac{1}{2} + \frac{1}{4} + \frac{1}{4} = 1$ and when $p=2$ with $s_1\geq 3$ then $1/s_1+1/(p^r+1) +1/(p^r+1) \leq \frac{1}{3} + \frac{1}{3} + \frac{1}{3} = 1$, hence by \thref{Stably rigid 1}, $B$ is rigid. Thus the only remaining case is when $s_1 = 1$. Suppose $s_1=1$. 
		
		Let $Z_1= -\mu Z$ and $Y_1= Y + \mu Z = Y - Z_1$, then
		$$\begin{array}{lll}
			X^{p^m} + Y^{p^r +1} + Z^{p^r + 1} &= &X^{p^m} + (Y_1+  Z_1)^{p^r + 1} - (Z_1)^{p^r +1}, \text{~since~}(-\mu)^{p^r +1} = -1\\					
			&= &X^{p^m} + Y_1^{p^r + 1} + Y_1^{p^r}Z_1 + Y_1 Z_1^{p^r}. \\ 
		\end{array}$$
		Therefore $B =B_{(p^m,p^r+1,p^r+1)} \cong k[X,Y_1,Z_1]/(X^{p^m} + Y_1^{p^r + 1} + Y_1^{p^r}Z_1 + Y_1 Z_1^{p^r})=k[x,y_1,z_1] $, where $x,y_1 (=y+\mu z),z_1 (= -\mu z)$ denote the images of $X,Y_1,Z_1$ in $B$  respectively. Now $y_1\in B^{\widehat{\varphi}}$ hence by \thref{lemma : properties}(vi), $\widehat{\varphi}$ induces a non-trivial exponential map on $B_1:= k(y_1)[X,Z_1]/(X^{p^m} +  y_1 Z_1^{p^r} + y_1^{p^r}Z_1 + y_1^{p^r + 1}).$ Next $B_1\otimes_{k(y_1)}\overline{k(y_1)} = \overline{k(y_1)}^{[1]}$, by \cite[Proposition 4.2]{As} and $\td_{k(y_1)}B_1=1$. Therefore by \thref{Remark 2}(ii), $B_1 = {k(y_1)}^{[1]}$. But according to \cite[Remark 4.5]{As}, $B_1$ is a non-trivial $\mathbb{A}^1$-form over $k(y_1)$ (i.e. $B_1\neq k(y_1)^{[1]}$). This is a contradiction.\smallskip
		
		\noindent
		$\boldsymbol{Case \, V:} d = b =c,\, p\nmid ab$.\\
		By \thref{Stably rigid 1}, if $(\frac{1}{a} + \frac{1}{b} + \frac{1}{c}) \leq 1$ then $B$ is rigid.
		So the remaining case is when $a =2$ and $b = c =3$.
		As in Case II, we get $\mu,\, \beta \in k^*$ such that $$\widetilde{B}:=  B/(y+\mu z -\beta)\cong \frac{k[X,Z]}{(X^2 + P_{\beta}(Z))}= k^{[1]},$$
		with $P_{\beta}(Z) = (1  -\mu^3)Z^3 + 3\beta \mu^2 Z^2- 3\beta^{2}\mu Z + \beta^3\in k[Z]$. 
		
		If $1 - \mu^3 \neq 0$, then $\deg_Z(P_{\beta}(Z)) = 3$. Now since $\widetilde{B} = k^{[1]}$, there exists a surjective $k$-algebra homomorphism, $\varPsi:k[X,Z]\rightarrow k[T](= k^{[1]})$ such that $\deg_T(\varPsi(X)) =\deg_Z(P_{\beta}(Z)) = 3$ and $\deg_T(\varPsi(Z)) = 2$. So all the conditions of Epimorphism theorem (cf. \thref{EpiTheorem}) are satisfied. But $gcd(2,3) = 1$, which leads to a contradiction. So $\mu^3 = 1$. 
		Therefore $X^2 + P_{\beta}(Z) = X^2 + 3\beta \mu^2 Z^2- 3\beta^{2}\mu Z + \beta^3 = X^2 + 3\beta(\mu Z -\beta/2 )^2 + \beta^3/4$. Then $X^2 + P_{\beta}(Z)$ has no linear term in the new co-ordinates $X$ and $Z_1:=(\mu Z -\beta/2)$  and hence $\widetilde{B}\neq k^{[1]}$. This leads to a contradiction.
	\end{proof}
	
	Now we consider all those Pham-Brieskorn domains $ B_{(a,b,c)}$ over a field $k$ of characteristic $p\geq 0$ such that $p\nmid abc$. 
	
	\begin{thm}\thlabel{p not dividing}
		Let $k$ be a field of characteristic $p\geq 0$. Then $B_{(a,b,c)}$  is rigid when $(a,b,c)\in F_3\setminus T_3$ with $p\nmid abc$.
	\end{thm}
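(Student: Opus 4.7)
The plan is to reduce this theorem entirely to results already proved in the excerpt, namely \thref{Stably rigid 1} and \thref{Theorem 2}, via the case analysis already anticipated in \thref{Remark 7}. Since $p \nmid abc$, the hypothesis $(a,b,c) \in F_3$ is automatic, and moreover no coordinate is a positive power of $p$, so the condition $(a,b,c) \notin T_3$ together with $p \nmid abc$ already forces $(a,b,c) \notin R_3$ (the $R_3$ condition demands either a coordinate equal to $1$ or two coordinates that are multiples of $p$, both of which are excluded). Thus it suffices to verify rigidity for triples in $F_3 \setminus (R_3 \cup T_3)$ with $p \nmid abc$.

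The first step is to dispose of the case $\tfrac{1}{a} + \tfrac{1}{b} + \tfrac{1}{c} \leq 1$ by a direct application of \thref{Stably rigid 1} (with $r = e = 0$), which yields stable rigidity and hence rigidity of $B_{(a,b,c)}$. The remaining cases fall under $\tfrac{1}{a} + \tfrac{1}{b} + \tfrac{1}{c} > 1$, and as noted in \thref{Remark 7}, assuming WLOG $a \leq b \leq c$ and excluding $(a,b,c) \in T_3$, the only triples left are $(2,3,3)$, $(2,3,4)$, and $(2,3,5)$.

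The second step is to handle each of these three triples using \thref{Theorem 2}, for which the key hypothesis is that some coordinate is coprime to the product of the other two. Since $B_{(a,b,c)}$ is invariant under permutations of $(a,b,c)$, I would verify in each case:
\begin{itemize}
\item For $(2,3,3)$: $\gcd(2, 3 \cdot 3) = 1$, so \thref{Theorem 2} applies directly.
\item For $(2,3,4)$: the permutation $(3,2,4)$ satisfies $\gcd(3, 2 \cdot 4) = 1$, so \thref{Theorem 2} applies.
\item For $(2,3,5)$: $\gcd(2, 3 \cdot 5) = 1$, so \thref{Theorem 2} applies.
\end{itemize}
In each instance the triple still lies in $F_3 \setminus (R_3 \cup T_3)$ after the permutation, so the hypotheses of \thref{Theorem 2} are met and rigidity follows.

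There is essentially no technical obstacle: once \thref{Stably rigid 1}, \thref{Theorem 2}, and the small-case reduction of \thref{Remark 7} are available, the result is a bookkeeping exercise in verifying that the three exceptional triples can be reordered so that one coordinate is coprime to the product of the other two. The only minor point requiring care is the observation that $p \nmid abc$ together with the exclusion of any $a_i = 1$ automatically rules out the $R_3$ condition, so no separate argument is needed to ensure $(a,b,c) \notin R_3$ before invoking \thref{Theorem 2}.
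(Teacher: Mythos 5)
Your proposal is correct and follows exactly the paper's own route: the paper's proof is the one-line "By Remark \ref{Remark 7} and Theorem \ref{Theorem 2} our theorem follows," where Remark \ref{Remark 7} contains precisely your reduction (dispose of $\frac1a+\frac1b+\frac1c\le 1$ via Theorem \ref{Stably rigid 1}, leaving only $(2,3,3)$, $(2,3,4)$, $(2,3,5)$, each of which has a coordinate coprime to the product of the other two). You have merely spelled out the coprimality checks and the observation that $p\nmid abc$ with no coordinate equal to $1$ rules out $R_3$, which the paper leaves implicit.
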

	\begin{proof}
		By \thref{Remark 7} and \thref{Theorem 2} our theorem follows.	
	\end{proof}	
	
	Next we turn our attention to all those Pham-Brieskorn domains $B_{(a,b,c)}$ over a field $k$ of characteristic $p>0$, where only one among $a,b,c$ has $p$ as a factor. 
	
	\begin{thm}\thlabel{Theorem 3}
		Let $k$ be a field of characteristic $p>0$ and $B := B_{(a,b,c)}$, for $c = sp^e$ for some $a,b,s,e\in \N$ with $p\nmid abs$.
		Then $B$ is rigid when $(a,b,c)\notin (T_3\cup S_3)$. 
	\end{thm}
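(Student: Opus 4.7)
I would show that every configuration satisfying the hypotheses is already covered by either \thref{Stably rigid 1} or \thref{Theorem 2}. After reducing to algebraically closed $k$ via \thref{Remark 2}(i), observe that $p\nmid a$ puts $(a,b,c)$ in $F_3$, and since only $c$ is divisible by $p$, the second clause of $R_3$ (which requires two elements to be divisible by $p$) cannot hold; together with $(a,b,c)\notin T_3$ this gives $(a,b,c)\in F_3\setminus(R_3\cup T_3)$ automatically.

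If $\tfrac{1}{a}+\tfrac{1}{b}+\tfrac{1}{s}\leq 1$, then \thref{Stably rigid 1} yields (stable) rigidity. So assume henceforth that $\tfrac{1}{a}+\tfrac{1}{b}+\tfrac{1}{s}>1$. Since $p\nmid ab$ we have $\gcd(a,c)=\gcd(a,s)$ and $\gcd(b,c)=\gcd(b,s)$, so if at least two of the three pairwise gcds $\gcd(a,b)$, $\gcd(a,s)$, $\gcd(b,s)$ equal $1$, then one of $\gcd(a,bc)$, $\gcd(b,ac)$, $\gcd(c,ab)$ equals $1$, and \thref{Theorem 2} applied after a suitable permutation of the variables yields rigidity.

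It therefore suffices to verify --- by contrapositive --- that the standing assumptions do force at least two of the pairwise gcds to be $1$. If all three exceed $1$, then a common prime $q$ must divide each of $a,b,s$: otherwise each of $a,b,s$ would be divisible by two distinct small primes, forcing $a,b,s\geq 6$ and hence $\tfrac{1}{a}+\tfrac{1}{b}+\tfrac{1}{s}\leq\tfrac{1}{2}<1$. The inequality then forces $q=2$, and writing $a=2a',b=2b',s=2s'$, the residual condition $\tfrac{1}{a'}+\tfrac{1}{b'}+\tfrac{1}{s'}>2$ pins $(a',b',s')$ (after WLOG) to $(1,1,*)$ or $(1,*,1)$, each of which places $(a,b,c)$ in $T_3\cup S_3$ --- contradicting the hypothesis. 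If exactly one pairwise gcd (say $\gcd(a,b)$) equals $1$, then $\gcd(a,s)$ and $\gcd(b,s)$ contribute two distinct primes dividing $s$, so $s\geq 6$; combined with coprime $a,b\geq 2$, one computes $\tfrac{1}{a}+\tfrac{1}{b}+\tfrac{1}{s}\leq\tfrac{1}{2}+\tfrac{1}{3}+\tfrac{1}{6}=1$, again contradicting $>1$. The other two ``exactly one pairwise gcd equals $1$'' sub-cases are symmetric.

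The main obstacle is the clean arithmetic bookkeeping in the contrapositive step: the case analysis is elementary, but one must verify that the configurations $(1,1,*)$ and $(1,*,1)$ arising in the ``all three $>1$'' sub-case correspond exactly to the excluded families $T_3$ and $S_3$ respectively, and that the hypothesis $(a,b,c)\notin T_3\cup S_3$ is thereby the precise obstruction that --- combined with the inequality $\tfrac{1}{a}+\tfrac{1}{b}+\tfrac{1}{s}>1$ --- keeps the two mechanisms \thref{Stably rigid 1} and \thref{Theorem 2} from both failing simultaneously.
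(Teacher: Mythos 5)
Your proposal is correct and follows essentially the same route as the paper: both reduce the statement to \thref{Stably rigid 1} when $\tfrac{1}{a}+\tfrac{1}{b}+\tfrac{1}{s}\leq 1$ and to \thref{Theorem 2} (after permuting so that one exponent is coprime to the product of the other two) in the remaining cases. The only difference is cosmetic --- the paper disposes of $s=1$ separately and then explicitly enumerates the leftover multisets $\{2,3,3\},\{2,3,4\},\{2,3,5\}$ and $(2,2m+1,2p^e)$, whereas you organize the same finite check as a contrapositive gcd argument --- and your bookkeeping (including the identification of the offending configurations with $T_3$ and $S_3$) checks out.
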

	\begin{proof}	
		If $s = 1$, then $gcd(ab,c) = 1$  and hence by \thref{Theorem 2}, $B$ is rigid.
		So henceforth we assume $s>1$.
		
		If $(\frac{1}{a}+\frac{1}{b} + \frac{1}{s})\leq 1$ then by \thref{Stably rigid 1}, we know that $B$ is rigid.
		
		\noindent
		We consider below the remaining cases. For convenience let $a\leq b$.
		
		\medskip
		
		\noindent
		$\boldsymbol{(a)}$ $\{a,b,s\} = \{ 2,3,5\}$ or $\{2,3,4\}$ or $\{2,3,3\}$. Then as $p\nmid abs$,
		in every case we get two pairs among $a,b,c$, which are co-prime. Therefore by \thref{Theorem 2}, $B$ is rigid.
		
		\smallskip
		
		\noindent
		$\boldsymbol{(b)}$ Otherwise $ a = 2 = s$ and $b $ is an odd integer greater than $1$ ($(a,b,c)\notin S_3$) i.e. $(a,b,c) = (2,2m+1,2p^e)$ for some $m\in \N$. Then $gcd(ac,b) = 1$ since $2\nmid b$ and $p\nmid b$. The result again follows from \thref{Theorem 2}.
	\end{proof}
	
	Next we consider all those Pham-Brieskorn domains $B := B_{(a,b,c)}$ over a field $k$ of characteristic $p> 0$ such that $p$ occurs as factors in $b$ and $c$.
	Let $b = s_2p^r$ and $c= s_3p^e$ for some $s_2,s_3,e,r\in\N$, $p\nmid as_2s_3$ and $e\geq r$.
	
	\begin{lem}\thlabel{Remark 5}	
		Suppose $\phi$ is an exponential map on $B = k[x,y,z]$. 
		Then $x,y^{s_2} + z^{s_3p^{e-r}}\in \B$.
	\end{lem}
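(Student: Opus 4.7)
The plan is to reduce to the rigidity of the one-dimensional subring $A := k[x, w] \subseteq B$, where $w := y^{s_2} + z^{s_3 p^{e-r}}$.

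First, since $b = s_2 p^r$ and $c = s_3 p^e$ with $e \geq r$, the Frobenius identity in characteristic $p$ gives $y^b + z^c = (y^{s_2} + z^{s_3 p^{e-r}})^{p^r} = w^{p^r}$, so the defining relation of $B$ rewrites as
\[x^a + w^{p^r} = 0 \quad \text{in } B.\]
Applying $\phi$ to this equation yields $\phi(x)^a + \phi(w)^{p^r} = 0$ in $B[U]$. Since $p \nmid a$ we have $\gcd(a, p^r) = 1$, so $X^a + W^{p^r}$ is irreducible in $k[X, W]$, and hence $A \cong k[X, W]/(X^a + W^{p^r})$ is a one-dimensional $k$-domain. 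The affine curve $\operatorname{Spec} A$ has a single singular (cuspidal) point at the origin: in characteristic $p$, $\partial_W(X^a + W^{p^r}) = p^r W^{p^r - 1} \equiv 0$, while $\partial_X = a X^{a-1}$ vanishes only at $X = 0$, which combined with the defining relation forces $W = 0$. Thus $A$ is not normal and in particular $A \not\cong \widetilde{k}^{[1]}$; by \thref{lemma : properties}(v), every exponential map on $A$ is trivial.

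Hence it suffices to show that $\phi$ restricts to an exponential map on $A$, i.e., $\phi(x), \phi(w) \in A[U]$; this would force $\phi|_A$ trivial, giving $\phi(x) = x$ and $\phi(w) = w$. Working in $K[U]$ with $K := \operatorname{Frac}(B)$ (a UFD), the coprimality $\gcd(a, p^r) = 1$ together with $\phi(x)^a = -\phi(w)^{p^r}$ produces, by matching irreducible factorizations, a unique $T \in K[U]$ with $T(0) = 1$ such that $\phi(x) = x\, T^{p^r}$ and $\phi(w) = w\, T^a$. The remaining target is then $T = 1$. Writing $T = 1 + t_1 U + t_2 U^2 + \cdots$, the characteristic-$p$ expansion $T^{p^r} = 1 + t_1^{p^r} U^{p^r} + t_2^{p^r} U^{2p^r} + \cdots$ combined with $\phi(x) \in B[U]$ gives the divisibility conditions $x\, t_i^{p^r} \in B$; likewise $\phi(w) \in B[U]$ and $a \in k^*$ yield $wt_1 \in B$. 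Combining with $w^{p^r} = -x^a$ gives $x^{a-1} \mid (wt_1)^{p^r}$ in $B$, and since $B/xB \cong k[y, z]/(w^{p^r})$ is non-reduced with nilradical $(w) \bmod x$, it follows that $wt_1 \in (x, w)B$.

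The main obstacle is iterating these divisibility arguments on all higher coefficients $t_i$ of $T$---using the full set of exponential-map relations $\phi^{(i)}\phi^{(j)} = \binom{i+j}{i}\phi^{(i+j)}$ together with the inductive vanishing $\phi^{(j)}(x) = 0$ for $1 \leq j \leq p^r - 1$, which follows from the Leibniz rule applied to $x^a + w^{p^r} = 0$ using that $\phi^{(j)}(w^{p^r}) = 0$ whenever $p^r \nmid j$ (a consequence of Frobenius applied to $\phi(w)^{p^r}$)---to force $T = 1$. The chief technical difficulty is that $x$ is not prime in $B$, so the standard factorial-closure arguments used in analogous characteristic-zero proofs are unavailable; one must exploit the characteristic-$p$ sparsity $\phi(x) \in B[U^{p^r}]$ in conjunction with the iterated-derivation structure of $\phi$ to control the higher $t_i$.
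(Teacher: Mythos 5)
Your reduction---set $w:=y^{s_2}+z^{s_3p^{e-r}}$, note $x^a+w^{p^r}=0$, observe that $A:=k[x,w]\cong k[X,W]/(X^a+W^{p^r})$ is a one-dimensional non-normal domain and hence rigid by \thref{lemma : properties}(v), and conclude that $\phi$ fixes $x$ and $w$ \emph{provided} it restricts to $A$---is sound as far as it goes, and the factorization $\phi(x)=xT^{p^r}$, $\phi(w)=wT^a$ in $\operatorname{Frac}(B)[U]$ is correctly extracted from $\gcd(a,p^r)=1$. But the proof has a genuine gap exactly where you flag it: nothing you establish forces $T=1$. Since $A$ is rigid, ``$\phi$ restricts to $A$'' is logically equivalent to the conclusion of the lemma, so the reduction buys nothing until that restriction (equivalently $T=1$) is actually proved. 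The divisibility facts you obtain ($xt_i^{p^r}\in B$, $wt_1\in(x,w)B$) only say the coefficients of $T$ are nearly integral; they provide no mechanism for their vanishing, and the iteration over the higher $t_i$ is explicitly left as an unresolved ``main obstacle.'' As written, the argument does not prove the statement.

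The missing idea, which is how the paper closes the argument, is to exploit an invariant rather than the images of $x$ and $w$ themselves. Assuming $\phi$ non-trivial (otherwise there is nothing to prove), pick $f\in\B\setminus k$; then $B\otimes_{k[f]}k(f)$ carries an induced non-trivial exponential map and has transcendence degree one over $k(f)$, hence equals $L^{[1]}$ by \thref{lemma : properties}(v) and in particular is \emph{normal}. Since $B_{\p}$ fails to be normal for each height-one prime $\p=(x,h)$ with $h$ an irreducible factor of $w$, each such $\p$ must meet $k[f]$ non-trivially, say in $(f-\lambda_{\p})$. Multiplying over all irreducible factors of $w$ and using $w^{p^r}=-x^a$ shows that a power of $\prod_i(f-\lambda_{\p_i})$ lies in $xB$; factorial closedness of $\B$ then gives $x\in\B$, and hence $w\in\B$. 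This normality-versus-singular-locus argument (or something of comparable strength) is what your proposal still needs in order to kill $T$.
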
		
	\begin{proof}
		Note that if $\phi_1:=\phi\otimes id$ is the extension of $\phi$ on $B\otimes_k{\overline{k}}$ and $x,y^{s_2} + z^{s_3p^{e-r}}\in (B\otimes_k\overline{k})^{\phi_1}$, then $x,y^{s_2}+ z^{s_3p^{e-r}}\in (B\otimes_k\overline{k})^{\phi_1}\cap B= \B.$ 
		Therefore we assume henceforth $k$ is an algebraically closed field.
		
		Let $\phi$ be a non-trivial exponential map on $B$ and $f\in B^{\phi}\setminus k$. Let $\p = (x,h(y,z))B$, where $h(y,z)$ is an irreducible factor of $y^{s_2}+ z^{s_3p^{e-r}}$. 
		Then $\p$ is a prime ideal of $B$ of height one and $B_{\p}$ is not a normal domain.
		Now by \thref{lemma : properties}(vi), $\phi$ induces a non-trivial exponential map on $B_1 := B\otimes_{k[f]}k(f)$. 
		Since $\td_{k(f)}B_1 = 1$, by \thref{lemma : properties}(v), $B_1 = L^{[1]}$, where $L$ is the algebraic closure of $k(f)$ in $B_1$. 
		Hence $B_1$ is a normal domain. 
		Therefore $\p B_1 = B_1,$ i.e. $\p \cap k[f]\neq (0)$. 
		Hence $\p\cap k[f] = (f-\lambda_{\p})k[f]$ for some $\lambda_{\p}\in k.$ 
		Let $f = \sum_{i=0}^{a-1}f_i(y,z)x^i$ for some polynomials $f_i(y,z)\in k[y,z]$. 
		Then as $f-\lambda_{\p}\in \p$, it follows that $\theta_{\p}:= f_0 -\lambda_{\p}\in (h)B.$ 
		Thus if $h_1,h_2,\dots, h_n$ are  all the distinct irreducible factors of $y^{s_2}+ z^{s_3p^{e-r}}$ for some $n\in \N$ then we obtain $\theta_{\p_1}, \theta_{\p_2}, \dots,\theta_{\p_n}\in k[y,z]$ such that $\prod_{i=1}^{n}\theta_{\p_i}\in (h_1h_2\dots h_n)B $. 
		Therefore there exists an integer $m >0$ such that 
		\begin{equation}\label{9}
			(\prod_{i=1}^{n} f_0-\lambda_{\p_i})^m =(\prod_{i=1}^{n}\theta_{\p_i})^{m}\in ((y^{s_2} + z^{s_3p^{e-r}})^{p^r})B\subseteq (x)B.
		\end{equation}
		Hence $(\prod_{i=1}^{n} f-\lambda_{\p_i})^{m}\in (x)B$, by (\ref{9}). 
		Since  $(\prod_{i=1}^{n} (f-\lambda_{\p_i}))^{m}\in \B$ so $x\in\B$ (cf. \thref{lemma : properties}(i)) and hence $y^{s_2} + z^{s_3p^{e-r}}\in \B$.	
	\end{proof}	
	
	\begin{lem}\thlabel{a=2}
		Let $k$ be a field of characteristic $p>0$. 
		Let $B_1 = B_{(2,2mp^r,s_3p^e)}$, where $m,s_3,r,e\in\N$, $e\geq r$,  $p\nmid 2ms_3$ and $(2,2mp^r,s_3p^e)\in F_3\setminus R_3.$ 
		Then $B_1$ is rigid.
	\end{lem}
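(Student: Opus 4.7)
The plan is to suppose that $\phi$ is a non-trivial exponential map on $B_1$ and derive a contradiction. By \thref{Remark 2}(i), I first reduce to the case where $k$ is algebraically closed, so that in particular $i := \sqrt{-1} \in k$. The starting invariants come from \thref{Remark 5}: $x \in B_1^\phi$ and $w := y^{2m} + z^{s_3 p^{e-r}} \in B_1^\phi$. The argument then bifurcates on the parity of $s_3$.

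If $s_3 = 2t$ is even, I would factor $w = (y^m + iz^{tp^{e-r}})(y^m - iz^{tp^{e-r}})$ inside $B_1$. Factorial closedness of $B_1^\phi$ (\thref{lemma : properties}(i)) places both factors in $B_1^\phi$, so $y^m$ and $z^{tp^{e-r}}$ lie in $B_1^\phi$; a second application of factorial closedness to $y^m = y\cdot y^{m-1}$ and $z^{tp^{e-r}} = z\cdot z^{tp^{e-r}-1}$ then puts $y$ and $z$ in $B_1^\phi$, contradicting the non-triviality of $\phi$.

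If $s_3$ is odd, I would apply \thref{gsubring criterion} to restrict $\phi$ to an exponential map $\varphi$ on $A := k[x, y^{p^r}, z^{p^r}] \cong B_{(2,\, 2m,\, s_3 p^{e-r})}$; this $\varphi$ is necessarily non-trivial, since otherwise factorial closedness of $B_1^\phi$ would again force $y, z \in B_1^\phi$. Under the hypothesis $(a,b,c) \notin R_3$, either $s_3 \geq 2$ or $e > r$, so $s_3 p^{e-r} \geq 3$. If $m \geq 2$, the triple $(2,\, 2m,\, s_3 p^{e-r})$ avoids $T_3$ (since $2m \geq 4$ and the odd integer $s_3 p^{e-r}$ cannot equal $2$) and $S_3$ (since $s_3$ is odd and hence $\ne 2$), so \thref{p not dividing} (when $e = r$) or \thref{Theorem 3} (when $e > r$) renders $A$ rigid, contradicting the non-triviality of $\varphi$.

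The hard case will be $m = 1$, where $A \cong k[X, Y_1, Z_1]/(X^2 + Y_1^2 + Z_1^{c'})$ with $c' := s_3 p^{e-r} \geq 3$ odd and $A$ itself is no longer rigid; here I plan to exploit $x \in A^\varphi$ via \thref{betta-lemma}. A short discriminant check will show that $Y_1^2 + Z_1^{c'} + \lambda^2$ is irreducible in $k[Y_1, Z_1]$ for every $\lambda \in k$: its $Y_1$-discriminant is a unit multiple of $Z_1^{c'} + \lambda^2$, which is square-free when $p \nmid c'$, equals an odd $p^{e-r}$-th power $(Z_1^{s_3} + \lambda^{2/p^{e-r}})^{p^{e-r}}$ of a square-free polynomial (using that $k$ is perfect) when $p \mid c'$ and $\lambda \ne 0$, and reduces to $Z_1^{c'}$ with $c'$ odd when $\lambda = 0$; in every case it is not a square. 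Hence $x - \lambda$ is prime in $A$ for every $\lambda$, and \thref{betta-lemma} produces a $\beta \in k$ together with a non-trivial exponential map on $A/(x - \beta)A \cong k[Y_1, Z_1]/(Y_1^2 + Z_1^{c'} + \beta^2)$. By \thref{Remark 2}(ii), this one-dimensional quotient would have to equal $k^{[1]}$. The crux of the argument, and the main obstacle I foresee, will be to rule this out in all three subcases: for $\beta \ne 0$ and $p \nmid c'$ the quotient is a smooth plane curve of genus $(c'-1)(c'-2)/2 \geq 1$; for $\beta \ne 0$ and $p \mid c'$ the Jacobian criterion reveals honest singular points on $\{Y_1 = 0\}$; and for $\beta = 0$ the ring identifies with the semigroup algebra $k[T^2, T^{c'}] \subsetneq k[T]$ via the cuspidal parametrisation $Y_1 = iT^{c'},\ Z_1 = T^2$. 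Since none of these is $k^{[1]}$, the final contradiction follows, and $B_1$ is rigid.
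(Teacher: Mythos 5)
Your proposal is correct, and its even-$s_3$ branch is exactly the paper's argument, but your odd-$s_3$ branch takes a genuinely different and much longer route. The paper observes that when $s_3$ is odd, $\gcd(2,s_3p^e)=1$ makes $y$ a prime element of $B_1$, and then applies \thref{Mini Mason 1} (if $e=r$, where $s_3\geq 2$ is forced and $p\nmid 2ms_3$ means neither exponent is a $p$-power) or \thref{Mini Mason 2} (if $e>r$, writing $z^{s_3p^{e-r}}=(z^{s_3})^{p^{e-r}}$) directly to the invariant $y^{2m}+z^{s_3p^{e-r}}\in B_1^{\phi}$ supplied by \thref{Remark 5}; this yields $y,z\in B_1^{\phi}$ in one stroke, uniformly in $m$. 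You instead descend via \thref{gsubring criterion} to $A\cong B_{(2,2m,s_3p^{e-r})}$, quote \thref{p not dividing} or \thref{Theorem 3} for $m\geq 2$ (legitimate --- those results precede this lemma, so there is no circularity), and are then forced into a separate analysis of the non-rigid case $m=1$ through \thref{betta-lemma} and the curves $Y_1^2+Z_1^{c'}+\beta^2$. That analysis does reach the right conclusion, but it costs a page where the paper spends two lines, and it contains one inaccuracy: for $\beta\neq 0$, $p\nmid c'$ and $c'\geq 5$ the projective closure of $Y_1^2+Z_1^{c'}+\beta^2=0$ is singular at $[1:0:0]$, so the plane-curve formula $(c'-1)(c'-2)/2$ does not give the genus; the smooth model of this hyperelliptic-type curve has genus $(c'-1)/2$, which is still $\geq 1$ for odd $c'\geq 3$, so non-rationality (hence $\neq k^{[1]}$) survives. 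In short, both proofs work; the paper's exploits primality of $y$ in $B_1$ together with the Mini Mason lemmas to avoid the subring detour entirely, while yours buys nothing extra here but does illustrate how the $m=1$ obstruction can be handled by degenerating along $x-\beta$.
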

	\begin{proof}
		Without loss of generality, we assume, $k$ to be an algebraically closed field (cf. \thref{Remark 2}(i)).
		Let $\phi$ be an exponential map on $B_1$. Then by \thref{Remark 5}, $x, y^{2m} + z^{s_3p^{e-r}}\in \B_1$.
		We will show that $y,z\in \B_1$.
		Then $\phi$ is a trivial exponential map. 
		So $B_1$ is rigid.
		\smallskip
		
		\noindent
		$\boldsymbol{Case \,I:} s_3$ is odd.\\
		Note that if $s_3=1$ then $e>r$ as $(2,2mp^r,s_3p^e)\in F_3\setminus R_3$. Now $gcd(2,s_3p^e) = 1$ hence $y$ is prime in $B_1$.
		Since $y^{2m} + z^{s_3p^{e-r}}\in B_1^{\phi}$, it follows that $y,z\in B_1^{\phi}$, by \thref{Mini Mason 1} or \thref{Mini Mason 2}, according as $e=r$ or $e>r$.
		\smallskip
		
		\noindent 
		$\boldsymbol{Case \,II:} s_3$ is even,
		say $s_3 = 2s_4$, for some $s_4\in\N$ and $i\in k$ be such that $i^2 = -1$.\\
		Now $y^{2m} + z^{2s_4p^{e-r}}\in B_1^{\phi}$. 
		Hence $y^{m} + iz^{s_4p^{e-r}},\, y^{m} - iz^{s_4p^{e-r}}\in B_1^{\phi}$ (cf. \thref{lemma : properties}(i)). 
		Therefore $y,z\in B_1^{\phi}$. 
	\end{proof}
	
	\begin{thm}\thlabel{Prop 3}
		Let $k$ be a field of characteristic $p>0$, $B:= B_{(a,s_2p^r,s_3p^e)}$, where $s_2, s_3,e,r\in \N,\, p\nmid as_2s_3$ and $ e\geq r$. 
		Then $B$ is a rigid domain whenever $(a,s_2p^r,s_3p^e)\in F_3\setminus R_3.$ 
	\end{thm}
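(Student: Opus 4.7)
The plan is to combine the subring restriction of \thref{gsubring criterion} with the rigidity results \thref{p not dividing}, \thref{Theorem 3} and \thref{a=2} already established, augmented in the exceptional cases by a direct factorization argument using a square root of $-1$. By \thref{Remark 2}(i) we may assume $k$ is algebraically closed. Suppose, for a contradiction, that $\phi$ is a non-trivial exponential map on $B=B_{(a,s_2p^r,s_3p^e)}$. Applying \thref{gsubring criterion} with $e_2=r\le e_3=e$, the map $\phi$ restricts to a non-trivial exponential map $\varphi$ on $A:=k[x,y^{p^r},z^{p^r}]$; the identity $x^a+(y^{p^r})^{s_2}+(z^{p^r})^{s_3p^{e-r}}=0$ together with $p\nmid a$ identifies $A$ with $B_{(a,s_2,s_3p^{e-r})}$, and the hypothesis $(a,s_2p^r,s_3p^e)\in F_3\setminus R_3$ easily forces $(a,s_2,s_3p^{e-r})\in F_3\setminus R_3$, and in particular $s_2\ge 2$.

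If $e=r$, then $A\cong B_{(a,s_2,s_3)}$ with $p\nmid as_2s_3$, and \thref{p not dividing} gives rigidity of $A$ whenever $(a,s_2,s_3)\notin T_3$. If $e>r$, then $A\cong B_{(a,s_2,s_3p^{e-r})}$ has only $s_3p^{e-r}$ divisible by $p$, and \thref{Theorem 3} gives rigidity of $A$ whenever $(a,s_2,s_3p^{e-r})\notin T_3\cup S_3$. In either subcase, rigidity of $A$ contradicts non-triviality of $\varphi$, and we are done in the generic case.

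It remains to handle the configurations where $(a,s_2,s_3p^{e-r})\in T_3\cup S_3$. Using that $p\nmid s_2s_3$ (so $p\neq 2$ as soon as some $s_i$ equals $2$) together with $(a,s_2p^r,s_3p^e)\notin R_3$, an enumeration shows that every such configuration falls into one of two categories: either (a) $a=2$ and, after possibly permuting $y$ and $z$, one of $s_2,s_3$ takes the form $2m$ with $m\ge 1$, so that $B$ fits the hypothesis of \thref{a=2} and rigidity follows; or (b) $s_2=s_3=2$ with $a\ge 3$, in which case $p\neq 2$ and hence $i\in k$, and by \thref{Remark 5} we have $y^2+z^{2p^{e-r}}\in B^{\phi}$, which factors as $(y+iz^{p^{e-r}})(y-iz^{p^{e-r}})$; factorial closedness of $B^{\phi}$ (\thref{lemma : properties}(i)) places both factors in $B^{\phi}$, and forming their sum and difference yields $y\in B^{\phi}$ and $z^{p^{e-r}}\in B^{\phi}$, hence $z\in B^{\phi}$, contradicting that $\phi$ is non-trivial. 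The principal obstacle is this last enumeration: one has to verify that the only configurations in which \thref{p not dividing} or \thref{Theorem 3} does not apply directly to $A$ are precisely those handled either by \thref{a=2} applied to $B$ or by the factorization $y^2+z^{2p^{e-r}}=(y+iz^{p^{e-r}})(y-iz^{p^{e-r}})$.
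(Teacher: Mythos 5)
Your proposal is correct and follows the same overall strategy as the paper: reduce to the subring $A=k[x,y^{p^r},z^{p^r}]\cong B_{(a,s_2,s_3p^{e-r})}$ via \thref{gsubring criterion}, invoke \thref{p not dividing} (when $e=r$) or \thref{Theorem 3} (when $e>r$), and treat the leftover configurations separately, with \thref{a=2} doing the same work in both arguments for the configurations with $a=2$ and an even $s_i$. Your enumeration of the exceptional configurations checks out (in the $e>r$ exceptions with $a=2$ the even parameter is always $s_2$, so the permutation of $y$ and $z$ is only ever needed when $e=r$, where it is harmless for the hypothesis $e\geq r$ of \thref{a=2}). The one genuinely different ingredient is your treatment of the case $s_2=s_3=2$, $a\geq 3$: the paper, for $e=r$, rewrites $B_{(a,2p^r,2p^r)}$ as $k[X,U,V]/(X^a+(UV)^{p^r})$ with $U=Y+iZ$, $V=Y-iZ$ and cites the rigidity of $k[X,Y,Z]/(X^a+Y^bZ^c)$ (\thref{Lemma 3}), whereas you use \thref{Remark 5} to place $y^2+z^{2p^{e-r}}$ in $B^{\phi}$ and then factor it as $(y+iz^{p^{e-r}})(y-iz^{p^{e-r}})$, extracting $y$ and $z^{p^{e-r}}$ by factorial closedness. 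Your route is uniform in $e\geq r$ and, notably, explicitly covers the sub-case $a=2m>2$, $s_2=s_3=2$, $e>r$, where $(a,s_2,s_3p^{e-r})$ is a permutation of an element of $S_3$; the paper's Case II dispatches this configuration only through its literal (non-permutation-invariant) reading of $S_3$ in the statement of \thref{Theorem 3}, so your direct factorization argument is arguably the cleaner and more robust way to close that corner.
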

	\begin{proof}
		Without loss of generality, we assume, $k$ to be an algebraically closed field (cf. \thref{Remark 2}(i)). 
		We consider the subring  $A \coloneq k[x,y^{p^r},z^{p^r}] = B_{(a,s_2, s_3p^{e-r})}$ of $B$. 
		Based on $e,r$ we consider two cases:
		\smallskip
		
		\noindent
		$\boldsymbol{Case \, I: } $ Let $e=r$. Since $(a,s_2p^r,s_3p^r)\in F_3\setminus R_3$, $s_2,s_3\geq 2$. For convenience let $s_2\leq s_3$.\\
		When at most one of $a,s_2, s_3$ is equal to $2$, then  $(a,s_2,s_3)\in F_3\setminus T_3$ and given, $p\nmid as_2s_3$ therefore by \thref{p not dividing}, $A$ is rigid. Hence by \thref{gsubring criterion} $B$ is also rigid.
		If $a=2$ and $s_2 = 2$ then $B = B_{(2,2p^r,s_3p^r)}$ is rigid by \thref{a=2}. Now the remaining case is when $a>2$ and $s_2 = s_3 =2$. 
		Then 
		$$
		B = B_{(a,2p^r,2p^r)}= \dfrac{k[X,Y,Z]}{(X^a +(Y^2 + Z^2)^{p^r})}\\
		\cong \dfrac{k[X,U,V]}{(X^a + (UV)^{p^r})},
		$$
		where $U:= Y+iZ$ and $V:= Y-iZ$, for some $i\in k$ with $i^2 =-1$.
		Since $gcd(a,p^r,p^r) = 1$ and $a,p^r\geq 2$, by \thref{Lemma 3}, $B$ is rigid.
		\smallskip
		
		\noindent
		$\boldsymbol{Case \, II: }$ Let $e>r$ then $s_2\geq 2.$\\
		When $a>2$ or $s_2>2$ and $(a,s_2,s_3p^{e-r})\in F_3\setminus (T_3\cup S_3)$, then $A$ is rigid by \thref{Theorem 3} and hence by \thref{gsubring criterion} $B$ is also rigid.
		Otherwise $a=2$ and $s_2$ is even. 
		Hence by \thref{a=2}, $B$ is rigid.
	\end{proof}
	
	Now Theorem B(iv) follows.
	
	\begin{thm}\thlabel{main}
		$B_{(a,b,c)}$ is rigid when $(a,b,c)\in F_3\setminus(R_3\cup T_3\cup S_3)$.
	\end{thm}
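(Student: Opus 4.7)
The plan is to reduce \thref{main} to the preceding rigidity results \thref{p not dividing}, \thref{Theorem 3}, and \thref{Prop 3} by a case split according to how many of the exponents $a,b,c$ are divisible by the characteristic $p$. The key observation is that the condition $(a,b,c)\in F_3$ forces $p\nmid\gcd(a,b,c)$, so in positive characteristic at most two of $a,b,c$ can be divisible by $p$. Since the Pham-Brieskorn domain is symmetric in its exponents, we are free to permute $a,b,c$ when applying the earlier theorems.

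The cases proceed as follows. First, if $p=0$ or $p\nmid abc$, then \thref{p not dividing} applies directly, because $(a,b,c)\in F_3\setminus T_3$ by hypothesis. Second, if exactly one of $a,b,c$ is divisible by $p$, after permuting we may write $c=sp^e$ with $s,e\in\N$ and $p\nmid abs$; then \thref{Theorem 3} yields rigidity since $(a,b,c)\notin T_3\cup S_3$. Third, if exactly two of $a,b,c$ are divisible by $p$, after permuting we may write $b=s_2p^r$ and $c=s_3p^e$ with $p\nmid as_2s_3$ and (relabeling if necessary) $e\geq r$; then \thref{Prop 3} applies since $(a,b,c)\in F_3\setminus R_3$, and $B_{(a,b,c)}$ is rigid. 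These cases exhaust all possibilities, so the conclusion follows.

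There is no genuine obstacle here; the main task is bookkeeping. One must verify that the hypotheses of each cited theorem are indeed satisfied under the restriction $(a,b,c)\in F_3\setminus(R_3\cup T_3\cup S_3)$. The only subtlety worth noting is that the exclusion of $S_3$ is only actually needed in the second case (where exactly one exponent is divisible by $p$), while the exclusion of $R_3$ is only needed in the third case; in particular one should check that when only $c$ is divisible by $p$ with $p\nmid ab$, the hypothesis $(a,b,c)\notin T_3$ already precludes $(a,b,c)\in R_3$ (since the $R_3$-pair would require two coordinates each having a positive $p$-factor, contradicting $p\nmid ab$, unless some coordinate is $1$). Once this compatibility is confirmed, the theorem follows from a straightforward appeal to the three earlier results.
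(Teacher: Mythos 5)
Your proof is correct and follows exactly the paper's own argument: the paper proves \thref{main} by combining Theorems \thref{p not dividing}, \thref{Theorem 3} and \thref{Prop 3}, which is precisely your case split on how many of $a,b,c$ are divisible by $p$. Your additional compatibility checks are accurate but implicit in the paper's one-line proof.
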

	\begin{proof}
		By combining Theorems \ref{p not dividing}, \ref{Theorem 3} and \ref{Prop 3} the theorem follows.
	\end{proof}
	
	\section{\textbf{Applications}}\label{app}
	As an application of Theorem B and results of section 4, we now prove rigidity of certain surfaces and threefolds.  
	We first consider a threefold which the first author encountered in her paper \cite{G1}.
	
	\begin{prop}\thlabel{prop 4}	
		Let k be a field of characteristic $p>0$. Let
		$$ 
		B =\dfrac{k[X,Y,Z,T]}{(X^mY^n+T^{qp^r} + Z^{p^e})},
		\text{~for some~} m,n,q,r,e\in \N 
		\text{~with~}  p\nmid q
		\text{~and~} e>r\geq 1.
		$$ 
		Let $x,y,z,t$  respectively denote the  images of $X,Y,Z,T$ in $B$. 
		Then the following statements are true:
		\begin{enumerate}[{\rm (i)}]
			
			\item If $p\nmid m$ and $m, q \ge 2$,
			then there does not exist any non-trivial exponential map $\phi$ on $B$ such that $y\in \B$.
			\item If $m,n\geq 2$ and $gcd(m,n,p) =1$, then there does not exist any non-trivial exponential map $\phi$ on $B$ such that $t\in \B$.
			
			\item  If $m,n\geq 2$ and $gcd(m,n,pq) = 1$, then there does not exist any non-trivial exponential map $\phi$ on $B$ such that $z\in \B.$
		\end{enumerate} 
	\end{prop}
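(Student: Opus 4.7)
The three parts follow a common strategy: assume a non-trivial exponential map $\phi$ on $B$ fixing the prescribed element $s$ (equal to $y$, $t$, or $z$ respectively), and deduce a contradiction. By \thref{lemma : properties}(vi), $\phi$ extends to a non-trivial exponential map on $B \otimes_{k[s]} k(s)$, and tensoring further with the algebraic closure $\overline{k(s)}$ (non-triviality being preserved as in \thref{Remark 2}(i)) produces a non-trivial exponential map on the domain $\widetilde{B} := B \otimes_{k[s]} \overline{k(s)}$. The plan is then to identify $\widetilde{B}$ with a surface whose rigidity has already been established, giving the contradiction.

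For (i), set $K = \overline{k(y)}$ and pick $\alpha \in K$ with $\alpha^m = y^n$; the substitution $X \mapsto \alpha^{-1} W$ identifies $\widetilde{B}$ with the Pham-Brieskorn surface $K[W, Z, T]/(W^m + T^{qp^r} + Z^{p^e})$ over $K$. One then verifies that the triple $(m, qp^r, p^e)$ lies in $F_3 \setminus (R_3 \cup T_3 \cup S_3)$: membership in $F_3$ is immediate from $p \nmid m$; $T_3$ is avoided using $m \geq 2$ together with $qp^r \geq 4$ and $p^e \geq 4$; $R_3$ is avoided by the observation that $p^e$ is the only entry among the three which is a pure $p$-power (since $p \nmid m$, and $p \nmid q$ with $q \geq 2$ prevents $qp^r$ from being a pure $p$-power), so any potential $R_3$-pairing must be with $qp^r = q \cdot p^r$, and this is ruled out by the strict inequality $e > r$; $S_3$ is ruled out similarly. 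Then \thref{main} gives rigidity of $\widetilde{B}$, the desired contradiction.

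For (ii), with $K = \overline{k(t)}$, pick $\beta \in K$ satisfying $\beta^{p^e} = -t^{qp^r}$; the characteristic-$p$ Frobenius identity $(Z - \beta)^{p^e} = Z^{p^e} - \beta^{p^e}$ lets the substitution $Z \mapsto Z - \beta$ identify $\widetilde{B}$ with $K[X, Y, Z]/(X^m Y^n + Z^{p^e})$, and since $\gcd(m, n, p) = 1$ forces $\gcd(p^e, m, n) = 1$ with all exponents $\geq 2$, \thref{Lemma 3} yields rigidity. For (iii), with $K = \overline{k(z)}$, the ring $\widetilde{B}$ is already in the form $K[T, X, Y]/(T^{qp^r} + X^m Y^n + F(X))$, where $F(X) := z^{p^e} \in K[X]$ is a constant; the hypothesis $\gcd(m, n, pq) = 1$ delivers $\gcd(qp^r, m, n) = 1$ with all exponents $\geq 2$, so \thref{Cor} applies directly. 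The main technical point, concentrated in part (i), is the verification of membership of $(m, qp^r, p^e)$ in $F_3 \setminus (R_3 \cup T_3 \cup S_3)$ — the strict inequality $e > r$ is exactly what is needed to keep the triple out of $R_3$ despite the presence of two entries divisible by $p$.
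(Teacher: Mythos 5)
Your proposal is correct and follows essentially the same route as the paper: assume the map fixes the given variable, localize and base-change via \thref{lemma : properties}(vi) and \thref{Remark 2}(i), and identify the resulting surface with one already proven rigid. The only (harmless) deviations are that in (i) you normalize $y^nX^m$ to $W^m$ and route through \thref{main}, which requires your (correct) extra verification that $(m,qp^r,p^e)\notin T_3\cup S_3$, whereas the paper invokes \thref{Prop 3} directly, for which $(m,qp^r,p^e)\in F_3\setminus R_3$ already suffices; and in (ii) you perform a Frobenius substitution to reduce to \thref{Lemma 3}, where the paper simply applies \thref{Cor} with the constant $F=t^{qp^r}$.
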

	\begin{proof}
		Let $\phi$ be a non-trivial exponential map on $B$.
		\medskip
		
		\noindent
		$\boldsymbol{(i)}$
		Suppose, if possible, that $y\in \B$. Then, by \thref{lemma : properties}(vi) and \thref{Remark 2}(i), $\phi$ induces a non-trivial exponential map on $\widetilde{B}:=\overline{k(y)}[X,Z,T]/(y^nX^m + T^{qp^r} + Z^{p^e})$.
		Since $e >r$, $q>1$ and $m>1$, we have $(m, qp^r, p^e)\in F_3\setminus R_3$. Hence, 
		by \thref{Prop 3}, $\widetilde{B}$ is rigid, a contradiction.
		\smallskip
		
		\noindent
		$\boldsymbol{(ii)}$
		Suppose, if possible, that $t\in \B$. Then  again by \thref{lemma : properties}(vi), $\phi$ induces a non-trivial exponential map on $\widetilde{B}:=k(t)[X,Y,Z]/(X^mY^n + Z^{p^e}+ t^{qp^r})$.
		Since $m,n,p^e\geq 2$ and $gcd(m,n,p^e) = 1$, by \thref{Cor}, $\widetilde{B}$ is rigid, a contradiction.
		\smallskip
		
		\noindent
		$\boldsymbol{(iii)}$
		Suppose, if possible, that $z\in \B$. Then, as before, 
		we get a non-trivial exponential map on $\widetilde{B}:=k(z)[X,Y,T]/(X^mY^n + T^{qp^r}+ z^{p^e})$. 
		Now $m,n,qp^r\geq 2$ and $gcd(m,n,qp^r) = 1$, hence by \thref{Cor}, $\widetilde{B}$ is rigid, a contradiction.
	\end{proof}
	
	\begin{rem}
		{\em  \thref{prop 4}(i) provides an alternative proof of \cite[Lemma 3.5]{G1} in some cases.}
	\end{rem} 
	
	The next result relates the rigidity of Pham-Brieskorn surfaces with its translates.
	
	\begin{prop}\thlabel{Lemma 5}
		Let $A = k[X,Y,Z]/(X^a + Y^b + Z^c + \lambda)$ for $(a,b,c)\in F_3$ and $\lambda\in k$. 
		Then $A$ is a rigid domain whenever $B_{(a,b,c)}$ is rigid.	
	\end{prop}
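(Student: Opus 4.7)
The plan is to realize $A$ as a filtered deformation of $B_{(a,b,c)}$ and then transfer any non-trivial exponential map back to $B_{(a,b,c)}$ using \thref{Theorem: homogeneization}. The key tool is \thref{graded}, applied with $r=a$ and $h(Y,Z) = Y^b + Z^c + \lambda$. To invoke it, I must verify two primality statements: that $X^a + Y^b + Z^c + \lambda$ is prime in $k[X,Y,Z]$ (so that $A$ is a domain), and that $X^a + \widehat{h}(Y,Z)$ is prime for an appropriate $\pZ$-grading on $k[Y,Z]$. The latter follows immediately from $(a,b,c)\in F_3$ and Eisenstein's criterion mentioned in the introduction; the former, which is essentially the statement that $A$ is a domain, is the only real thing to check.

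To establish that $A$ is a domain, I will place the standard Pham-Brieskorn $\pZ$-grading on $k[X,Y,Z]$ by setting $wt(X) = bc$, $wt(Y) = ac$, $wt(Z) = ab$, so that $X^a$, $Y^b$, $Z^c$ are all homogeneous of degree $abc$ while $\lambda \in k$ sits in degree $0$. If $X^a + Y^b + Z^c + \lambda = fg$ is any factorization in $k[X,Y,Z]$, passing to the highest-degree homogeneous summands gives $\widehat{f}\,\widehat{g} = X^a + Y^b + Z^c$, which is prime in $k[X,Y,Z]$ (since $(a,b,c)\in F_3$). Hence one of $\widehat{f},\widehat{g}$ is a unit, say $\widehat{g} \in k^*$; but since all the chosen weights are strictly positive, $k[X,Y,Z]_0 = k$, so $\widehat{g} \in k^*$ forces $g\in k^*$ itself, and the factorization is trivial. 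Thus $X^a + Y^b + Z^c + \lambda$ is irreducible and $A$ is an integral domain.

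Now put the $\pZ$-graded structure $wt(Y) = c$, $wt(Z) = b$ on $k[Y,Z]$; then the highest-degree homogeneous summand of $h(Y,Z) = Y^b + Z^c + \lambda$ is $\widehat{h}(Y,Z) = Y^b + Z^c$ (both $Y^b$ and $Z^c$ have degree $bc$, whereas $\lambda$ has degree $0$). Both hypotheses of \thref{graded} are satisfied: $X^a + h$ is prime by the previous paragraph, and $X^a + \widehat{h} = X^a + Y^b + Z^c$ is prime since $(a,b,c)\in F_3$. Therefore \thref{graded} produces an admissible $\Z$-filtration on $A$ with $\gr(A) \cong k[X,Y,Z]/(X^a + Y^b + Z^c) = B_{(a,b,c)}$.

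Finally, suppose for contradiction that $\phi$ is a non-trivial exponential map on $A$. By \thref{Theorem: homogeneization}, $\phi$ induces a non-trivial (homogeneous) exponential map $\widehat{\phi}$ on $\gr(A) \cong B_{(a,b,c)}$, contradicting the hypothesis that $B_{(a,b,c)}$ is rigid. Hence no such $\phi$ exists and $A$ is rigid. The main obstacle is really only the verification that $A$ is a domain; once that is in place, the result is a direct instance of the degeneration-plus-homogenization machinery already developed in the paper.
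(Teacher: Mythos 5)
Your proof is correct and takes essentially the same route as the paper: equip $k[Y,Z]$ with positive weights so that $\widehat{h}=Y^b+Z^c$, apply \thref{graded} to get an admissible filtration with $\gr(A)\cong B_{(a,b,c)}$, and conclude via \thref{Theorem: homogeneization}. The only difference is that you explicitly verify the primality of $X^a+Y^b+Z^c+\lambda$ (so that $A$ is a domain and the hypothesis of \thref{graded} holds), a point the paper leaves implicit; your leading-form argument for this is sound.
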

	\begin{proof}
		Let $x,y,z$ be the images of $X,Y$ and $Z$ in $A$ respectively. 
		We define a $\pZ$-graded structure on $k[Y,Z]$, given by $wt(Y) = ac$ and $wt(Z) =ab$.
		Then the highest degree homogeneous summand of $Y^b + Z^c + \lambda$ is $Y^b+Z^c$.
		Now $X^a + Y^b + Z^c$ is a prime element of $k[X,Y,Z]$ because $(a,b,c)\in F_3$.
		Therefore by \thref{graded}, the function $\deg : A\setminus \{0\}\rightarrow \pZ$ defined by 
		$$
		\deg(x) = bc,\, \deg(y) =ac,\, \deg(z) = ab
		$$ 
		admits an admissible $\Z$-filtration on $A$ such that 
		$$
		\gr(A)\cong \dfrac{k[X,Y,Z]}{(X^a + Y^b + Z^c)} = B_{(a,b,c)}.
		$$  
		By \thref{Theorem: homogeneization}, every non-trivial exponential map on  
		$A$ induces a non-trivial exponential map on $gr(A)=B_{(a,b,c)}$, therefore $A$ is rigid whenever $B_{(a,b,c)}$ is rigid. 	
	\end{proof}
	
	As we have seen in  \cite{G1}, the non existence of non-trivial exponential maps turns out to be crucial in settling the triviality or otherwise of important affine domains.
	Below we give a few sufficient conditions for non existence of certain non-trivial exponential maps on certain Pham-Brieskorn threefolds.
	
	\begin{prop}
		Let $k$ be a field of characteristic $p\geq 0$. 
		For $(a,b,c,d)\in F_4\setminus(R_4\cup T_4)$ let $B=k[X,Y,Z,T]/(X^a+Y^b+Z^c+T^d)=B_{(a,b,c,d)}= k[x,y,z,t]$ {\rm (where $x,y,z,t$ respectively denote the images of $X,Y,Z$ and $T$ in $B$)}. 
		Then for the following conditions on $b,c,d$, there does not exist any non-trivial exponential map on $B$ fixing $x$.
		\begin{enumerate}[{\rm (i)}]
			
			\item $(b,c,d)\in F_3\setminus  S_3$.
			
			\item $b=s_2p^m$, $c = s_3p^r$ and $d = s_4p^e$ for some $s_2,s_3,s_4,m,r,e\in \N$ with $p\nmid s_2s_3s_4,\, m\leq r\leq e$ and $(s_2,s_3p^{r-m}, s_4p^{e-m})\in F_3\setminus (T_3\cup S_3).$
		\end{enumerate}		
	\end{prop}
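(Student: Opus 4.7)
The plan in both parts is to suppose, for contradiction, that $\phi$ is a non-trivial exponential map on $B$ with $\phi(x)=x$. Since $k[x]\setminus\{0\}\subseteq B^{\phi}\setminus\{0\}$, \thref{lemma : properties}(vi) produces a non-trivial $k(x)$-algebra exponential map $\widetilde\phi$ on
$$
\widetilde B \coloneq B\otimes_{k[x]}k(x) \cong \frac{k(x)[Y,Z,T]}{\bigl(Y^b+Z^c+T^d+x^a\bigr)}.
$$
By \thref{Remark 2}(i) (using that $X^a+Y^b+Z^c+T^d$ remains irreducible over $\overline k$ because $(a,b,c,d)\in F_4$) we may further assume $k$ is algebraically closed, hence perfect.

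For part (i), any witness for $(b,c,d)\in R_3$ or $T_3$ is automatically a witness for $(a,b,c,d)\in R_4$ or $T_4$, contradicting the standing hypothesis. Combined with the given $(b,c,d)\in F_3\setminus S_3$ this places the triple in $F_3\setminus(R_3\cup T_3\cup S_3)$, so by \thref{main} the surface $B_{(b,c,d)}$ is rigid over $k(x)$; applying \thref{Lemma 5} over $k(x)$ with $\lambda\coloneq x^a$ then forces $\widetilde B$ rigid, contradicting the existence of $\widetilde\phi$.

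For part (ii), set $u\coloneq y^{p^m}$, $v\coloneq z^{p^m}$, $w\coloneq t^{p^m}$ and form the subring
$$
A_1\coloneq k(x)[u,v,w] \cong \frac{k(x)[U,V,W]}{\bigl(U^{s_2}+V^{s_3p^{r-m}}+W^{s_4p^{e-m}}+x^a\bigr)}
$$
of $\widetilde B$. Since $k$ is perfect and the characteristic equals $p$, the $p^m$-th power map carries $\widetilde B[T]$ into $k(x^{p^m})[u,v,w,T^{p^m}]\subseteq A_1[T]$; in particular $\widetilde\phi(u)=\widetilde\phi(y)^{p^m}\in A_1[T]$, and likewise for $v$ and $w$. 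Thus $\widetilde\phi$ restricts to an exponential map $\psi$ on $A_1$, and $\psi$ is non-trivial (otherwise injectivity of Frobenius would force $\widetilde\phi$ to fix $y$, $z$, $t$ and hence to be trivial). A short bookkeeping on which of $s_2$, $s_3p^{r-m}$, $s_4p^{e-m}$ can be a pure $p$-power of positive exponent shows that membership of this triple in $R_3$ would force $(a,b,c,d)\in R_4$; combined with the hypothesis $(s_2,s_3p^{r-m},s_4p^{e-m})\in F_3\setminus(T_3\cup S_3)$, the triple then lies in $F_3\setminus(R_3\cup T_3\cup S_3)$, so \thref{main} makes $B_{(s_2,s_3p^{r-m},s_4p^{e-m})}$ rigid over $k(x)$. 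A further application of \thref{Lemma 5} (with $\lambda=x^a$) yields that $A_1$ is rigid, contradicting the non-triviality of $\psi$.

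The principal obstacle is part (ii): one must (a) execute the Frobenius descent cleanly so that $\widetilde\phi$ genuinely restricts to an exponential map on $A_1$, where the reduction to perfect $k$ is essential because it guarantees $k(x)^{p^m}\subseteq k(x)$ and thereby places $\widetilde\phi(u)$ inside $A_1[T]$; and (b) verify through the exponent bookkeeping that $R_3$-membership of the shifted triple is already ruled out by $(a,b,c,d)\notin R_4$. Part (i) is, by contrast, expected to be a direct assembly of \thref{main} and \thref{Lemma 5}.
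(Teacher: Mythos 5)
The proposal is correct. Part (i) is essentially the paper's own argument: localize at $k[x]$, note that any $R_3$- or $T_3$-witness for $(b,c,d)$ lifts to an $R_4$- or $T_4$-witness for $(a,b,c,d)$, and combine \thref{main} with \thref{Lemma 5} over the base field $k(x)$ with $\lambda=x^a$. In part (ii) you diverge from the paper in how the descent to the $p^m$-th power subring is organized. The paper stays over $k$ and invokes \thref{gsubring criterion} to restrict $\phi$ to $A=k[x,y^{p^m},z^{p^m},t^{p^m}]\cong B_{(a,s_2,s_3p^{r-m},s_4p^{e-m})}$ --- this is where normality of $A$ and the purely inseparable extension argument are needed to control $\phi(x)$ --- and then quotes part (i) for $A$. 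You localize at $k[x]$ first, so the one generator not raised to a $p^m$-th power is absorbed into the base field, which $\widetilde\phi$ fixes; the Frobenius descent to $A_1=k(x)[u,v,w]$ is then immediate (note $k(x)^{p^m}\subseteq k(x)$ holds for any $k$, so the reduction to perfect $k$ is not actually needed for that step), and you finish with \thref{main} and \thref{Lemma 5} directly rather than via part (i). Your route trades the paper's ready-made \thref{gsubring criterion} for an easier ad hoc descent, and it incidentally sidesteps a small imprecision in the paper: the $4$-tuple $(a,s_2,s_3p^{r-m},s_4p^{e-m})$ attached to $A$ need not itself lie in $F_4\setminus(R_4\cup T_4)$ (e.g.\ $a=s_2=2$ is permitted by the hypotheses), so the paper's appeal to part (i) really relies only on what that proof uses, namely that the last three exponents form a triple in $F_3\setminus(R_3\cup T_3\cup S_3)$. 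The one point you leave as an assertion --- that $R_3$-membership of $(s_2,s_3p^{r-m},s_4p^{e-m})$ would force $(a,b,c,d)\in R_4$ --- is true (the only candidate pure $p$-power entries are $s_3p^{r-m}$ with $s_3=1$, $r>m$, or $s_4p^{e-m}$ with $s_4=1$, $e>m$, or an entry equal to $1$, and each case yields a witness among $b,c,d$), and the paper makes the same claim equally without proof.
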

	\begin{proof}
		Suppose, if possible, $\phi$ is a non-trivial exponential map on $B$ with $x\in \B$.
		We arrive at a contraction in each case.
		\smallskip
		
		\noindent
		$\boldsymbol{(i)}$ 
		Then by \thref{lemma : properties}(vi) and \thref{Remark 2}(i), $\phi$ induces a non-trivial exponential map on 
		$$
		\widetilde{B}:= \dfrac{\overline{k(x)}[Y,Z,T]}{(Y^b + Z^c +T^d +x^a)}.
		$$ 
		By the given condition $(b,c,d)\in F_3\setminus (R_3\cup T_3\cup S_3)$, hence by \thref{main}, $B_{(b,c,d)}$ is rigid. Therefore by \thref{Lemma 5}, $\widetilde{B}$ is rigid. 
		This leads to a contradiction.
		\smallskip
		
		\noindent
		$\boldsymbol{(ii)}$  
		Since $(a,b,c,d)\in F_4$, $p\nmid a$. 
		Let 	$A:= k[x, y^{p^m},z^{p^m},t^{p^m}]$ be a subring of $B$. 
		Then 
		$$
		A\cong \dfrac{k[X,Y,Z,T]}{(X^a + Y^{s_2} + Z^{s_3p^{r-m}} + T^{s_4p^{e-m}})}= B_{(a,s_2,s_3p^{r-m}, s_4p^{e-m})}.
		$$ 
		By \thref{gsubring criterion}, $\phi$ induces a non-trivial exponential map  $\phi_1$ on $A$ such that $x\in A\cap \B = A^{\phi_1}$. 
		Since $(a,b,c,d)\in F_4\setminus(R_4\cup T_4)$, $(s_2,s_3p^{r-m}, s_4p^{e-m})\in F_3\setminus (R_3\cup T_3\cup S_3)$.
		By (i) such a $\phi_1$ cannot exist on $A$, a contradiction. 
	\end{proof}
	
	Finally, we deduce a partial result on the rigidity of a certain Pham-Brieskorn surface when $k$ is not algebraically closed. 
	Note that if $k$ is algebraically closed then by Theorem B(ii), this surface is non-rigid.
	
	\begin{prop}
		Let $k$ be a field of characteristic $p\not=2$, not containing a square root of $-1$. 
		We consider
		$B := B_{(2,2,c)}= k[x,y,z]$ for an odd integer $c>1$, then $B$ is rigid.
	\end{prop}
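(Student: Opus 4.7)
The plan is to suppose, for a contradiction, that $\phi$ is a non-trivial exponential map on $B=k[x,y,z]/(x^{2}+y^{2}+z^{c})$ and derive a contradiction by homogenizing, extending scalars to $\overline{k}$, invoking \thref{model lemma}, descending via Galois, and reducing to a curve that excludes exponential maps through \thref{Remark 2}(ii).

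First I would reduce to the case $k$ infinite: the field $k(T)$ also contains no square root of $-1$ (since $\sqrt{-1}$ is algebraic over $k$ and $k$ is algebraically closed in $k(T)$), and any non-trivial exponential map on $B$ induces a non-trivial one on $B\otimes_{k}k(T)$, so rigidity of $B_{(2,2,c)}$ over $k(T)$ forces rigidity over $k$. Assuming $k$ is infinite, endow $B$ with its standard $\pZ$-grading $(\deg x,\deg y,\deg z)=(c,c,2)$ and invoke \thref{Theorem: homogeneization} to produce a non-trivial homogeneous exponential map $\widehat{\phi}$ on $B$. Set $\varphi:=\widehat{\phi}\otimes_{k}\mathrm{id}$; this is a non-trivial homogeneous exponential map on $B\otimes_{k}\overline{k}$ commuting with the natural action of $\operatorname{Gal}(\overline{k}/k)$.

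Reindexing $B$ as $B_{(c,2,2)}$ via the permutation symmetry of Pham–Brieskorn rings, the hypotheses of \thref{Lemma 2} and \thref{model lemma} are met over $\overline{k}$: $(c,2,2)\in F_{3}\setminus R_{3}$ (using $p\neq 2$) and $\gcd(c,4)=1$. Hence $x,y,z\notin(B\otimes\overline{k})^{\varphi}$, and there exists $\mu\in\overline{k}^{*}$ with $x+\mu y\in(B\otimes\overline{k})^{\varphi}$ (matching the lemma's $y,z$ to our $x,y$). Such $\mu$ is uniquely determined: if $x+\mu_{1}y$ and $x+\mu_{2}y$ both lay in $(B\otimes\overline{k})^{\varphi}$ with $\mu_{1}\neq\mu_{2}$, their difference would force $y\in(B\otimes\overline{k})^{\varphi}$, contradicting \thref{Lemma 2}. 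Galois-equivariance of $\varphi$ then gives $\sigma(x+\mu y)=x+\sigma(\mu)y\in(B\otimes\overline{k})^{\varphi}$ for every $\sigma$, so uniqueness forces $\sigma(\mu)=\mu$ and thus $\mu\in k^{*}$, yielding $x+\mu y\in B^{\widehat{\phi}}$.

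For each $\lambda\in k$, substituting $x=\lambda-\mu y$ into the defining relation gives
\[
B/(x+\mu y-\lambda)\;\cong\;k[y,z]\big/\bigl((1+\mu^{2})y^{2}-2\mu\lambda y+\lambda^{2}+z^{c}\bigr).
\]
Since $-1$ is not a square in $k$, $1+\mu^{2}\neq 0$, so the discriminant of this quadratic in $y$ equals $-4\bigl(\lambda^{2}+(1+\mu^{2})z^{c}\bigr)$, a polynomial of odd degree $c$ in $z$, which cannot be a square in $k(z)$; hence the quotient is an integral domain and $x+\mu y-\lambda$ is prime in $B$ for every $\lambda\in k$. By \thref{betta-lemma}, $\widehat{\phi}$ induces a non-trivial exponential map on $\widetilde{B}:=B/(x+\mu y-\beta)$ for some $\beta\in k$. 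Completing the square in $y$ over $\overline{k}$ yields
\[
\widetilde{B}\otimes_{k}\overline{k}\;\cong\;\overline{k}[w,z]/(w^{2}+z^{c}+\gamma)
\]
for some $\gamma\in\overline{k}$; this is an integral domain whose associated affine curve is either a non-normal cuspidal curve (when $\beta=0$, so $\gamma=0$) or a smooth hyperelliptic curve of genus $(c-1)/2\geq 1$ (when $\beta\neq 0$, using that $c\geq 3$ is odd). In either case $\widetilde{B}\otimes\overline{k}\neq\overline{k}^{[1]}$, so $\widetilde{B}\neq k^{[1]}$, and \thref{Remark 2}(ii) yields that $\widetilde{B}$ admits no non-trivial exponential map, the desired contradiction.

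The main obstacle is the Galois descent of the invariant $x+\mu y$ from $\overline{k}$ to $k$: it depends both on the uniqueness of $\mu$ (itself a consequence of \thref{Lemma 2}) and on the Galois equivariance of $\varphi$; the subsequent reduction to a one-variable slice and invocation of \thref{Remark 2}(ii) is then routine.
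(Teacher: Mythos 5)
Your strategy is sound and, up to the point of producing an invariant of the form $x+\mu y$ over $\overline{k}$, runs parallel to the paper's; the two arguments then diverge in the endgame. The paper stays over $\overline{k}$ throughout, shows by slicing that the only case not immediately contradictory is $\mu^2=-1$, and derives its final contradiction from the fact that $\widehat{f}=\lambda_1(x+\mu y)^r$ is an element of $B$ itself, which forces $\sqrt{-1}\in k$. You instead descend $\mu$ to $k$ first, use $1+\mu^2\neq 0$ to dispose of all cases uniformly, and finish by slicing over $k$; your preliminary base change to $k(T)$ correctly supplies the infiniteness needed to apply \thref{betta-lemma} over $k$, and your discriminant computation establishing primality of $x+\mu y-\lambda$ is correct.

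There is, however, a genuine flaw in the descent step: from $\sigma(\mu)=\mu$ for all $\sigma\in\operatorname{Gal}(\overline{k}/k)$ you conclude $\mu\in k^*$, but the fixed field of $\operatorname{Aut}(\overline{k}/k)$ is the purely inseparable closure of $k$, not $k$ itself, so this implication fails for imperfect $k$ (for instance $k=\mathbb{F}_p(t)$ with $p\equiv 3\pmod 4$, which satisfies the hypotheses of the proposition). The conclusion $\mu\in k$ is nevertheless true and can be obtained without Galois theory: since $\varphi=\widehat{\phi}\otimes\mathrm{id}$, writing elements against a $k$-basis of $\overline{k}$ gives $(B\otimes_k\overline{k})^{\varphi}=B^{\widehat{\phi}}\otimes_k\overline{k}$, and expanding $x+\mu y$ in that basis shows, using $y\notin B^{\widehat{\phi}}$ from \thref{Lemma 2}, that $\mu\in k$ (this identity is also what you need to pass from $x+\mu y\in(B\otimes\overline{k})^{\varphi}$ to $x+\mu y\in B^{\widehat{\phi}}$). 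Alternatively, the weaker correct conclusion that $\mu$ is purely inseparable over $k$ already suffices: $\mu^2=-1$ would give $(\mu^{p^n})^2=-1$ with $\mu^{p^n}\in k$, so $1+\mu^2\neq0$ still follows, and you could then run the slicing over $\overline{k}$ as the paper does. A second, minor, slip: when $p\mid c$ the curve $w^2+z^c+\gamma$ with $\gamma\neq0$ is not smooth (it becomes $w^2+g(z)^{p^e}$ after extracting $p$-th powers and is singular, hence non-normal), so in that case you must argue via non-normality rather than genus; the conclusion $\widetilde{B}\otimes\overline{k}\neq\overline{k}^{[1]}$ still holds.
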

	\begin{proof} 
		Suppose, if possible, that there exists a non-trivial exponential map $\phi$ on $B$ and let $f\in B^{\phi}\setminus k$. 
		Let $\phi_1 = \phi\otimes_k id$ be the extension of $\phi$ on $ B_1 := k[x,y,z]\otimes_k\overline{k}\cong \overline{k}[x,y,z]$ then $f\in B_1^{\phi_1}\setminus \overline{k}$. 
		Since $c$ is odd, as in the the proof of \thref{model lemma}, we get $\widehat{f} = \lambda_1(x+\mu y)^r\in B_1^{\widehat{\phi_1}}\subseteq B_1$ for some $\lambda_1\in k^{*}$, $\mu\in \overline{k}^{*}$ and $r\in \N$. Therefore  $x+\mu y\in B_1^{\widehat\phi_1}$ (cf. \thref{lemma : properties}(i)). 
		Now for every $\lambda$ in $\overline{k}^{*}$ 
		$$
		\frac{B_1}{(x+\mu y - \lambda)B_1} = \frac{\overline{k}[X,Y,Z]}{(X^2 + Y^2 + Z^c, X+\mu Y -\lambda )}\cong \frac{\overline{k}[Y,Z]}{(Z^c + P_{\lambda}(Y))},
		$$
		
		\noindent
		where $P_{\lambda}(Y) := (1 + \mu^2)Y^2 - 2\lambda\mu Y + \lambda^2\in \overline{k}[Y]$. 		
		Thus $P_{\lambda}(Y)\notin \overline{k}$. Hence $x + \mu y- \lambda$ is a prime element of $B_1$ for all $\lambda\in \overline{k}^{*}.$ 
		Therefore,  by \thref{betta-lemma} ,there exists  $\beta\in \overline{k}^*$ such that $\widehat{\phi_1}$ induces a non-trivial exponential map on the integral domain $\widetilde{B}_1:= B_1/(x+\mu y-\beta)B_1\cong {\overline{k}[Y,Z]}/{(Z^c + P_{\beta}(Y))}.$ 
		Since $\td_k\widetilde{B}_1$ = 1, so by \thref{lemma : properties}(v),  $\widetilde{B}_1=\overline{k}^{[1]}$. Suppose, if possible, $\mu^2 +1 \not= 0$. Then we would have
		$$
		\begin{array}{lll}	
			Z^c + P_{\beta}(Y)&
			=& 
			Z^c + (1 + \mu^2)Y^2 - 2\beta\mu Y + \beta^2\\
			&
			=&
			Z^c + (\mu^2 +1)(Y-\lambda_1)^2 + \lambda_2,
		\end{array}
		$$
		where $\lambda_1 := \mu\beta/(\mu^2 +1)$ and $\lambda_2 :={\beta^2(2\mu +1)}/{(\mu^2 +1)^2}$.
		So $Z^c + P_{\beta}(Y)$ would have no linear term in the new co-ordinates $Z$ and $Y_1:= Y-\lambda_1$.  
		Hence $\widetilde{B}_1\neq \overline{k}^{[1]}$, a contradiction. Therefore $\mu^2 +1 = 0$, say $\mu = i$, a square root of $-1$ in $\overline{k}$. 
		Note that $\widehat{\phi_1}$ is the extension of $\widehat{\phi}$ on $B_1$. Therefore $\widehat{f} = \lambda_1(x+iy)^r\in B^{\widehat{\phi}}\subseteq B$ for $\lambda_1\in k^{*}$ and $r>0$. 
		So $i\in k$, a contradiction. 
		Therefore $B_{(2,2,c)}$ is rigid for every odd integer $c>1$. 
	\end{proof}	
	
	\bigskip
	
	\noindent
	{\bf Acknowledgements.}
	The authors express their utmost gratitude to Professor Amartya K. Dutta and Parnashree Ghosh for going through the draft and giving valuable suggestions.

	{\small{
					
		\end{document}